\documentclass[reqno,10pt]{amsart}
\usepackage[left=1in,right=1in,top=.95in,bottom=.95in]{geometry}
\usepackage{amsmath,amssymb,amsthm,mathrsfs,mathtools}
\usepackage{esint}
\usepackage{microtype}
\usepackage{xcolor}
\usepackage[colorlinks=true,linkcolor=blue,citecolor=red,urlcolor=blue]{hyperref}
\usepackage{enumitem}
\numberwithin{equation}{section}
\allowdisplaybreaks[2]
\numberwithin{equation}{section}

\theoremstyle{plain}
\newtheorem{theorem}{Theorem}[section]
\newtheorem{proposition}[theorem]{Proposition}
\newtheorem{lemma}[theorem]{Lemma}
\newtheorem{corollary}[theorem]{Corollary}

\theoremstyle{remark}
\newtheorem{remark}[theorem]{Remark}

\newcommand{\R}{\mathbb{R}}
\newcommand{\Sn}{\mathbb{S}^{n}}
\newcommand{\dd}{\,\mathrm{d}}
\newcommand{\cN}{c_n}

\title[The constant $Q/R$-curvature equation]
{Liouville Theorem for the Constant
$Q/R$-Curvature Equation}

\author[H. Lu]{Han Lu}

\address{School of Mathematics and Statistics, Henan University, Kaifeng 475004, China}
\email{hlu@henu.edu.cn}

\subjclass[2020]{Primary 35J91, 53C21; Secondary 35B53, 35J30}

\keywords{$Q$-curvature; scalar curvature; Liouville theorem; moving planes; cooperative elliptic system}

\begin{document}

\begin{abstract}
We study positive entire solutions of the flat constant $Q/R$-curvature
equation in dimension $n\geq5$.  We prove that the weak superharmonicity
condition $-\Delta u\geq0$ already forces every nonconstant solution to be admissible.   With this automatic admissibility, we classify all such
solutions as positive constants or standard bubbles.  We also show
that the weak superharmonicity condition is
structurally necessary.  Once it is removed, there is a one-parameter family
of positive radial entire solutions near every positive constant.  These
solutions converge to a positive constant at infinity, while
both $-\Delta u$ and $-\Delta(u^{(n-2)/(n-4)})$ change sign infinitely many times.
\end{abstract}

\maketitle

\section{Introduction}

The Yamabe problem is the basic second-order model for conformal curvature
equations, while the Paneitz operator and Branson's $Q$-curvature provide a
fourth-order analogue.  Ge, Wang, and Wei recently introduced a Yamabe-type
problem for the quotient of $Q$-curvature by scalar curvature and singled out
the classification of positive entire solutions of its flat equation as an
open Euclidean rigidity problem \cite{GeWangWei26}.  We study this flat
model, proving classification under weak superharmonicity. We also show that,
without a sign condition, positive radial scattering solutions emerge from
the constant solutions.

We consider the normalization for which the quotient agrees with that of the
unit round sphere.  Throughout the paper, $n\geq5$ and
\begin{equation*}
 p=\frac{n-2}{n-4},\qquad
 q=\frac{n}{n-4}=2p-1,
\end{equation*}
and
\begin{equation*}
 \cN=\frac{(n+2)(n-4)}4.
\end{equation*}
Our flat equation is
\begin{equation}
 \Delta^2u=\cN u^{p-1}\bigl[-\Delta(u^p)\bigr]
 \qquad\hbox{in }\R^n,\qquad u>0.
 \label{eq:main}
\end{equation}

To see its geometric meaning, let
\begin{equation*}
 g=u^{\frac4{n-4}}|dx|^2=(u^p)^{\frac4{n-2}}|dx|^2
\end{equation*}
and set
\begin{equation*}
 w=-\Delta(u^p).
\end{equation*}
The conformal transformation laws for scalar curvature and $Q$-curvature give
\begin{align*}
 R_g=\frac{4(n-1)}{n-2}
 u^{-\frac{n+2}{n-4}}w,\qquad
 Q_g=\frac2{n-4}u^{-\frac{n+4}{n-4}}\Delta^2u
 =\frac{2\cN}{n-4}u^{-\frac{n+2}{n-4}}w.
\end{align*}
Thus, wherever $R_g\ne0$, \eqref{eq:main} is equivalent to
\begin{equation}
 \frac{Q_g}{R_g}=\frac{n^2-4}{8(n-1)}.
 \label{eq:Q-equals-kappa-R}
\end{equation}
For convenience, throughout this paper we refer to $w\geq0$ as \emph{weak admissibility}, and to
$w>0$ as \emph{strict admissibility}.  The strict condition is exactly
\(R_g>0\).

The standard solutions are the conformal factors obtained from the round
sphere by stereographic projection and M\"obius transformations:
\begin{equation}
 U_{\xi,\lambda}(x)
 =\left(\frac{2\lambda}{\lambda^2+|x-\xi|^2}\right)^{\frac{n-4}{2}},
 \qquad \xi\in\R^n,\quad \lambda>0.
 \label{eq:bubble}
\end{equation}
Every positive constant also solves the cross-multiplied equation
\eqref{eq:main}, although both $R_g$ and $Q_g$ then vanish and the literal
quotient in \eqref{eq:Q-equals-kappa-R} is undefined.

For this $Q/R$ Yamabe-type problem, Ge--Wang--Wei proved a
sharp quotient Sobolev inequality on the round sphere, an Obata-type rigidity
theorem on compact Einstein manifolds, and existence results on closed
manifolds under natural curvature-positivity assumptions \cite{GeWangWei26}.
Related recent refinements of the Obata--V\'etois rigidity method may be found
in \cite{Vetois24,Case24}.  Li--Wang--Wei subsequently analyzed blow-up
for the quotient equation and constructed noncompact families in dimensions
$n\geq25$ \cite{LiWangWei26}. 

Our first result gives the classification under a condition weaker than
nonnegative scalar curvature.

\begin{theorem}
\label{thm:weak-superharmonic}
Let $u\in C^4(\R^n)$ be a positive solution of \eqref{eq:main} and assume
\begin{equation*}
 -\Delta u\geq0\qquad\hbox{in }\R^n.
\end{equation*}
Then either $u$ is a positive constant or $u=U_{\xi,\lambda}$ for some
$\xi\in\R^n$ and $\lambda>0$.  More precisely, every nonconstant solution
satisfies
\begin{equation}
 -\Delta u\geq K u^q+(p-1)\frac{|\nabla u|^2}{u},
 \qquad K=\frac{\cN p}{q},
 \label{eq:weak-gradient-upgrade-stated}
\end{equation}
and consequently
\begin{equation*}
 -\Delta(u^p)\geq pK u^{p+q-1}>0.
\end{equation*}
\end{theorem}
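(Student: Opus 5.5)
The plan is to prove the pointwise inequality \eqref{eq:weak-gradient-upgrade-stated} first, by a maximum-principle argument on an auxiliary function, and then to classify using strict admissibility and moving planes. Set $v=-\Delta u\geq0$. Then \eqref{eq:main} reads $-\Delta v=\cN u^{p-1}w$, and a direct computation gives
\begin{equation*}
 w=-\Delta(u^p)=p u^{p-1}\Bigl(v-(p-1)\frac{|\nabla u|^2}{u}\Bigr).
\end{equation*}
So the second assertion follows immediately from the first, since $p u^{p-1}\cdot Ku^q=pKu^{p+q-1}$. The core object is the $P$-function
\begin{equation*}
 P=v-Ku^q-(p-1)\frac{|\nabla u|^2}{u},
\end{equation*}
and the goal is to show $P\geq0$ whenever $u$ is nonconstant.

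\emph{Step 1: a differential inequality for $P$.} Compute $\Delta P$ using $-\Delta v=\cN u^{p-1}w$ together with the Bochner formula for $|\nabla u|^2$. Substitute $v=P+Ku^q+(p-1)|\nabla u|^2/u$ wherever $v$ appears. Refine Bochner's inequality by splitting $\nabla^2u$ into its $\nabla u$ direction and trace-free part (the usual Obata/V\'etois-type manipulation). The aim is an inequality of the form
\begin{equation*}
 -\Delta P+b\cdot\nabla P+cP\geq0 \qquad\text{on }\{P<0\},
\end{equation*}
with $b$ built from $\nabla u/u$ and with $c\geq0$ there. I expect the specific constant $K=\cN p/q$ and the exponent $p-1$ to be exactly what makes the leftover terms combine into a square or a nonnegative multiple of $P$. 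This is where $q=2p-1$ and the value of $\cN$ get used.

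\emph{Step 2: behaviour at infinity.} A positive superharmonic function on $\R^n$ with $n\geq3$ has $\inf u=\lim_{r\to\infty}\bar u(r)$, where $\bar u$ is the spherical mean. I would show that a nonconstant solution must have $\inf u=0$. If $\inf u=m>0$, then integrating $-\Delta v=\cN u^{p-1}w$ over balls, with $v\geq0$ and the radial ODE for spherical means, should force $v\equiv0$, hence $u$ harmonic and therefore constant. With $u\to0$ along spherical means and $u\gtrsim|x|^{4-n}$ from the bi-superharmonic structure, I would show $\liminf_{|x|\to\infty}P\geq0$ in an averaged sense. Then apply the maximum principle from Step 1, using an Omori--Yau-type sequence or a test function $P+\varepsilon|x|^{2-n}$ to handle non-attained infima, to conclude $P\geq0$.

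\emph{Step 3: classification.} Once $w>0$, the pair $(u^p,w)$ solves the cooperative system $-\Delta(u^p)=w$, $-\Delta w=\ldots$ with positive right-hand sides (rewritten via \eqref{eq:main}). I would run the method of moving planes, or moving spheres after a Kelvin transform, which is legitimate because $v,w>0$ provide the superharmonicity needed for comparison. This yields radial symmetry about some $\xi$. The radial ODE then identifies $u=U_{\xi,\lambda}$.

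The main obstacle is Step 2: controlling $P$ at infinity with no a priori decay of $\nabla u$ or $v$. I would try first to derive sharp asymptotics $u\sim c|x|^{4-n}$ and gradient bounds from the Riesz representation of $v$ and of $u$.
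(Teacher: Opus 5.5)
Your reduction to $P\ge0$ is the right target, but Step~1 is only an expectation, and the natural computation does not deliver it at the target coefficient. Write $z=-\Delta u$ for your $v$, and set $w_b=\Delta u+b|\nabla u|^2/u+Ku^q$, so that your $P$ is $-w_{p-1}$. The drift is dictated by cancelling $\nabla u\cdot\nabla\Delta u$, namely $L_b=\Delta-\frac{2b}{u}\nabla u\cdot\nabla$. The usable control of the Hessian is $|A|^2\ge(\operatorname{tr}A)^2/n$ for $A=D^2u-(1+b)\frac{du\otimes du}{u}$, where $\operatorname{tr}A=w_b-(1+2b)\frac{|\nabla u|^2}{u}-Ku^q$ contains $w_b$ itself.

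Doing this at $b=p-1$ with no prior information gives $u\,L_bw_b\ge\frac{2b}{n}(w_b^2+K^2u^{2q})-b\,w_b\bigl((1+2b)\frac{|\nabla u|^2}{u}+\frac4nKu^q\bigr)$. This is negative where $Ku^q\le w_b\ll|\nabla u|^2/u$. With the weight $u^{-b}$ the $w_b|\nabla u|^2$ terms cancel, but you are left with $u^{1+b}\Delta(u^{-b}w_b)\ge\frac bn\bigl((n-2)w_b+2Ku^q\bigr)\bigl(Ku^q-w_b\bigr)$, negative wherever $w_b>Ku^q$. Nothing in the hypotheses bounds $w_b$ on its positive set, so you do not get $-\Delta P+(\text{drift})\cdot\nabla P+cP\ge0$ with $c\ge0$ on $\{P<0\}$.

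The paper needs two ingredients you lack. The first is the base case $b=0$, i.e.\ $T=z-Ku^q>0$, proved separately. The second is a coefficient bootstrap: on $\{w_b>0\}$ the previous bound $w_c\le0$ caps $w_b\le(b-c)|\nabla u|^2/v$ (with $v=u+\varepsilon$) and gives $\operatorname{tr}A\le-(1+b+c)|\nabla u|^2/v-Ku^q<0$. The quartic coefficient is then $\frac{2b}{n}(1+b+c)^2-b^2(1+2b)$. It vanishes at $b=c=p-1$ (since $\frac2n(1+2\alpha)=\alpha$ for $\alpha=p-1$) and is negative for $c=0$, $b=p-1$. Hence the paper iterates $\frac2n(1+b_{j+1}+b_j)^2=b_{j+1}(1+2b_{j+1})$ with $b_j\uparrow p-1$.

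Step~2 uses sign information that is not available. Since $w$ has no sign, integrating $-\Delta z=\cN u^{p-1}w$ over balls gives nothing when $\inf u>0$. Likewise, $u\gtrsim|x|^{4-n}$ ``from the bi-superharmonic structure'' presupposes $-\Delta z\ge0$, i.e.\ $w\ge0$, which is the conclusion. The missing idea is that $T=z-Ku^q$ satisfies $-\Delta T=\cN p(p-1)u^{q-2}|\nabla u|^2\ge0$ unconditionally. The paper then argues as follows:
\begin{itemize}
\item $T^-$ is subharmonic with $0\le T^-\le Ku^q$.
\item The sub-mean-value inequality with exponent $1/q<1$ and $\fint_{B_R}u\le u(0)$ give $M=\sup T^-<\infty$.
\item The Riesz decomposition of $M-T^-$, together with the spherical means of $u$, gives $M\le K(\inf u)^q$, hence $z\ge K(u^q-(\inf u)^q)$.
\item Barta's inequality forces $\inf u=0$, whence $T\ge0$ and, by the strong minimum principle, $T>0$.
\end{itemize}

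Nor is $P$ ever controlled pointwise at infinity. An averaged $\liminf$ does not feed an Omori--Yau or barrier argument, least of all with a drift $\nabla u/u$ you cannot bound. Instead, $(v^{-b}w_b)_+$ is globally subharmonic by Kato and is at most $C_\varepsilon|\nabla u|^2$ by the previous iterate. Also $\int_{B_R}|\nabla u|=O(R^{(n+2)/2})=o(R^n)$, from $z\ge Ku^q$ via the spherical-mean ODE and a near/far splitting of $u=I_2z$. The $L^{1/2}$ sub-mean-value inequality then kills $(v^{-b}w_b)_+$.

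Step~3 also has a hole. The equation prescribes $\Delta^2u$, not $\Delta^2(u^p)$, so there is no usable equation for $-\Delta w$. The natural system for $(u,z)$ carries $-(p-1)|\nabla u|^2/u$ with the wrong sign. The paper instead uses:
\begin{itemize}
\item the gradient-free cooperative system for the Kelvin transform $U$ and $V=Z-\frac K2U^q$, positive thanks to the potential inequality $Z\ge KU^q$;
\item punctured moving planes allowing a B\^ocher term;
\item a two-center argument for decay;
\item the Ge--Wang--Wei rigidity theorem.
\end{itemize}
The radial ODE alone does not single out the bubble, since it has positive non-bubble entire solutions (Section~8).
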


Since $w\geq0$ implies
$-\Delta u\geq (p-1)|\nabla u|^2/u\geq0$, Theorem~\ref{thm:weak-superharmonic}
contains the following classification theorem which assumes nonnegative scalar curvature.  We nevertheless state this result separately because its proof supplies the geometric core of
the argument.

\begin{theorem}
\label{thm:main}
Let $u\in C^4(\R^n)$ be positive and satisfy \eqref{eq:main} and
$w=-\Delta(u^p)\geq0$.  Then either $u$ is a positive constant or
$u=U_{\xi,\lambda}$ for some $\xi\in\R^n$ and $\lambda>0$. 
\end{theorem}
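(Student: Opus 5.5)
Here $w=-\Delta(u^p)\ge0$ and $\Delta^2u=\cN u^{p-1}w\ge0$.

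\emph{Step 1: reformulate as a cooperative system.} Set $v=-\Delta u$. Expanding $-\Delta(u^p)=p\,u^{p-1}(-\Delta u)-p(p-1)u^{p-2}|\nabla u|^2$, the condition $w\ge0$ gives
\begin{equation*}
 v\ge (p-1)\frac{|\nabla u|^2}{u}\ge0 .
\end{equation*}
The equation then becomes the system $-\Delta u=v$, $-\Delta v=\cN u^{p-1}w$.

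\emph{Step 2: integral representation.} Since $-\Delta v\ge0$ and $v\ge0$, the standard argument for superharmonic entire functions (Liouville plus spherical averages) should give
\begin{equation*}
 v=c_1+\int \Gamma(x-y)\,\cN u^{p-1}w(y)\dd y,
 \qquad
 u=c_2+\int\Gamma(x-y)\,v(y)\dd y,
\end{equation*}
where $\Gamma$ is the Newtonian kernel and $c_1,c_2\ge0$ are constants. If $c_1>0$, then $u$ would satisfy $-\Delta u\ge c_1$ with $u>0$, which is impossible on $\R^n$ (compare spherical means: $\bar u(r)\le u(0)-c_1r^2/(2n)$). Hence $c_1=0$.

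\emph{Step 3: nonconstant case, $c_2=0$.} Suppose $u$ is nonconstant. If $v\equiv0$, then $u$ is a positive harmonic function, hence constant; so $v\not\equiv0$. I would then show $c_2=0$ by a decay argument on spherical averages. From $\Delta^2u\ge \cN p\,u^{p-1}\cdot u^{p-1}v$-type lower bounds one obtains growth inequalities for $\bar u,\bar v$. A positive limit $c_2$ forces $-\Delta v\gtrsim c_2^{2p-2}v$ at infinity, because $w\approx p\,c_2^{p-1}v$ up to a gradient term that decays. This contradicts the slow decay $v\gtrsim|x|^{2-n}$ of a nontrivial potential. The gradient term $|\nabla u|^2/u$ has to be controlled via the representation. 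I expect this step to be the main obstacle, since the nonlinearity involves $|\nabla u|^2$ and is not monotone in $u$ alone.

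\emph{Step 4: moving spheres.} With $c_1=c_2=0$, $u$ and $v$ are pure potentials and decay like $|x|^{2-n}$ and $|x|^{4-n}$ respectively. I would apply the method of moving spheres (Kelvin transform of weight $4-n$ for $u$, with the corresponding weight for $v$) to the integral system. The right-hand side $\cN u^{p-1}w$ equals
\begin{equation*}
 \cN\bigl(p\,u^{2p-2}v-p(p-1)u^{2p-3}|\nabla u|^2\bigr).
\end{equation*}
It is conformally covariant: it transforms exactly like the Kelvin transform, since the geometric equation is invariant and $q=2p-1$ is critical. Comparison of $u$ with $u_{x,\lambda}$ works because $w$ is the scalar curvature term $u^{(n+2)/(n-4)}R_g$, which is positive in the maximum-principle sense needed.

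An alternative I would try first is the Obata--V\'etois integral identity of Ge--Wang--Wei, applied in the flat setting with cutoffs and using the decay from Step 3. It would show that the traceless Hessian of $u^{-2/(n-4)}$ vanishes. That yields that $u$ is exactly $U_{\xi,\lambda}$ after fixing the normalization $Q/R=(n^2-4)/(8(n-1))$. In either route the conclusion is that $u$ is radially symmetric about every point where spheres stop, hence $u=U_{\xi,\lambda}$.
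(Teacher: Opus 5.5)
There is a genuine gap in Step~4, and it is the heart of the problem. In your variables the right-hand side of the $v$-equation is
\[
 \cN u^{p-1}w=\cN p\,u^{q-1}v-\cN p(p-1)\,u^{q-2}|\nabla u|^2 .
\]
This is not a function of $(u,v)$, and it contains the gradient with a negative sign. Any moving-plane or moving-sphere comparison must control the difference of right-hand sides on the bad set. Here that difference contains a term in $\nabla(u-u_{x,\lambda})$ feeding into the $v$-component, and knowing $u\le u_{x,\lambda}$, $v\le v_{x,\lambda}$ says nothing about it. So the system is not cooperative, and positivity of $w$ does not substitute for monotonicity. Also, $v=-\Delta u$ is not Kelvin covariant for the weight $4-n$: for $u\equiv1$ the transform $|x|^{4-n}$ has $-\Delta|x|^{4-n}=2(n-4)|x|^{2-n}\ne0$. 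So there is no ``corresponding weight'' for $v$.

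The missing idea is the coercive bound $z:=-\Delta u\ge Ku^q$ with $K=\cN p/q$. The paper proves it from the nonlinear potential inequality $P^\beta\le\beta I_2(P^{\beta-1}G)$ for $P=I_2G$, applied to $P=u^p=I_2w$ with $\beta=q/p$. This gives $u^q\le\frac qp I_2(u^{p-1}w)=z/K$. Now take $A=K/2$ and $V=z-Au^q\ge Au^q>0$. The gradient terms cancel exactly because $Aq(q-1)=\cN p(p-1)$, and one obtains the positive cooperative system
\[
 -\Delta u=V+Au^q,\qquad -\Delta V=qAu^{q-1}(V+Au^q).
\]
The paper runs its moving-plane argument on this system, after a Kelvin transform.

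The second gap is the behavior at infinity. After $c_1=c_2=0$ you only know that $u$ and $v$ are Newton potentials, so their spherical means tend to $0$. This does not give the pointwise rates you assert, which are also interchanged: for a bubble $u\sim|x|^{4-n}$ and $-\Delta u\sim|x|^{2-n}$. The paper does not have $u=O(|x|^{4-n})$ a priori. The Kelvin transform may have a nonremovable puncture, and $V$ may carry a B\^ocher term $d\Phi$. The punctured moving-plane theorem therefore only gives the dichotomy ``puncture removable, or $(U,V)$ radial about the center''.

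Applying this at two distinct centers excludes the radial alternative at both, since radial symmetry about two points forces $\nabla u\equiv0$. Hence one puncture is removable, and that is what yields $u=O(|x|^{4-n})$. Only then do stereographic compactification and the Ge--Wang--Wei Obata theorem identify the bubble. Your Obata route with cutoffs presupposes exactly this decay, so it does not bypass the difficulty.

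Finally, Step~3 as written needs control of $|\nabla u|^2/u$ that you do not have. The paper avoids this by comparing potentials, with $\ell=c_2$:
\[
 z=\cN I_2(u^{p-1}w)\ge\cN\ell^{p-1}I_2w=\cN\ell^{p-1}(u^p-\ell^p)\ge\cN p\,\ell^{2p-2}(u-\ell).
\]
Thus $h=u-\ell>0$ satisfies $-\Delta h\ge\kappa h$ with $\kappa>0$, which Barta's inequality on large balls rules out.
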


The weak-superharmonicity hypothesis in
Theorem~\ref{thm:weak-superharmonic} cannot be dropped.
The following result constructs a family of positive entire solutions near the
constant background.  It also explains why the equation $Q_g=CR_g$ and
the quotient equation $Q_g/R_g=C$ must be distinguished when the scalar
curvature is allowed to vanish or change sign.

\begin{theorem}
\label{thm:nonadm-scattering}
For every $n\geq5$ there exists $\varepsilon_n>0$ such that, for every
$0<|s|<\varepsilon_n$, equation \eqref{eq:main} has a positive smooth radial
solution $u_s$ satisfying
\begin{equation*}
 u_s(0)=1,\qquad u_s'(0)=0,\qquad u_s''(0)=-s.
\end{equation*}
There exists $\ell_s>0$ such that $u_s(r)\to\ell_s$ as $r\to\infty$.
Moreover, if
\begin{equation*}
 k_s^2=\cN p\,\ell_s^{2(p-1)},
\end{equation*}
then there are constants $A_s,B_s$, not both zero, and a function $L_s$ such
that
\begin{align*}
 L_s(r)-\ell_s&=O(r^{3-n}),\\
 u_s(r)&=L_s(r)+r^{-\frac{n-1}{2}}
 \{A_s\cos(k_sr)+B_s\sin(k_sr)+o(1)\}.
\end{align*}
Both $-\Delta u_s$ and $-\Delta(u_s^p)$ change sign infinitely many times.
In particular, $u_s$ is neither a standard bubble nor weakly superharmonic.
\end{theorem}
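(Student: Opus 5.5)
We construct $u_s$ as a radial perturbation of the constant $1$ and read off its behaviour at infinity from an asymptotically constant-coefficient linear equation. Write $v=u^p$. Then \eqref{eq:main} becomes the second-order system
\begin{equation*}
 -\Delta v=w,\qquad \Delta^2 u=\cN u^{p-1}w,\qquad u=v^{1/p}.
\end{equation*}
Linearizing at $u\equiv\ell$ gives
\begin{equation*}
 \Delta^2\phi=-\cN p\,\ell^{2(p-1)}\Delta\phi=-k^2\Delta\phi,
\end{equation*}
so that $\Delta\phi$ satisfies the Helmholtz equation $(\Delta+k^2)\Delta\phi=0$. Its radial solutions are exactly the oscillatory profiles $r^{-(n-1)/2}\cos(kr+\theta)$ together with constants and $r^{2-n}$, which explains the form of the stated asymptotics.

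\textbf{Step 1: local existence and smoothness.} Solve the radial ODE
\begin{equation*}
 \Delta^2u=\cN u^{p-1}\bigl[-p u^{p-1}\Delta u-p(p-1)u^{p-2}|u'|^2\bigr]
\end{equation*}
with the data $u(0)=1$, $u'(0)=0$, $u''(0)=-s$, $u'''(0)=0$. The initial conditions are forced, up to the free parameter $s$, by regularity at the origin. Existence near $r=0$ follows from the standard fixed-point argument for the integral formulation with the singular weights $r^{1-n}\int_0^r t^{n-1}\cdots$. Smooth dependence on $s$ follows as well.

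\textbf{Step 2: global existence with $u$ bounded away from $0$.} Rewrite the equation in terms of $U=u-1$ and $W=\Delta u$, separating the linear part from the remainder:
\begin{equation*}
 (\Delta+k_1^2)\Delta U=N(U,\nabla U,\Delta U),\qquad k_1^2=\cN p,
\end{equation*}
where $N$ is at least quadratic. Set $Z=\Delta U$. Then the Helmholtz part obeys $|Z|\lesssim |s|\,r^{-(n-1)/2}$, and $U$ is recovered from $Z$ by the radial Newton-type formula. Running a bootstrap in weighted norms such as
\begin{equation*}
 \sup_r (1+r)^{(n-1)/2}\bigl(|U'|+|Z|+|Z'|\bigr)
\end{equation*}
should give global smallness of size $O(|s|)$ for all small $s$.

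This step is where I expect the main obstacle. The quadratic term contains $|u'|^2\sim r^{-(n-1)}$, which is integrable against the Helmholtz Green kernel only marginally when $n=5$. In addition, the limit $\ell_s$ differs from $1$, so the frequency must be renormalized. To handle both issues I would first try the following. Modify the linearization point to $\ell=\lim u$, determined a posteriori. Treat the slowly decaying non-oscillatory part through the harmonic correction $L_s$ by solving
\begin{equation*}
 \Delta L=\text{(mean part of the quadratic source)},
\end{equation*}
which produces the $O(r^{3-n})$ term from sources of size $r^{1-n}$. The oscillatory nonlinearity is then handled by a Duhamel/variation-of-parameters formula using the radial Bessel functions of order $(n-2)/2$, which are $O(r^{-(n-1)/2})$. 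The product of two oscillatory terms yields $r^{-(n-1)}$ sources containing both resonant and non-resonant frequencies ($0$ and $2k$). These are integrable against $r^{(n-1)/2}$ for $n\ge5$.

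\textbf{Step 3: asymptotics and nondegeneracy.} Variation of parameters gives
\begin{equation*}
 u_s=L_s+r^{-(n-1)/2}\bigl(A_s\cos(k_sr)+B_s\sin(k_sr)+o(1)\bigr),
\end{equation*}
and it remains to show $(A_s,B_s)\neq(0,0)$. At first order in $s$, $u_s-1\approx s\,\phi$, where $\phi$ is the radial regular solution of the linear problem with $\phi''(0)=-1$. This $\phi$ is an explicit combination of a constant and
\begin{equation*}
 r^{-(n-2)/2}J_{(n-2)/2}(k_1r),
\end{equation*}
whose oscillation amplitude is nonzero. Hence $(A_s,B_s)=s(a,b)+O(s^2)$ with $(a,b)\neq0$, and this persists for $s$ small.

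\textbf{Step 4: sign changes.} Differentiating the expansion twice gives
\begin{equation*}
 -\Delta u_s=k_s^2\,r^{-(n-1)/2}\bigl(A_s\cos+B_s\sin\bigr)+O(r^{-(n+1)/2})+O(r^{1-n}),
\end{equation*}
where the $O(r^{1-n})$ term comes from $\Delta L_s$. The oscillatory term dominates, so $-\Delta u_s$ changes sign infinitely often. Similarly,
\begin{equation*}
 -\Delta(u_s^p)=p\,u_s^{p-1}(-\Delta u_s)-p(p-1)u_s^{p-2}|u_s'|^2,
\end{equation*}
and the last term is $O(r^{1-n})$, so $-\Delta(u_s^p)$ changes sign infinitely often too. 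Since $u_s$ is not weakly superharmonic, Theorem~\ref{thm:weak-superharmonic} does not apply to it. It is not a bubble, since $u_s\to\ell_s>0$ at infinity, and it is not constant, since $s\neq0$.
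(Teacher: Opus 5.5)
There is a genuine gap. Step~2 carries the whole analytic content of the theorem, and as formulated it does not close. Because $u_s\to\ell_s$ with $\ell_s-1=-\frac{n}{\cN p}s+O(s^2)$, the term $-\cN p\,(u^{q-1}-1)\Delta u$ in your $N$ is not a decaying quadratic error. For large $r$ it equals $-(k_s^2-k_1^2)Z$, an $O(s)$ multiple of $Z$ with no extra decay, since $k_s^2-k_1^2=-(q-1)ns+O(s^2)$. Hence $r^{(n-1)/2}N\notin L^1(1,\infty)$, and in a Duhamel formula at the frozen frequency $k_1$ this term acts as a resonant forcing. Under a bootstrap bound $(1+r)^{(n-1)/2}|Z|\leq M|s|$, the integral only returns $|s|+Ms^2r$, so you cannot continue past $r\sim1/|s|$.

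Your fix, ``linearize at $\ell=\lim u$, determined a posteriori,'' is not implementable as stated. In a continuation argument on $[0,R]$ no limit exists yet. A joint fixed point in $(\ell,U)$ meets the same phase-separation problem whenever two values of $\ell$ are compared. (The $|u'|^2$ term is not the delicate one, since $r^{(n-1)/2}r^{1-n}$ is integrable with room to spare. The delicate quantity is the slow drift $O(s^2r^{3-n})$ of the background, which must be integrable for the phase to converge.)

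The missing idea is the exact identity
\[
 \Delta\bigl(\Delta u+Ku^q\bigr)=\cN p(p-1)u^{q-2}|\nabla u|^2 .
\]
It makes $\tau=\Delta u+Ku^q$ a monotone slow variable with flux $\tau'(r)=\cN p(p-1)r^{1-n}\int_0^rt^{n-1}u^{q-2}(u')^2\,dt$, quadratic in the perturbation. This is the precise form of your ``harmonic correction,'' and it requires no splitting of products into mean and $2k$ parts. The paper then linearizes around the \emph{running} background $L=(\tau/K)^{1/q}$. The function $e=u-L$ solves $e''+\frac{n-1}{r}e'+k(r)^2e=f$ with $k(r)^2=KqL^{q-1}$, $|f|\lesssim e^2+(e')^2+(\tau')^2$, and $|(k^2)'|\lesssim|\tau'|\in L^1$. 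An energy estimate for $E=r^{(n-1)/2}e$ with Gronwall, rather than Duhamel at a fixed frequency, closes the two-scale bootstrap $|e|+|e'|\leq M_e\delta(1+r)^{-(n-1)/2}$, $|\tau'|\leq M_\tau\delta^2(1+r)^{2-n}$. In this scheme $\ell_s$, $L_s$ and $k_s$ are outputs, not inputs.

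Step~3 has the same defect. After multiplying by $r^{(n-1)/2}$, which is the normalization that defines scattering data, $u_s-1=s\phi+O(s^2)$ cannot hold uniformly in $r$. The mode $\phi$ oscillates at $k_1=\sqrt{\cN p}$ while $u_s$ oscillates at $k_s$. Their phases therefore separate by $O(1)$ once $r\sim1/|s|$, and $r^{(n-1)/2}(u_s-1-s\phi)$ has size $|s|$ there. So $(A_s,B_s)=s(a,b)+O(s^2)$ does not follow from first-order perturbation theory, even though the formula is true.

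The paper instead compares $e$ with $e(0)\phi_{k_s^2}$, the regular Bessel mode at the \emph{final} frequency. The difference solves a constant-frequency Helmholtz equation with zero data and source $(k_s^2-k(r)^2)e+f$, whose weighted $L^1$ norm is $O(s^2)$. Its scattering vector is therefore $O(s^2)$, and smoothness of $\lambda\mapsto\mathcal S(\lambda)$ gives your formula.

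Finally, in Step~4 you cannot differentiate an expansion with an $o(1)$ remainder twice. Obtain $-\Delta u_s$ either directly from the Helmholtz equation for $Z$, or, as in the paper, algebraically from $-\Delta u=K\{(L+e)^q-L^q\}$.
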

\begin{remark}
    By the natural scaling $u(x)\mapsto c u(c^{p-1}x)$, the same construction is
available near every positive constant $c$.
\end{remark}
Theorems~\ref{thm:weak-superharmonic} and
\ref{thm:main} provide the corresponding Euclidean rigidity results, while
Theorem~\ref{thm:nonadm-scattering} describes what can occur without
superharmonicity.

We next place these results in the context of the classical and recent
literature.  The second-order prototype is the Yamabe problem, whose solution
was completed through the work of Yamabe, Trudinger, Aubin, and Schoen
\cite{Yamabe60,Trudinger68,Aubin76,Schoen84}.  On Euclidean space, the
moving-plane method of Gidas--Ni--Nirenberg \cite{GidasNiNirenberg79}
underlies the classification theorem of Caffarelli--Gidas--Spruck
\cite{CaffarelliGidasSpruck89}, which identifies the positive entire solutions
of the critical equation as the standard conformal bubbles.  This relation
between compact conformal rigidity and an entire Liouville theorem is the
second-order model for the present problem.

The corresponding fourth-order geometry is governed by the Paneitz operator
\cite{Paneitz08} and Branson's $Q$-curvature \cite{Branson85}.  Its flat
constant-$Q$ equation is the critical biharmonic equation, for which
Euclidean classification results were obtained by Lin \cite{Lin98} and, in
the higher-order conformally invariant setting, by Wei and Xu
\cite{WeiXu99}.  On closed manifolds, the constant-$Q$ problem has led to an
extensive existence theory; see, for example, Chang--Yang
\cite{ChangYang95} and Djadli--Malchiodi \cite{DjadliMalchiodi08}.

In contrast with the Yamabe equation, a general fourth-order equation has no
direct maximum principle.  Positivity properties of the Paneitz operator and
its Green function therefore play a central role; relevant results include
Gursky--Malchiodi \cite{GurskyMalchiodi15}, Gursky--Hang--Lin
\cite{GurskyHangLin16}, and Hang--Yang \cite{HangYang16}; a recent refinement,
including the remaining five-dimensional positivity question, is due to Li
\cite{Li26Paneitz}.

Curvature quotients form a related but analytically distinct class.  Ge and
Wang studied the second-order conformal quotient equations associated with
$\sigma_2/\sigma_1$ \cite{GeWang07,GeWang13}.  In such problems, positivity of
the denominator and the underlying admissible-cone condition are part of the
elliptic structure.

\medskip

We now outline the proofs.  The proof of Theorem~\ref{thm:main} begins by
setting \(z=-\Delta u\).  Under \(w\geq0\), a global Riesz decomposition gives
exact normalized Newton-potential representations for \(u\), \(u^p\), and
\(z\).  A nonlinear inequality for Newton potentials then gives the coercive
estimate
\begin{equation*}
 z\geq Ku^q.
\end{equation*}
Consequently \(T=z-Ku^q>0\), and after a Kelvin transformation the
fourth-order equation can be rewritten as the positive cooperative system
\begin{align*}
 -\Delta U&=V+AU^q,\\
 -\Delta V&=qA U^{q-1}(V+AU^q),
 \qquad A>0,
 \quad x\in\R^n\setminus\{0\}.
\end{align*}
We prove a punctured integral moving-plane theorem for this system.  The
argument follows the broad
integral moving-plane philosophy of Chen--Li--Ou \cite{ChenLiOu05}, but it must
accommodate two different decay rates, a possible B\^ocher atom, and a negative
power when $n\geq9$.  Since the equation and the normal-potential identities
are translation invariant, the resulting dichotomy may be applied with any
prescribed point as the inversion center: either the Kelvin puncture is
removable, or the original solution is radial about that center.

To close the classification, we apply this dichotomy at two distinct centers.
If either puncture is removable, the corresponding Kelvin transform is
bounded there and hence
\begin{equation*}
 u(x)=O(|x|^{4-n})
 \qquad (|x|\to\infty).
\end{equation*}
If neither puncture is removable, then \(u\) is radial about two distinct
points and must therefore be constant, contrary to the nonconstant reduction.
Thus the removable branch occurs at one of the two centers.  Stereographic
compactification and the Obata-type rigidity theorem of Ge--Wang--Wei
\cite{GeWangWei26} then identify the solution with one of the bubbles
\eqref{eq:bubble}.  The classical Obata theorem \cite{Obata71} provides the
underlying model for this final rigidity argument.

The mechanism behind Theorem~\ref{thm:weak-superharmonic} is the
 recovery of
admissibility.  Without assuming a sign for $w$, the quantity
\begin{equation*}
 T=z-Ku^q
\end{equation*}
satisfies the exact identity
\begin{equation*}
 -\Delta T=\cN p(p-1)u^{q-2}|\nabla u|^2\geq0.
\end{equation*}
Starting only from $z\geq0$, whole-space potential theory and a subharmonic
argument first yield $T>0$.  For $v=u+\varepsilon$, we introduce
$w_b=\Delta u+b|\nabla u|^2/v+Ku^q$ and bootstrap the inequality $w_b\leq0$
from $b=0$ to $b=p-1$, using the weighted quantities $v^{-b}w_b$.  At each
step, a direct calculation gives
$\Delta(v^{-b}w_b)\geq0$ on the positive set of $w_b$.  Kato's inequality
makes the positive part subharmonic, while a Newton-potential decomposition
gives a subvolume growth bound for $\int_{B_R}|\nabla u|$; together these
eliminate the positive part of $w_b$ globally.  The coefficient iteration yields
\eqref{eq:weak-gradient-upgrade-stated} and hence automatic admissibility.

Finally, the solutions in Theorem~\ref{thm:nonadm-scattering} arise from an
exact reduction to a radial Helmholtz oscillator plus a monotone slow
variable.  A bootstrap gives global positivity and convergence to a
constant, while comparison with the regular Bessel mode proves that the
scattering amplitude does not vanish.  The resulting oscillation forces both
$-\Delta u$ and $-\Delta(u^p)$ to change sign.

The paper is organized as follows.  Sections~2 and 3 establish normality and
the nonlinear Newton-potential inequality in the admissible class.  Section~4
derives the Kelvin-transformed cooperative system, and Section~5 proves the
punctured moving-plane theorem.  Section~6 establishes the compactification
result for fast-decaying solutions and completes the proof of
Theorem~\ref{thm:main} by the two-center argument.  Section~7 proves the global
$P$-function upgrade and Theorem~\ref{thm:weak-superharmonic}.  Section~8
constructs the non-admissible scattering family and proves
Theorem~\ref{thm:nonadm-scattering}.

\section{Potential theory and global normality}

Set
\begin{equation*}
 z=-\Delta u.
\end{equation*}
Expanding $w$ gives
\begin{equation*}
 w=p u^{p-1}z-p(p-1)u^{p-2}|\nabla u|^2
 =p u^{p-1}\left(z-(p-1)\frac{|\nabla u|^2}{u}\right).
\end{equation*}
Hence weak admissibility implies
\begin{equation*}
 z\geq (p-1)\frac{|\nabla u|^2}{u}\geq0.
\end{equation*}
The fourth-order equation is equivalently
\begin{equation}
 -\Delta z=\cN u^{p-1}w\geq0.
 \label{eq:z-equation}
\end{equation}
If $w\equiv0$, then the positive harmonic function $u^p$ is constant, and so
is $u$.  Henceforth, until the final proof of Theorem~\ref{thm:main}, we impose
the standing assumptions
\begin{equation*}
    w\geq0,\qquad w\not\equiv0.
\end{equation*}
Thus we work throughout the nonconstant admissible branch.  In particular,
$z\not\equiv0$, and the strong minimum principle applied to
\eqref{eq:z-equation} gives $z>0$.  Hence $u,u^p,z$ are positive
superharmonic functions on $\R^n$.

Let $I_2$ denote the normalized Newton potential
\begin{equation*}
 I_2f(x)=\frac{1}{(n-2)|\mathbb S^{n-1}|}
 \int_{\R^n}|x-y|^{2-n}f(y)\dd y,
 \qquad -\Delta I_2f=f,
\end{equation*}
whenever the nonnegative integral is finite, and let $I_4=I_2\circ I_2$.

We first establish the global Riesz representations for the three
superharmonic quantities introduced above.
\begin{lemma}
\label{lem:riesz-decomposition}
There is a constant $\ell\geq0$ such that
\begin{align}
 u&=\ell+I_2z, \label{eq:u-decomp}\\
 u^p&=\ell^p+I_2w, \label{eq:up-decomp}\\
 z&=\inf_{\R^n}z+\cN I_2(u^{p-1}w). \label{eq:z-decomp}
\end{align}
\end{lemma}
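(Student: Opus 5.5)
The plan is to apply the classical Riesz decomposition for positive superharmonic functions on $\R^n$ ($n\geq3$) to each of the three functions $u$, $u^p$, $z$, and then identify the three constants. By Section~2, under the standing assumptions $w\geq0$, $w\not\equiv0$, each of $u$, $u^p$, $z$ is a positive $C^2$ superharmonic function. Their Laplacians are $-\Delta u=z$, $-\Delta(u^p)=w$ and, by \eqref{eq:z-equation}, $-\Delta z=\cN u^{p-1}w$, all continuous and nonnegative.

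\emph{Step 1: decomposition of one function.} Let $f>0$ be $C^2$ and superharmonic on $\R^n$, with $\mu=-\Delta f\geq0$. I would first show $\int_{\R^n}(1+|y|)^{2-n}\mu(y)\dd y<\infty$. Green's representation on $B_R$ gives $f(0)\geq \int_{B_R}G_R(0,y)\mu(y)\dd y$, where $G_R$ is the Dirichlet Green function, because the boundary term is a positive harmonic function. Letting $R\to\infty$ and using $G_R(0,y)\uparrow c_n'|y|^{2-n}$ together with monotone convergence gives $I_2\mu(0)\leq f(0)<\infty$. This makes $I_2\mu$ finite everywhere, since it is superharmonic and finite at a point, and hence locally integrable. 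Then $h=f-I_2\mu$ satisfies $\Delta h=0$ in the distributional sense, so it is harmonic. Applying the same Green-function inequality at every center gives $h\geq0$. Liouville's theorem for positive harmonic functions then makes $h$ a constant $c\geq0$.

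\emph{Step 2: $c=\inf f$.} Let $M_R(g)$ denote the average of $g$ over $\partial B_R$. The classical formula
\[
M_R(I_2\mu)=\frac{1}{(n-2)|\mathbb S^{n-1}|}\int_{\R^n}\min\bigl(R^{2-n},|y|^{2-n}\bigr)\mu(y)\dd y
\]
together with dominated convergence (using Step~1) gives $M_R(I_2\mu)\to0$ as $R\to\infty$. Hence $\inf I_2\mu=0$, and so $c=\inf_{\R^n} f$. Applied to $z$, this gives \eqref{eq:z-decomp} directly. Applied to $u$ with $\ell:=\inf u$, it gives \eqref{eq:u-decomp}. Applied to $u^p$, it gives $u^p=\ell_p+I_2w$ with $\ell_p=\inf u^p$.

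\emph{Step 3: consistency $\ell_p=\ell^p$.} Since $t\mapsto t^p$ is increasing on $(0,\infty)$, we have $\inf u^p=(\inf u)^p$, so $\ell_p=\ell^p$ holds immediately. As a cross-check via the spherical means, $M_R(u)\to\ell$ and $M_R(u^p)\to\ell_p$, with Jensen's inequality giving $\ell_p\geq\ell^p$ and the pointwise bound $u\geq\ell$ giving the reverse.

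The only genuinely delicate point is Step~1, namely the finiteness of the global Newton potential together with the nonnegativity and harmonicity of the remainder. This is standard potential theory, but I would write out the exhaustion by the balls $B_R$ carefully, because the three potentials involve different densities and all of them must converge.
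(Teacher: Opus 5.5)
Your proposal is correct and follows the paper's route: Riesz decomposition plus Liouville for the positive harmonic remainder, then the Newton shell formula for spherical means with dominated convergence (using $I_2\mu(0)<\infty$) to get $\inf I_2\mu=0$, so the constant is $\inf s$, together with $\inf u^p=(\inf u)^p$. The only difference is that you derive the Riesz decomposition from Green's representation on balls instead of citing it. One small correction there: finiteness of $I_2\mu$ everywhere does not follow from ``superharmonic and finite at a point''; it follows from your inequality $I_2\mu(x)\le f(x)$ applied at every center, or from continuity of $\mu$.
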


\begin{proof}
By the global Riesz decomposition theorem,
\begin{equation*}
 s=I_2(-\Delta s)+h,
\end{equation*}
where \(h\geq0\) is entire harmonic and \(I_2(-\Delta s)\) is the
Newton potential of the Riesz measure of \(s\).  Hence \(h\) is a
nonnegative constant by Liouville's theorem.

For later use,  we identify this constant and prove the corresponding
spherical-mean decay.  Let \(P=I_2\mu\) be a nonnegative Newton potential
which is finite at the origin.  Newton's shell formula gives
\begin{equation}
 \fint_{\partial B_R}P
 =\frac{1}{(n-2)|\mathbb S^{n-1}|}
 \left[
 R^{2-n}\mu(B_R)
 +\int_{\R^n\setminus B_R}|y|^{2-n}\,\dd\mu(y)
 \right].
 \label{eq:Newton-shell-mean}
\end{equation}
Since \(P(0)<\infty\), the weight \(|y|^{2-n}\) is
\(\mu\)-integrable.  Hence dominated convergence in \eqref{eq:Newton-shell-mean} gives
\begin{equation*}
 \fint_{\partial B_R}P\longrightarrow0
 \qquad\text{as }R\to\infty.
\end{equation*}

Since \(P\geq0\), it follows that \(\inf_{\R^n}P=0\).  Consequently,
in the Riesz decomposition above, $h=\inf_{\R^n}s$.
The same argument applies after translation to any fixed center.

Applying this to \(u\) and \(z\) gives
\eqref{eq:u-decomp} and \eqref{eq:z-decomp}.  Applying it to \(u^p\)
and using $\inf_{\R^n}u^p =\ell^p$ gives \eqref{eq:up-decomp}.
\end{proof}

We next show that the harmonic constants in the preceding Riesz
decompositions vanish.
\begin{lemma}
\label{lem:normality}
In fact $\ell=0$ and $\inf_{\R^n}z=0$.  Consequently,
\begin{equation}
 \boxed{
 u=I_2z=\cN I_4(u^{p-1}w),\qquad
 u^p=I_2w,\qquad
 z=\cN I_2(u^{p-1}w).}
 \label{eq:normal-system}
\end{equation}
\end{lemma}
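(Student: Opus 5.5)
We need to show that $\ell=0$ and $m:=\inf_{\R^n}z=0$. The plan is to prove $m=0$ first, because the potential $u-\ell=I_2z$ is finite, and then use the equation to force $\ell=0$.

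\emph{Step 1: $m=0$.} By \eqref{eq:u-decomp}, $I_2z(0)=u(0)-\ell<\infty$. If $m>0$, then $z\geq m$ everywhere, so
\[
I_2z(0)\geq m\,\frac{1}{(n-2)|\mathbb S^{n-1}|}\int_{\R^n}|y|^{2-n}\dd y=\infty,
\]
which is a contradiction. Hence $m=0$, and \eqref{eq:z-decomp} becomes $z=\cN I_2(u^{p-1}w)$. Substituting into \eqref{eq:u-decomp} gives $u=\ell+\cN I_4(u^{p-1}w)$.

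\emph{Step 2: $\ell=0$.} Suppose $\ell>0$. Then $u\geq\ell$, so $u^{p-1}\geq\ell^{p-1}$ (recall $p>1$). This gives
\[
z=\cN I_2(u^{p-1}w)\geq \cN\ell^{p-1}I_2w=\cN\ell^{p-1}(u^p-\ell^p),
\]
using \eqref{eq:up-decomp}. Since $w\not\equiv0$, the function $u^p-\ell^p=I_2w$ is a positive Newton potential. Its decay is at least of order $|x|^{2-n}$: for large $|x|$,
\[
I_2w(x)\geq c|x|^{2-n}\int_{B_1}w>0 .
\]
Therefore $z(x)\gtrsim |x|^{2-n}$ for large $|x|$. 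Plugging this into $u-\ell=I_2z$ yields $I_2z(0)\gtrsim\int_{|y|>1}|y|^{2-n}|y|^{2-n}\dd y$, which is finite only when $n>4$, so this gives no contradiction by itself. I therefore iterate to obtain a better lower bound.

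\emph{Iteration.} Let $v=u-\ell\geq I_2z\geq0$. Combining the inequality above with $u^p-\ell^p\geq p\ell^{p-1}v$ gives the linear inequality
\[
z\geq \cN p\,\ell^{2(p-1)}\,v=:\kappa\, v, \qquad v=I_2z .
\]
So $v$ is a positive superharmonic function on $\R^n$ satisfying $-\Delta v\geq\kappa v$ with $\kappa>0$. Such a function cannot exist: the first Dirichlet eigenvalue of $B_R$ is $\lambda_1(B_R)=\lambda_1(B_1)R^{-2}$, which is less than $\kappa$ for $R$ large. Let $\phi>0$ be the first eigenfunction on $B_R$. Integrating by parts,
\[
\int_{B_R}\phi(-\Delta v)=\lambda_1\int_{B_R}\phi v-\int_{\partial B_R}v\,\partial_\nu\phi,
\]
and the boundary term is $\geq0$ because $\partial_\nu\phi\leq0$ and $v>0$. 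Hence
\[
\kappa\int_{B_R}\phi v\leq\lambda_1\int_{B_R}\phi v+\int_{\partial B_R}v|\partial_\nu\phi| .
\]
This alone is not contradictory. The correct argument is a Sturm comparison: pass to the spherical means $\bar v(r)$, which satisfy $-(r^{n-1}\bar v')'\geq\kappa r^{n-1}\bar v$ with $\bar v>0$. By comparison with the oscillating Bessel solution of $-\Delta\psi=\kappa\psi$, $\bar v$ would have to vanish between consecutive zeros of $\psi$, which contradicts $\bar v>0$. Hence $\ell=0$, and \eqref{eq:normal-system} follows from Steps 1 and 2.

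The main obstacle is making this comparison rigorous. The cleanest route is to take $\psi$ equal to the first Dirichlet eigenfunction on the annulus (or ball) $B_{R}$ with $\lambda_1<\kappa$, and apply Green's identity to $\bar v$ and $\psi$. Since $\psi=0$ and $\psi'<0$ at $r=R$, the boundary contributions have the right sign. We get
\[
(\kappa-\lambda_1)\int\bar v\psi\leq -R^{n-1}\bar v(R)\psi'(R)\cdot(\text{sign}),
\]
and this sign needs checking. If it fails, the fallback is the Picone-type argument with $\psi/\bar v$ on the nodal domain.
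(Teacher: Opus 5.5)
Your Step~1, and your derivation of $-\Delta v\ge\kappa v$ with $v=u-\ell=I_2z>0$ and $\kappa=\cN p\,\ell^{2(p-1)}$, match the paper's argument exactly. The gap is the final step. As written you never reach a contradiction, because your Green identity has the wrong sign on the boundary term. Since $\phi=0$ on $\partial B_R$, the correct identity is
\[
\int_{B_R}\phi(-\Delta v)=\lambda_1(B_R)\int_{B_R}\phi v+\int_{\partial B_R}v\,\partial_\nu\phi .
\]
Because $\partial_\nu\phi\le0$ and $v>0$, the boundary term is nonpositive. Hence $\kappa\int_{B_R}\phi v\le\lambda_1(B_R)\int_{B_R}\phi v$, i.e.\ $\lambda_1(B_R)\ge\kappa$ for every $R$. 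This is impossible, since $\lambda_1(B_R)=R^{-2}\lambda_1(B_1)\to0$. This is Barta's inequality, and it is exactly how the paper concludes. Because of the sign slip you called this ``not contradictory'', switched to a Sturm comparison, and then left the decisive sign unchecked with an unspecified Picone fallback. So your proposal does not yet prove $\ell=0$.

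The Sturm route does close once the sign is computed. Let $\psi$ be the regular radial solution of $-\Delta\psi=\kappa\psi$ with $\psi(0)=1$ and first zero $r_1$. Then $W=r^{n-1}(\psi\bar v'-\bar v\psi')$ satisfies $W(0)=0$ and
\[
W'=\psi\,(r^{n-1}\bar v')'-\bar v\,(r^{n-1}\psi')'\le0\quad\text{on }[0,r_1],
\]
whereas $W(r_1)=-r_1^{n-1}\bar v(r_1)\psi'(r_1)>0$. But this is just the radial form of the same first-eigenfunction argument on $B_{r_1}$, so the detour buys nothing. With the sign corrected, your first attempt is already the complete proof.
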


\begin{proof}
If $\inf_{\R^n}z>0$, then $z\geq\inf_{\R^n}z>0$.  Inserting this into
\eqref{eq:u-decomp} would require
the Newton potential of a positive constant on all of $\R^n$, which is infinite
at every point.  Thus $\inf_{\R^n}z=0$.

Suppose next that $\ell>0$.  By \eqref{eq:z-decomp} and
\eqref{eq:up-decomp},
\begin{align*}
 z
 &=\cN I_2(u^{p-1}w)
 \geq \cN\ell^{p-1} I_2w
 =\cN\ell^{p-1}(u^p-\ell^p)\\
 &\geq \cN p\ell^{2(p-1)}(u-\ell).
\end{align*}
Here convexity of $s\mapsto s^p$ was used in the second line.  With
$h=u-\ell>0$, this becomes
\begin{equation*}
 -\Delta h\geq \kappa h,
 \qquad \kappa=\cN p\ell^{2(p-1)}>0.
\end{equation*}
Thus by Barta's inequality, $\lambda_1(B_R)\geq\kappa$.  Since
$\lambda_1(B_R)=R^{-2}\lambda_1(B_1)\to0$, this is impossible.  Hence
$\ell=0$, and \eqref{eq:normal-system} follows.
\end{proof}

\section{A nonlinear Newton-potential inequality}

The next lemma is the key pointwise estimate.  It eliminates the earlier
``minimum escaping to infinity'' obstruction for the ratio
$z/u^q$.

\begin{lemma}
\label{lem:potential-chain}
Let $G\geq0$ be locally integrable, suppose $P=I_2G<\infty$ pointwise, and let
$\beta>1$.  Then
\begin{equation}
 P^\beta\leq \beta I_2(P^{\beta-1}G)
 \qquad\hbox{on }\R^n.
 \label{eq:potential-chain}
\end{equation}
No finiteness assumption on $\int_{\R^n}G$ is required.
\end{lemma}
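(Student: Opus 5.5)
The plan is to reduce to a regular, compactly supported density, where \eqref{eq:potential-chain} follows from a convexity computation and the minimum principle, and then to pass to the limit by monotone convergence. The formal mechanism is that for $\varphi(s)=s^\beta$, which is convex since $\beta>1$, one has
\begin{equation*}
 -\Delta(P^\beta)=\beta P^{\beta-1}(-\Delta P)-\beta(\beta-1)P^{\beta-2}|\nabla P|^2\leq \beta P^{\beta-1}G .
\end{equation*}
Hence $I_2(\beta P^{\beta-1}G)-P^\beta$ is superharmonic. It remains to show that this function is nonnegative, which requires control at infinity. That control is not available for a general $G$, which is why we truncate.

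\emph{Step 1: truncation.} For $R,M>0$ set $G_{R,M}=\min(G,M)\,\mathbf 1_{B_R}$ and $P_{R,M}=I_2G_{R,M}$. Since $G_{R,M}$ is bounded with compact support, $P_{R,M}$ lies in $C^{1,\alpha}(\R^n)\cap W^{2,r}_{\rm loc}$ for every $r<\infty$. It is positive (unless $G_{R,M}\equiv0$, a trivial case), bounded, and satisfies $P_{R,M}(x)=O(|x|^{2-n})$ at infinity. Put $F_{R,M}=\beta P_{R,M}^{\beta-1}G_{R,M}$. This is again bounded and compactly supported, so $I_2F_{R,M}$ is finite, continuous, nonnegative and $O(|x|^{2-n})$.

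\emph{Step 2: the inequality for the truncation.} Because $P_{R,M}\in W^{2,r}_{\rm loc}\cap C^1$ and $s\mapsto s^\beta$ is $C^1$ and convex on $(0,\infty)$, the chain rule holds a.e. This can be checked by mollifying $P_{R,M}$, or by the standard distributional inequality $\Delta\varphi(P)\geq\varphi'(P)\Delta P$ for convex $\varphi$. It gives
\begin{equation*}
 -\Delta\bigl(P_{R,M}^\beta\bigr)\leq \beta P_{R,M}^{\beta-1}G_{R,M}=F_{R,M}
\end{equation*}
in the sense of distributions. Thus $Q=I_2F_{R,M}-P_{R,M}^\beta$ is a continuous function with $-\Delta Q\geq0$ in $\mathcal D'(\R^n)$, hence superharmonic. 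Moreover $Q(x)\to0$ as $|x|\to\infty$, since both terms are $O(|x|^{2-n})$. The minimum principle on large balls $B_\rho$, letting $\rho\to\infty$, then yields $Q\geq0$, that is,
\begin{equation*}
 P_{R,M}^\beta\leq\beta I_2\bigl(P_{R,M}^{\beta-1}G_{R,M}\bigr)\quad\text{on }\R^n .
\end{equation*}

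\emph{Step 3: removing the truncation.} Let $R,M\uparrow\infty$. Then $G_{R,M}\uparrow G$ pointwise, so by monotone convergence $P_{R,M}\uparrow P$ pointwise; here $P<\infty$ by hypothesis. Consequently $P_{R,M}^{\beta-1}G_{R,M}\uparrow P^{\beta-1}G$, because $\beta-1>0$. A second application of monotone convergence inside the Newton kernel gives $I_2(P_{R,M}^{\beta-1}G_{R,M})\uparrow I_2(P^{\beta-1}G)$, possibly with the value $+\infty$, in which case \eqref{eq:potential-chain} is trivial. Passing to the limit in the inequality of Step 2 proves \eqref{eq:potential-chain}. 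No integrability of $G$ over $\R^n$ is used, only the monotone limits.

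The main obstacle is Step 2. One must justify the chain-rule inequality for a function that is only $W^{2,r}_{\rm loc}$, and one must secure the boundary behaviour at infinity needed for the minimum principle. The truncation is designed exactly to make both of these routine. If the a.e. chain rule feels delicate, I would instead mollify: set $P_\delta=P_{R,M}*\eta_\delta$, apply the smooth computation together with convexity to $P_\delta+\delta$, and let $\delta\to0$ in distributions, using the local uniform convergence of $P_\delta$ and the $L^r_{\rm loc}$ convergence of $\Delta P_\delta$.
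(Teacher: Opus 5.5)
Your proposal is correct and takes essentially the same route as the paper. It uses the same truncation $G_{R,M}=\min\{G,M\}\mathbf1_{B_R}$, the superharmonicity of $\beta I_2(P_{R,M}^{\beta-1}G_{R,M})-P_{R,M}^{\beta}$ via the convex chain rule (legitimate because $P_{R,M}>0$), the minimum principle on large balls using the $O(|x|^{2-n})$ decay, and monotone convergence to remove the truncation. The differences are only cosmetic: you drop the nonnegative term $\beta(\beta-1)P_{R,M}^{\beta-2}|\nabla P_{R,M}|^2$ and work with an inequality instead of the paper's exact identity, and you let $R,M\to\infty$ jointly rather than successively.
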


\begin{proof}
First take
\begin{equation*}
 G_{R,M}=\min\{G,M\}\mathbf1_{B_R},
 \qquad P_{R,M}=I_2G_{R,M}.
\end{equation*}
If \(G_{R,M}\equiv0\), there is nothing to prove.  Otherwise the strict
positivity of the Newton kernel gives \(P_{R,M}>0\) everywhere.  Hence,
after a standard smooth approximation if necessary, the chain rule below
is legitimate also when \(1<\beta<2\). Define
\begin{equation*}
 \Psi_{R,M}=\beta I_2\bigl(P_{R,M}^{\beta-1}G_{R,M}\bigr)-P_{R,M}^{\beta}.
\end{equation*}
In distributions,
\begin{equation*}
 -\Delta\Psi_{R,M}=\beta(\beta-1)P_{R,M}^{\beta-2} |\nabla P_{R,M}|^2
 \geq0.
\end{equation*}
For bounded compactly supported $G_{R,M}$, one has
$P_{R,M}(x)=O(|x|^{2-n})$.  Therefore the first term in $\Psi_{R,M}$ is
$O(|x|^{2-n})$, while
$P_{R,M}^\beta=O(|x|^{-\beta(n-2)})$.  In particular,
$\liminf_{|x|\to\infty}\Psi_{R,M}(x)\geq0$.  Applying the minimum principle on
large balls and sending the radius to infinity gives $\Psi_{R,M}\geq0$.

Letting first $M\to\infty$ and then $R\to\infty$, all nonnegative quantities
increase monotonically: $P_{R,M}\uparrow P$ and
$P_{R,M}^{\beta-1}G_{R,M}\uparrow P^{\beta-1}G$.  Monotone convergence yields
\eqref{eq:potential-chain}, even when the intermediate integrals are infinite.
In the application below the right-hand side is finite by
\eqref{eq:normal-system}.
\end{proof}

We now apply Lemma~3.1 to obtain the following estimate.
\begin{corollary}
\label{cor:z-lower}
Let
\begin{equation*}
 K=\frac{\cN p}{q}.
\end{equation*}
Then every solution under consideration satisfies
\begin{equation}
 z\geq Ku^q.
 \label{eq:z-lower}
\end{equation}
Moreover, if
\begin{equation*}
 T=z-Ku^q,
\end{equation*}
then
\begin{equation}
 T>0,\qquad
 -\Delta T=\cN p(p-1)u^{q-2}|\nabla u|^2\geq0.
 \label{eq:T-identity}
\end{equation}
\end{corollary}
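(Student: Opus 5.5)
Apply Lemma~\ref{lem:potential-chain} with $G=w$ and $P=u^p=I_2w$ from \eqref{eq:normal-system}, with exponent $\beta=q/p$. We need $\beta>1$, which holds since $q/p=n/(n-2)>1$. Then $P^\beta=u^q$ and $P^{\beta-1}=u^{p(\beta-1)}=u^{q-p}$. Because $q=2p-1$, we have $q-p=p-1$, so
\begin{equation*}
 u^q\leq \frac{q}{p}\,I_2\bigl(u^{p-1}w\bigr).
\end{equation*}
The normal representation in \eqref{eq:normal-system} gives $z=\cN I_2(u^{p-1}w)$. Hence $u^q\le \frac{q}{p\cN}z$, which is exactly $z\geq Ku^q$ with $K=\cN p/q$. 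This proves \eqref{eq:z-lower}. The right-hand side is finite because $z<\infty$.

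For the identity in \eqref{eq:T-identity}, compute directly. By \eqref{eq:z-equation}, $-\Delta z=\cN u^{p-1}w$, where
\begin{equation*}
 w=pu^{p-1}z-p(p-1)u^{p-2}|\nabla u|^2 .
\end{equation*}
Also
\begin{equation*}
 -\Delta(u^q)=qu^{q-1}z-q(q-1)u^{q-2}|\nabla u|^2 .
\end{equation*}
Form $-\Delta T=-\Delta z+K\Delta(u^q)$:
\begin{itemize}
\item The $z$-terms are $\cN p u^{2p-2}z-Kq u^{q-1}z$. These cancel, since $2p-2=q-1$ and $Kq=\cN p$.
\item The gradient terms are $-\cN p(p-1)u^{2p-3}|\nabla u|^2+Kq(q-1)u^{q-2}|\nabla u|^2$.
\end{itemize}
Using $2p-3=q-2$, $Kq=\cN p$ and $q-1=2(p-1)$, the gradient terms combine to
\begin{equation*}
 \cN p\bigl(2(p-1)-(p-1)\bigr)u^{q-2}|\nabla u|^2=\cN p(p-1)u^{q-2}|\nabla u|^2\geq0 .
\end{equation*}

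It remains to upgrade $T\ge0$ to $T>0$. Since $T\geq0$ is superharmonic, the strong minimum principle gives either $T>0$ everywhere or $T\equiv0$.

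Suppose $T\equiv0$. Then $-\Delta T=0$ forces $\nabla u\equiv0$ wherever $u>0$, which is everywhere. So $u$ is constant. This contradicts the standing assumption $w\not\equiv0$, since for constant $u$ we get $w=-\Delta(u^p)=0$. Hence $T>0$.

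The only delicate point is the exponent bookkeeping that makes the choice $\beta=q/p$ align with the factor $u^{p-1}$ in the representation of $z$. That alignment is precisely the relation $q=2p-1$, so I expect no real obstacle beyond checking this arithmetic.
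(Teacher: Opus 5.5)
Your proposal is correct and matches the paper's proof: you apply Lemma~\ref{lem:potential-chain} to $P=u^p=I_2w$ with $\beta=q/p$ and combine it with $z=\cN I_2(u^{p-1}w)$ to get $z\geq Ku^q$, expand $-\Delta z$ and $-\Delta(u^q)$ so that the $z$-terms cancel via $Kq=\cN p$, and use the strong minimum principle together with nonconstancy to get $T>0$. Your exponent bookkeeping ($q-p=p-1$, $2p-2=q-1$, $2p-3=q-2$) is also what the paper's computation implicitly relies on.
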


\begin{proof}
Apply Lemma~\ref{lem:potential-chain} to
\begin{equation*}
 P=u^p=I_2w,
 \qquad
 \beta=\frac{q}{p}=\frac{n}{n-2}.
\end{equation*}
It follows that
\begin{equation*}
 u^q\leq \frac{q}{p} I_2(u^{p-1}w)=\frac{q}{p\cN}z=\frac{1}{K}z.
\end{equation*}
This is exactly \eqref{eq:z-lower}.

For the identity, expand
\begin{align*}
 -\Delta z
 &=\cN p u^{q-1}z-\cN p(p-1)u^{q-2}|\nabla u|^2,\\
 -\Delta(Ku^q)
 &=Kq u^{q-1}z-Kq(q-1)u^{q-2}|\nabla u|^2.
\end{align*}
Using $Kq=\cN p$, the $z$ terms cancel and give
\eqref{eq:T-identity}.  We already know $T\geq0$.  If $T$ vanished at an
interior point, the strong minimum principle would imply $T\equiv0$; then
\eqref{eq:T-identity} would force $\nabla u\equiv0$, contradicting the
nonconstant reduction made above.
Thus $T>0$.
\end{proof}

\section{Kelvin transform and a positive cooperative system}

The main obstacle to applying moving planes directly to \(u\) is the lack of
a priori control at infinity.  Fix any prescribed point as the inversion
center.  After translating coordinates and relabeling the translated solution,
we may take that point to be the origin and define
\begin{equation*}
 U(x)=|x|^{4-n}u\left(\frac{x}{|x|^2}\right),
 \qquad x\in\R^n\setminus\{0\}.
\end{equation*}
Write $\iota(x)=x/|x|^2$ and define
\begin{equation*}
 W(x)=|x|^{-n-2}w(\iota(x)).
\end{equation*}

The Newton-potential identities are exactly covariant under
this inversion.
\begin{lemma}
\label{lem:Kelvin-potentials}
On $\R^n\setminus\{0\}$,
\begin{equation}
 U^p=I_2W,
 \qquad
 U=\cN I_4(U^{p-1}W),
 \qquad
 Z:=-\Delta U=\cN I_2(U^{p-1}W)>0.
 \label{eq:Kelvin-normal}
\end{equation}
In particular, $Z$ has no unknown harmonic remainder.
\end{lemma}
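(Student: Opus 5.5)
The plan is to derive all three identities in \eqref{eq:Kelvin-normal} from the normal representations \eqref{eq:normal-system} by one change of variables under the inversion $\iota$. The only geometric input is the chordal identity
\begin{equation*}
 |\iota(x)-\iota(y)|=\frac{|x-y|}{|x||y|},
 \qquad \dd(\iota y)=|y|^{-2n}\dd y .
\end{equation*}
From it I will derive two Kelvin covariance formulas for Riesz potentials of nonnegative measurable $f$. For $\alpha\in\{2,4\}$ and $x\neq0$,
\begin{equation*}
 |x|^{\alpha-n}(I_\alpha f)(\iota x)=I_\alpha f^{(\alpha)}(x),
 \qquad f^{(\alpha)}(y):=|y|^{-n-\alpha}f(\iota y).
\end{equation*}
I take $I_4$ with kernel $c|x-y|^{4-n}$, using the semigroup identity $I_2\circ I_2=I_4$ for nonnegative functions (Fubini--Tonelli together with the Riesz composition formula). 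The formula for $\alpha=4$ then either follows from the kernel computation directly or by applying the $\alpha=2$ formula twice. The set $\{0\}$ has measure zero, so the change of variables is legitimate as an identity in $[0,\infty]$, with no integrability assumptions needed.

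Next I will apply these formulas to the representations from Lemma~\ref{lem:normality}. For $u^p=I_2w$, take $\alpha=2$. The exponent identity $(4-n)p=2-n$ gives $|x|^{2-n}u^p(\iota x)=U(x)^p$, while $w^{(2)}=W$ by definition, so $U^p=I_2W$. For $u=\cN I_4(u^{p-1}w)$, take $\alpha=4$ and set $g=u^{p-1}w$. Then
\begin{equation*}
 g^{(4)}(x)=|x|^{-2}u^{p-1}(\iota x)\,W(x).
\end{equation*}
Since $(4-n)(p-1)=-2$, we have $U^{p-1}=|x|^{-2}u^{p-1}(\iota x)$, so $g^{(4)}=U^{p-1}W$ and $U=\cN I_4(U^{p-1}W)$.

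For $Z$, write $F=\cN I_2(U^{p-1}W)$, so that $U=I_2F$ by the semigroup identity. Since $U<\infty$ everywhere on $\R^n\setminus\{0\}$, $F$ is finite a.e. and locally integrable, and $-\Delta U=F$ in $\mathcal D'(\R^n\setminus\{0\})$. On the other hand, $U\in C^4(\R^n\setminus\{0\})$, so $Z=-\Delta U$ classically. Hence $Z=F$ a.e. Both functions are superharmonic on the punctured space: $F$ is a Newton potential, and for $Z$ we have $-\Delta Z=\cN U^{p-1}W\geq0$, which is the Kelvin image of \eqref{eq:z-equation}. Superharmonic functions that agree a.e. agree everywhere, since each equals the limit of its small-ball means. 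Therefore $Z=\cN I_2(U^{p-1}W)$ pointwise, with no harmonic remainder.

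Positivity of $Z$ follows because $W\geq0$, $W\not\equiv0$ (the standing assumption $w\not\equiv0$), $U>0$, and the Newton kernel is strictly positive. I expect the main obstacle to be this last identification of $Z$: the Kelvin formula alone only produces $U$ as an $I_4$ potential, and passing to the second-order potential for $Z$ without a possible harmonic remainder near the puncture requires the a.e.-to-everywhere argument above. The rest is bookkeeping of the exponents $(4-n)p=2-n$ and $(4-n)(p-1)=-2$.
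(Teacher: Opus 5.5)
Your proposal is correct and follows essentially the same route as the paper. Both use the Kelvin covariance $I_\alpha(|\cdot|^{-n-\alpha}H\circ\iota)(x)=|x|^{\alpha-n}(I_\alpha H)(\iota x)$ for $\alpha\in\{2,4\}$, obtained by inversion and Tonelli. Both apply it to $u^p=I_2w$ and $u=\cN I_4(u^{p-1}w)$ with the exponent bookkeeping $(4-n)p=2-n$ and $(4-n)(p-1)=-2$. Both then apply $-\Delta$ through $I_4=I_2\circ I_2$. Your a.e.-to-everywhere identification of $Z$ via superharmonicity, and your strict-positivity argument, spell out what the paper leaves implicit.

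One small caution: derive the $\alpha=4$ formula from the direct kernel computation with $|x-y|^{4-n}$, not by composing the $\alpha=2$ formula twice. The second- and fourth-order Kelvin weights differ, so composing twice only gives $I_4f^{(4)}(x)=|x|^{2-n}\,I_2\bigl(|\cdot|^{-4}I_2(|\cdot|^{2}f)\bigr)(\iota x)$, which is not visibly $|x|^{4-n}(I_4f)(\iota x)$.
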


\begin{proof}
For $\alpha\in\{2,4\}$, the Riesz kernel satisfies the Kelvin covariance
identity
\begin{equation}
 I_\alpha\left(|\cdot|^{-n-\alpha}H\circ\iota\right)(x)
 =|x|^{\alpha-n}(I_\alpha H)(\iota(x)).
 \label{eq:Riesz-Kelvin}
\end{equation}
It follows directly by inversion in the defining nonnegative integral, hence
Tonelli's theorem suffices and no finite-mass assumption is needed.

Apply \eqref{eq:Riesz-Kelvin} first to $u^p=I_2w$.  Since
$U^p=|x|^{2-n}u^p\circ\iota$, this yields $U^p=I_2W$.
Next apply it to $u=\cN I_4(u^{p-1}w)$.  Because
\begin{equation*}
 U^{p-1}W=|x|^{-n-4}(u^{p-1}w)\circ\iota,
\end{equation*}
we obtain $U=\cN I_4(U^{p-1}W)$.  Applying $-\Delta$ gives the last identity in
\eqref{eq:Kelvin-normal}.
For completeness, these transformed densities are locally integrable at the
puncture.  Indeed, finiteness of each of the two positive potentials at a fixed
point $x_0\ne0$, together with a positive lower bound for the corresponding
kernel on a small ball about the origin, gives
$W,U^{p-1}W\in L^1_{\mathrm{loc}}$, respectively.  Thus all distributional
differentiations above and below are legitimate across compact subsets of the
punctured space.
\end{proof}

Lemma~\ref{lem:potential-chain}, applied to
$G=W$ through the monotone double truncations
\begin{equation*}
 \min\{W,M\}\mathbf1_{\{\varepsilon<|x|<R\}},
\end{equation*}
and then with
$M,R\uparrow\infty$ and $\varepsilon\downarrow0$, gives
\begin{equation*}
 Z\geq KU^q.
\end{equation*}
Define
\begin{equation*}
 A=\frac K2=\frac{\cN p}{2q},
 \qquad
 V=Z-AU^q.
\end{equation*}
Using the identities for $U$ and $Z$ in \eqref{eq:Kelvin-normal}, together
with $I_4=I_2\circ I_2$, we obtain the exact formula
\begin{equation}
 U=I_2Z=I_2(V+AU^q).
 \label{eq:Kelvin-U-I2Z}
\end{equation}

With this choice of \(V\), the fourth-order equation reduces to a positive
cooperative second-order system.
\begin{lemma}
\label{lem:cooperative-system}
The pair $(U,V)$ is positive and satisfies
\begin{equation}
 \boxed{
 \begin{aligned}
 -\Delta U&=V+AU^q,\\
 -\Delta V&=qA U^{q-1}(V+AU^q)
 \end{aligned}}
 \qquad\hbox{in }\R^n\setminus\{0\}.
 \label{eq:cooperative-system}
\end{equation}
Moreover $V\geq AU^q>0$.
\end{lemma}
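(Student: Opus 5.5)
The plan is to verify the two equations directly from the Kelvin-transformed identities \eqref{eq:Kelvin-normal} and then read off positivity from the inequality $Z\geq KU^q$. The first equation is immediate: by definition $Z=-\Delta U$ and $V=Z-AU^q$, so $-\Delta U=Z=V+AU^q$. Equivalently it follows from \eqref{eq:Kelvin-U-I2Z} by applying $-\Delta$ in the distributional sense on $\R^n\setminus\{0\}$. This is legitimate because the densities are locally integrable there, by the last paragraph of the proof of Lemma~\ref{lem:Kelvin-potentials}.

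For the second equation I will compute $-\Delta V=-\Delta Z+A\Delta(U^q)$ on the punctured space. From \eqref{eq:Kelvin-normal}, $-\Delta Z=\cN U^{p-1}W$, and $-\Delta(U^p)=W$. Expanding both gives
\begin{equation*}
 W=pU^{p-1}Z-p(p-1)U^{p-2}|\nabla U|^2,
 \qquad
 -\Delta(U^q)=qU^{q-1}Z-q(q-1)U^{q-2}|\nabla U|^2 .
\end{equation*}
Since $2(p-1)=q-1$, this yields
\begin{equation*}
 -\Delta V=\cN pU^{q-1}Z-\cN p(p-1)U^{q-2}|\nabla U|^2-AqU^{q-1}Z+Aq(q-1)U^{q-2}|\nabla U|^2 .
\end{equation*}
With $A=\cN p/(2q)$ we have $Aq=\cN p/2$, and $Aq(q-1)=\cN p(2p-2)/2=\cN p(p-1)$, so the gradient terms cancel exactly. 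What remains is $-\Delta V=\tfrac{\cN p}{2}U^{q-1}Z=qAU^{q-1}(V+AU^q)$, which is the second equation. This is the same cancellation mechanism as in Corollary~\ref{cor:z-lower}, now with half the constant, which is exactly what kills the gradient term rather than leaving a sign.

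Positivity comes from the inequality $Z\geq KU^q$ obtained above via Lemma~\ref{lem:potential-chain} and truncation. Since $K=2A$, it gives $V=Z-AU^q\geq AU^q$. Also $U>0$, as $u>0$, so $V\geq AU^q>0$.

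The only delicate point I anticipate is regularity. Pointwise use of the chain rule on $\R^n\setminus\{0\}$ requires $U\in C^4$ there. This holds because $u\in C^4(\R^n)$ and the inversion $\iota$ is smooth away from the origin, so all the computations above are classical identities on the punctured space, and no distributional subtleties arise away from $0$. I will therefore perform the computation classically and use \eqref{eq:Kelvin-normal} only to identify $Z$, $W$, and their Laplacians.
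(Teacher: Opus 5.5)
Your proposal is correct and follows the paper's proof essentially verbatim. Positivity comes from $Z\geq KU^q=2AU^q$, and the second equation comes from expanding $-\Delta Z=\cN U^{p-1}W$ and $-\Delta(U^q)$, where $Aq=\cN p/2$ and $q-1=2(p-1)$ make the gradient-square terms cancel exactly; your added remarks (the first equation is just the definition of $V$, and $U$ is classically $C^4$ away from the puncture) are correct and make explicit what the paper leaves implicit.
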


\begin{proof}
The lower bound follows from $Z\geq KU^q=2AU^q$.  To compute the second
equation, use the expansions in the proof of
Corollary~\ref{cor:z-lower}.  Since $Aq=\cN p/2$ and $q-1=2(p-1)$, the two
gradient-square terms cancel exactly:
\begin{align*}
 -\Delta V
 &=-\Delta Z-A[-\Delta(U^q)]\\
 &=\frac{\cN p}{2}U^{q-1}Z
 =qA U^{q-1}(V+AU^q).
\end{align*}
\end{proof}

The regularity of $u$ at the original inversion center gives the
differentiable far-field expansion
\begin{equation*}
 U(x)=u(0)r^{4-n}+r^{3-n}\nabla u(0)\cdot\theta
      +O_2(r^{2-n}),
 \qquad r=|x|,\quad\theta=x/r,
\end{equation*}
where $O_2(r^\gamma)$ denotes a remainder whose derivatives of order
$j\leq2$ are $O(r^{\gamma-j})$.  Hence
\begin{align}
 U(x)&=u(0)|x|^{4-n}\bigl(1+O(|x|^{-1})\bigr),
 \label{eq:U-tail}\\
 V(x)&=2(n-4)u(0)|x|^{2-n}\bigl(1+O(|x|^{-1})\bigr).
 \label{eq:V-tail}
\end{align}
Indeed, the displayed $O_2$ expansion may be differentiated twice, and
\begin{align*}
 AU^q&=O(r^{-n}),\\
 -\Delta\bigl(u(0)r^{4-n}\bigr)&=2(n-4)u(0)r^{2-n}.
\end{align*}
We first establish a standard local regularity fact that will be used to
remove the Kelvin puncture.
\begin{lemma}
\label{lem:distributional-removal}
Let $v\geq0$, $v\in L^1_{\mathrm{loc}}(B_1)$, and let
$a,f\in L^\infty(B_1)$.  Assume that
\begin{equation*}
 \int_{B_1}v(-\Delta\varphi)\,\dd x
 =\int_{B_1}(av+f)\varphi\,\dd x
 \qquad\text{for every }\varphi\in C_c^\infty(B_1).
\end{equation*}
Then
\begin{equation*}
 v\in L^\infty_{\mathrm{loc}}(B_1)
 \cap W^{2,r}_{\mathrm{loc}}(B_1)
 \qquad\text{for every }1<r<\infty.
\end{equation*}
\end{lemma}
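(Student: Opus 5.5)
The plan is to bootstrap regularity through the Newton potential, with the first step being a gain of integrability by iterating a representation formula. Fix $B_\rho\Subset B_1$ and a cutoff $\eta\in C_c^\infty(B_1)$ with $\eta\equiv1$ on $B_\rho$. The key observation is that $-\Delta v=av+f$ holds in distributions, and $av+f\in L^1_{\mathrm{loc}}$ because $v\in L^1_{\mathrm{loc}}$ and $a,f\in L^\infty$. Set
\begin{equation*}
 N=I_2\bigl(\eta(av+f)\bigr).
\end{equation*}
Then $-\Delta(v-N)=0$ in distributions on $B_\rho$, so $h=v-N$ is harmonic by Weyl's lemma, hence smooth on $B_\rho$. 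Thus, on $B_\rho$, $v=h+N$, with $h$ smooth and $N$ the Newton potential of an $L^1$ function of compact support.

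The bootstrap proceeds as follows.
\begin{itemize}
\item Since the kernel $|x|^{2-n}$ lies in $L^{s}_{\mathrm{loc}}$ for $s<n/(n-2)$, Young's inequality gives $N\in L^{s}_{\mathrm{loc}}$ for every $s<n/(n-2)$. Hence $v\in L^{s_1}_{\mathrm{loc}}$ on a slightly smaller ball, with $s_1>1$.
\item Repeat on a shrinking sequence of balls. If $v\in L^{s}_{\mathrm{loc}}$, then $\eta(av+f)\in L^s$, and Hardy--Littlewood--Sobolev gives $N\in L^{s^*}$ with $1/s^*=1/s-2/n$ whenever $s<n/2$.
\item Once $s>n/2$, the potential $N$ is bounded, in fact Hölder continuous.
\end{itemize}
After finitely many steps, and with only finitely many shrinkings of the ball, this yields $v\in L^\infty_{\mathrm{loc}}$. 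Nonnegativity of $v$ is not essential here; it only makes the $L^1$ setup clean.

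Once $v$ is locally bounded, the right-hand side $av+f$ lies in $L^\infty_{\mathrm{loc}}\subset L^r_{\mathrm{loc}}$ for every $r$. The Calderón--Zygmund estimate for the Newton potential then gives $N\in W^{2,r}$ on the ball for $1<r<\infty$. Since $h$ is smooth, $v=h+N\in W^{2,r}_{\mathrm{loc}}$. A covering argument passes from a fixed ball $B_\rho$ to all compact subsets of $B_1$.

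I expect the main obstacle to be the bookkeeping in the integrability iteration. Each step needs a fresh cutoff on a smaller ball and a fresh harmonic remainder. We must also check that the exponents actually cross $n/2$ after finitely many steps: they do, since $1/s$ decreases by $2/n$ at each step, apart from the first step from $L^1$, which uses the weak-type bound instead of HLS. For the critical case $s=n/2$, simply take $s$ slightly smaller to avoid equality.
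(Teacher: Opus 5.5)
Your proposal is correct and follows essentially the paper's argument. Both use a local Newton representation with a harmonic remainder, get into $L^s_{\mathrm{loc}}$ for $s<n/(n-2)$ by the weak-type (or truncated-kernel Young) bound, iterate with gain $1/s\mapsto 1/s-2/n$ until $s>n/2$ to reach $L^\infty_{\mathrm{loc}}$, and finish with a Calder\'on--Zygmund $W^{2,r}$ estimate. The only cosmetic difference is that you get each integrability gain directly from Hardy--Littlewood--Sobolev applied to the potential, whereas the paper uses interior $W^{2,s}$ estimates followed by Sobolev embedding.
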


\begin{proof}
Fix \(B_\rho \Subset B_{2\rho} \Subset B_1\), and choose
\(\eta\in C_c^\infty(B_{2\rho})\) with \(\eta\equiv 1\) on \(B_\rho\).
Since \(av+f\in L^1_{\mathrm{loc}}\), the local Newton representation
\begin{equation*}
v=I_2\bigl(\eta(av+f)\bigr)+h
\qquad\text{in }B_\rho,
\end{equation*}
with \(h\) harmonic, and the weak endpoint estimate for \(I_2\) give
\begin{equation*}
v\in L^s_{\mathrm{loc}},
\qquad
1<s<\frac{n}{n-2}.
\end{equation*}
Interior \(W^{2,s}\) estimates and Sobolev embedding then improve the
integrability exponent. Iterating finitely many times until $s>n/2$ yields $v\in L^\infty_{\mathrm{loc}}$. Hence \(av+f\in L^r_{\mathrm{loc}}\) for every finite \(r\), and a final
interior \(W^{2,r}\) estimate gives $v\in W^{2,r}_{\mathrm{loc}}(B_1)$ for every $1<r<\infty$.
\end{proof}
We now apply this to the Kelvin pair, and obtain the following result.
\begin{lemma}
\label{lem:Bocher-pair}
There exists $d\geq0$ such that
\begin{equation}
 V=d\Phi+I_2\left[qA U^{q-1}(V+AU^q)\right],
 \qquad
 \Phi(x)=\frac{1}{(n-2)|\mathbb S^{n-1}|}|x|^{2-n}.
 \label{eq:V-Bocher}
\end{equation}
If $U$ is bounded near $0$, then $d=0$ and the pair $(U,V)$ extends smoothly
and positively across $0$.
\end{lemma}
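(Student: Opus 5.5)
Two things need to be proved: the representation \eqref{eq:V-Bocher}, and the removability of the puncture when $U$ is bounded near $0$.

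\textbf{Step 1: Riesz--B\^ocher decomposition of $V$.} By Lemma~\ref{lem:cooperative-system}, $V$ is positive and superharmonic on $\R^n\setminus\{0\}$, with $-\Delta V=F:=qAU^{q-1}(V+AU^q)\geq0$.
\begin{itemize}
\item Since $V>0$, the classical B\^ocher theorem for positive superharmonic functions on a punctured ball applies. In distributions on $B_1$ it gives $-\Delta V=F+d\delta_0$ with $d\geq0$, and $F\in L^1(B_1)$.
\item On all of $\R^n$, $V$ is then a nonnegative superharmonic function whose Riesz measure is $F\,dx+d\delta_0$.
\item The global Riesz decomposition, as in the proof of Lemma~\ref{lem:riesz-decomposition}, gives $V=d\Phi+I_2F+h$, where $h\geq0$ is an entire harmonic function. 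By Liouville's theorem, $h$ is a constant.
\item By \eqref{eq:V-tail}, $V\to0$ at infinity. The potentials $d\Phi$ and $I_2F$ are nonnegative, so $h\leq\liminf_{|x|\to\infty}V=0$. Hence $h=0$, which is \eqref{eq:V-Bocher}.
\end{itemize}
Finiteness of $I_2F$ is automatic, because $I_2F\leq V<\infty$ away from $0$.

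\textbf{Step 2: $d=0$ when $U$ is bounded.} Suppose $U\leq C$ near $0$.
\begin{itemize}
\item By \eqref{eq:Kelvin-U-I2Z}, $U=I_2(V+AU^q)\geq I_2V\geq d\,I_2\Phi$.
\item $I_2\Phi=+\infty$ identically, since $\int|y|^{2-n}|x-y|^{2-n}\,dy$ diverges at infinity. This already forces $d=0$ whenever $U<\infty$ at a single point.
\item Alternatively, and more convincingly as a local argument: near $0$ one computes $-\Delta U=V+AU^q\geq d\Phi$. A local Newton-potential comparison then gives $U\gtrsim d\,|x|^{4-n}$ near $0$ when $n>4$, contradicting boundedness unless $d=0$.
\end{itemize}
I would present the local comparison version, which also survives the truncation issues.

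\textbf{Step 3: smooth positive extension across $0$.} Now $d=0$, so $V=I_2F$.
\begin{itemize}
\item Boundedness of $U$ near $0$ gives $a:=qAU^{q-1}\in L^\infty(B_1)$ and $f:=qA^2U^{2q-1}\in L^\infty(B_1)$ when $q\geq1$. Here $q>1$ always.
\item With $d=0$, the distributional equation $-\Delta V=aV+f$ holds on all of $B_1$. Lemma~\ref{lem:distributional-removal} then gives $V\in L^\infty_{\mathrm{loc}}\cap W^{2,r}_{\mathrm{loc}}$.
\item Next, $-\Delta U=V+AU^q$ holds distributionally on $B_1$. A bounded function that is a distributional solution on the punctured ball with a bounded right-hand side solves the equation across the point: a point has zero capacity for $n\geq3$, or one can use the representation \eqref{eq:Kelvin-U-I2Z} directly. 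Elliptic regularity then gives $U\in W^{2,r}_{\mathrm{loc}}$, so $U$ is $C^{1,\alpha}$.
\item Bootstrapping through Schauder estimates makes $U$ and $V$ smooth; the nonlinearities are smooth because $U>0$.
\item Positivity at $0$: $U$ and $V$ are superharmonic and positive nearby, so the minimum principle gives $U(0),V(0)>0$.
\end{itemize}

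\textbf{Main obstacle.} The delicate point is justifying the B\^ocher step globally, namely that the distributional Laplacian of $V$ across $0$ is exactly $-F-d\delta_0$ with $F\in L^1_{\mathrm{loc}}$. I would handle it with the standard B\^ocher theorem for positive superharmonic functions on a punctured ball, combined with the local integrability already established in Lemma~\ref{lem:Kelvin-potentials}.
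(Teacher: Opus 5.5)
Your proposal is correct and follows the paper's proof essentially step for step: B\^ocher plus the global Riesz decomposition, with the tail \eqref{eq:V-tail} killing the harmonic constant; the lower bound $U\gtrsim d|x|^{4-n}$ near $0$, obtained from $U=I_2(V+AU^q)$, to force $d=0$; and then Lemma~\ref{lem:distributional-removal}, the global identity for $U$, and bootstrapping.

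One side remark in Step 2 is false and should be deleted. For $n\geq5$ the integral $\int_{\R^n}|y|^{2-n}|x-y|^{2-n}\,\dd y$ converges at infinity, since the integrand is $O(|y|^{4-2n})$ and $2n-4>n$. So $I_2\Phi=c|x|^{4-n}$ is finite away from the origin, and finiteness of $U$ at a point does not force $d=0$. However, $U\geq d\,I_2\Phi=c\,d|x|^{4-n}$ is then exactly the blow-up near $0$ that your local comparison needs, so the argument you say you would present is sound.
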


\begin{proof}
Set
\begin{equation*}
 g=qA U^{q-1}(V+AU^q).
\end{equation*}
By Riesz decomposition and B\^ocher's theorem,
\begin{equation*}
    V=d\Phi+I_2g+H,
\end{equation*}
where $H$ is an entire harmonic function.  As in the proof of
Lemma~\ref{lem:riesz-decomposition}, $d\Phi+I_2g\leq V$, so $H\geq0$.
Hence $H\equiv c\geq0$.  The decay \eqref{eq:V-tail} forces $c=0$, proving
\eqref{eq:V-Bocher}.

Suppose $U$ is bounded near zero.  If $d>0$, then the nonnegative
representation \eqref{eq:V-Bocher} gives $V\geq d\Phi$, and the exact identity
$U=I_2(V+AU^q)$ implies
\begin{equation*}
 U(x)\gtrsim I_2(\Phi\mathbf1_{B_\rho})(x)
 \gtrsim |x|^{4-n}
\end{equation*}
near zero, contradicting boundedness.  Hence $d=0$, so $V$ is
$L^1_{\mathrm{loc}}$ and we have the identity
\begin{equation*}
 \int_{B_\rho}V(-\Delta\varphi)\,\dd x
 =\int_{B_\rho}\bigl(qA U^{q-1}V+qA^2U^{2q-1}\bigr)\varphi\,\dd x
 \qquad(\varphi\in C_c^\infty(B_\rho))
\end{equation*}
for a sufficiently small $\rho>0$.  Since $U$ is bounded there, 
Lemma~\ref{lem:distributional-removal} therefore gives
$V\in L^\infty_{\mathrm{loc}}\cap W^{2,r}_{\mathrm{loc}}$ for every finite
$r$.  Since $U$ is bounded, $V+AU^q\in L^r_{\mathrm{loc}}$ for every finite
$r$; hence the equation for $U$ gives $U\in W^{2,r}_{\mathrm{loc}}$.
Thus $U,V\in C^{1,\alpha}_{\mathrm{loc}}$ for some $\alpha\in(0,1)$.
The extended equations and the strong minimum principle first give
$U,V>0$ across $0$.  The nonlinearities are consequently smooth near the
puncture, and standard local elliptic
bootstrapping then gives smoothness across \(0\).
\end{proof}

\section{Moving planes in the punctured space}

This section gives a self-contained moving-plane argument adapted to the
mixed decay rates $U\sim |x|^{4-n}$ and $V\sim |x|^{2-n}$.  We shall use the
two nonlinearities
\begin{equation*}
 H_1(s,t)=t+As^q,
 \qquad
 H_2(s,t)=qA s^{q-1}(t+As^q).
\end{equation*}
Thus $U=I_2H_1(U,V)$ exactly, whereas
$V=d\Phi+I_2H_2(U,V)$ by Lemma~\ref{lem:Bocher-pair}.

For $\lambda<0$, set
\begin{equation*}
 \Sigma_\lambda=\{x_1<\lambda\},
 \qquad
 x^\lambda=(2\lambda-x_1,x'),
 \qquad
 P_\lambda=2\lambda e_1.
\end{equation*}
Write $U_\lambda(x)=U(x^\lambda)$ and
$V_\lambda(x)=V(x^\lambda)$.  Notice that
$|x^\lambda|<|x|$ for $x\in\Sigma_\lambda$, and that $P_\lambda$ is the
reflection of the puncture.

The next two lemmas provide the common inputs for both the initial and the
continuation stages: the exact reflected identities and their linearization
on the bad sets.

\begin{lemma}
\label{lem:reflected-identities}
For $x\in\Sigma_\lambda\setminus\{P_\lambda\}$ define
\begin{equation*}
 \mathcal K_\lambda(x,y)=\Phi(x-y)-\Phi(x^\lambda-y),
 \qquad y\in\Sigma_\lambda.
\end{equation*}
Then $\mathcal K_\lambda(x,y)>0$, and
\begin{align}
 U(x)-U_\lambda(x)
 &=\int_{\Sigma_\lambda}\mathcal K_\lambda(x,y)
   \bigl[H_1(U,V)(y)-H_1(U_\lambda,V_\lambda)(y)\bigr]\dd y,
 \label{eq:exact-reflection-U}\\
 V(x)-V_\lambda(x)
 &=d\bigl[\Phi(x)-\Phi(x^\lambda)\bigr] \notag\\
 &\quad+\int_{\Sigma_\lambda}\mathcal K_\lambda(x,y)
   \bigl[H_2(U,V)(y)-H_2(U_\lambda,V_\lambda)(y)\bigr]\dd y.
 \label{eq:exact-reflection-V}
\end{align}
Moreover, $\Phi(x)-\Phi(x^\lambda)<0$.
\end{lemma}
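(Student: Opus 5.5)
The plan is to derive both identities from the global representations $U=I_2H_1(U,V)$ (identity \eqref{eq:Kelvin-U-I2Z}) and $V=d\Phi+I_2H_2(U,V)$ (Lemma~\ref{lem:Bocher-pair}), split over $\Sigma_\lambda$ and its reflection. The kernel facts are elementary. For $x,y\in\Sigma_\lambda$ we have $|x-y|<|x^\lambda-y|$, because reflection across $\{x_1=\lambda\}$ preserves distances and $x,y$ lie strictly on the same side. Since $\Phi$ is radially decreasing, $\mathcal K_\lambda(x,y)>0$. In the same way, the origin lies in the complement $\{x_1>\lambda\}$ (as $\lambda<0$), so $|x|>|x^\lambda|$ for $x\in\Sigma_\lambda$; this gives $\Phi(x)<\Phi(x^\lambda)$, which is the final sign claim. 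The point $x=P_\lambda$ is excluded because there $x^\lambda=0$ and $V_\lambda$, $U_\lambda$ are undefined.

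For the identities I would fix $x\in\Sigma_\lambda\setminus\{P_\lambda\}$ and write
\begin{equation*}
 U(x)=\int_{\Sigma_\lambda}\Phi(x-y)H_1(U,V)(y)\dd y
 +\int_{\Sigma_\lambda}\Phi(x-y^\lambda)H_1(U,V)(y^\lambda)\dd y,
\end{equation*}
using the change of variables $y\mapsto y^\lambda$ on the complementary half-space $\{y_1>\lambda\}$. The hyperplane has measure zero, and the puncture $0$ lies in that half-space, where $H_1(U,V)\in L^1_{\mathrm{loc}}$ by Lemma~\ref{lem:Kelvin-potentials}. Next I evaluate $U$ at $x^\lambda$, which is a nonzero point, with the same splitting. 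Then I use $|x-y^\lambda|=|x^\lambda-y|$ and $|x^\lambda-y^\lambda|=|x-y|$, and recognize $H_1(U,V)(y^\lambda)=H_1(U_\lambda,V_\lambda)(y)$. Subtracting the two expressions gives
\begin{equation*}
 U(x)-U_\lambda(x)=\int_{\Sigma_\lambda}\bigl[\Phi(x-y)-\Phi(x^\lambda-y)\bigr]\bigl[H_1(U,V)(y)-H_1(U_\lambda,V_\lambda)(y)\bigr]\dd y,
\end{equation*}
which is \eqref{eq:exact-reflection-U}. The $V$ identity follows in the same way, with the additional atom term $d\Phi(x)-d\Phi(x^\lambda)$ carried along.

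The only delicate step is justifying the subtraction, since the integrals can't be split freely unless they are finite. All four integrals are finite. Each is a piece of a finite nonnegative potential evaluated at $x$ or at $x^\lambda$, both of which differ from $0$, and these potentials are finite there because $U$ and $V$ are finite at nonzero points (Lemmas~\ref{lem:Kelvin-potentials} and \ref{lem:Bocher-pair}). After subtracting and regrouping, the integrand is absolutely integrable, so the combined form is legitimate. I expect the main care to be in ensuring that the reflected integral near $y=P_\lambda$ is finite: $H_2(U_\lambda,V_\lambda)$ may have a Bôcher-type singularity at $P_\lambda$. It is nonetheless integrable against $\Phi(x^\lambda-\cdot)$, because it is the reflection of a density integrable against $\Phi(x-\cdot)$ near the origin.
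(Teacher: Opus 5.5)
Your proposal is correct and follows the paper's proof. You split each potential over $\Sigma_\lambda$ and its reflected half-space, change variables $y\mapsto y^\lambda$, subtract the expressions at $x$ and $x^\lambda$, and read off the signs from $|x-y|<|x^\lambda-y|$ and $|x^\lambda|<|x|$. Your added bookkeeping, that all four half-space integrals are finite pieces of the potentials at the nonzero points $x$ and $x^\lambda$, is exactly the justification for the subtraction that the paper leaves implicit.
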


\begin{proof}
Split each Newton-potential integral over $\Sigma_\lambda$ and its reflected
half-space, and make the change of variables $y\mapsto y^\lambda$ in the
latter.  This gives \eqref{eq:exact-reflection-U} and
\eqref{eq:exact-reflection-V}.  The sign assertions follow from
$|x-y|<|x^\lambda-y|$ for $x,y\in\Sigma_\lambda$ and
$|x^\lambda|<|x|$.
\end{proof}

Define the bad sets and positive parts
\begin{equation*}
 \begin{gathered}
 \Omega^U_\lambda=\{x\in\Sigma_\lambda:U(x)>U_\lambda(x)\},
 \qquad
 \Omega^V_\lambda=\{x\in\Sigma_\lambda:V(x)>V_\lambda(x)\},\\
 D_\lambda=(U-U_\lambda)_+,
 \qquad
 E_\lambda=(V-V_\lambda)_+.
 \end{gathered}
\end{equation*}

\begin{lemma}
\label{lem:bad-linearization}
Extend all functions below by zero outside their indicated bad sets.  There
are measurable coefficients $c_{ij}\geq0$ such that
\begin{align*}
 D_\lambda&\leq I_2(E_\lambda+c_{11}D_\lambda),
\\
 E_\lambda&\leq I_2(c_{21}D_\lambda+c_{22}E_\lambda),
\end{align*}
where, on $\Omega^U_\lambda$,
\begin{align}
 c_{11}&=Aq\,\xi^{q-1},
 \label{eq:c11}\\
 c_{21}&=qA(q-1)V\,\eta_1^{q-2}
       +qA^2(2q-1)\eta_2^{2q-2},
 \label{eq:c21}
\end{align}
and, on $\Omega^V_\lambda$,
\begin{equation}
 c_{22}=qA U_\lambda^{q-1}.
 \label{eq:c22}
\end{equation}
Here $\xi,\eta_1,\eta_2$ lie between $U$ and $U_\lambda$.
\end{lemma}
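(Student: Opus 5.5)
I will start from the exact reflected identities of Lemma~\ref{lem:reflected-identities} and estimate the integrands pointwise from above on the bad sets. The Bôcher contribution $d[\Phi(x)-\Phi(x^\lambda)]$ is nonpositive for $x\in\Sigma_\lambda$, since $d\geq0$ and $\Phi(x)-\Phi(x^\lambda)<0$. So this term may simply be dropped when bounding $V-V_\lambda$ from above. Since $\mathcal K_\lambda>0$, any pointwise upper bound on $H_i(U,V)-H_i(U_\lambda,V_\lambda)$ by a nonnegative function $F_i$ on $\Sigma_\lambda$ yields $U-U_\lambda\leq\int_{\Sigma_\lambda}\mathcal K_\lambda F_1$, and similarly for $V$. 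Moreover $\mathcal K_\lambda(x,y)\leq\Phi(x-y)$. Because $F_i\geq0$, it follows that $(U-U_\lambda)(x)\leq I_2F_1(x)$ with $F_1$ extended by zero off $\Sigma_\lambda$. Since the right side is nonnegative, the same bound holds for the positive part $D_\lambda$, and likewise for $E_\lambda$.

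The pointwise bounds come from monotonicity and the mean value theorem. For $H_1$ I write
\[H_1(U,V)-H_1(U_\lambda,V_\lambda)=(V-V_\lambda)+A(U^q-U_\lambda^q).\]
The first term is at most $E_\lambda$. The second term is at most $0$ off $\Omega^U_\lambda$ and equals $Aq\xi^{q-1}D_\lambda$ on $\Omega^U_\lambda$, which gives $c_{11}$. For $H_2(s,t)=qAs^{q-1}t+qA^2s^{2q-1}$ I split
\[H_2(U,V)-H_2(U_\lambda,V_\lambda)=qA\bigl(U^{q-1}-U_\lambda^{q-1}\bigr)V+qAU_\lambda^{q-1}(V-V_\lambda)+qA^2\bigl(U^{2q-1}-U_\lambda^{2q-1}\bigr).\]
The middle term is at most $qAU_\lambda^{q-1}E_\lambda$, which is $c_{22}E_\lambda$ supported on $\Omega^V_\lambda$. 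Since $q>1$ and $2q-1>1$, the two power differences are nonpositive off $\Omega^U_\lambda$. On $\Omega^U_\lambda$ the mean value theorem gives $(q-1)\eta_1^{q-2}D_\lambda$ and $(2q-1)\eta_2^{2q-2}D_\lambda$, with $V>0$ multiplying the first. This produces exactly $c_{21}$.

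The only delicate points are measure-theoretic. First, the puncture $0$ and its reflection $P_\lambda$ have measure zero. The integrands are locally integrable by Lemma~\ref{lem:Kelvin-potentials}, so the identities hold away from $P_\lambda$. Second, when $q<2$ (that is, $n\geq9$) the exponent $q-2$ is negative, and $\eta_1^{q-2}$ could blow up. I expect this to be the main point to check, but it is harmless here: $\eta_1$ lies between two positive values, so $c_{21}$ is finite and measurable wherever it is used. The integrability of the resulting products is not needed for the inequalities themselves, because both sides are nonnegative integrals and may be infinite. Measurability of $\xi,\eta_1,\eta_2$ is obtained by writing the coefficients explicitly as difference quotients, for instance $c_{11}=A(U^q-U_\lambda^q)/(U-U_\lambda)$ on $\Omega^U_\lambda$.
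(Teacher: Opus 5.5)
Your proposal is correct and follows the paper's proof: you discard the B\^ocher term by its sign and use $0<\mathcal K_\lambda(x,y)\leq\Phi(x-y)$ once the nonlinear differences are bounded by nonnegative functions. Your three-term splitting of $H_2(U,V)-H_2(U_\lambda,V_\lambda)$ is exactly the paper's rectangular decomposition through the intermediate value $H_2(U_\lambda,V)$, with the mean value theorem producing $c_{11}$, $c_{21}$ and $c_{22}$; your remarks on the null point $P_\lambda$, the negative exponent $q-2$ for $n\geq9$, and measurability via difference quotients fill in details the paper leaves implicit.
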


\begin{proof}
Take the positive part of \eqref{eq:exact-reflection-U} and use
$0<\mathcal K_\lambda(x,y)\leq\Phi(x-y)$.  The mean-value theorem applied to
$As^q$ gives \eqref{eq:c11}.  For $H_2$, use the rectangular decomposition
\begin{align*}
 H_2(U,V)-H_2(U_\lambda,V_\lambda)
 &=[H_2(U,V)-H_2(U_\lambda,V)]\\
 &\quad+[H_2(U_\lambda,V)-H_2(U_\lambda,V_\lambda)].
\end{align*}
The first bracket can be positive only on $\Omega^U_\lambda$ and gives
\eqref{eq:c21}; the second can be positive only on $\Omega^V_\lambda$ and
gives \eqref{eq:c22}.  The point-mass term is discarded because it has the
favorable sign found in Lemma~\ref{lem:reflected-identities}.
\end{proof}

The preceding inequalities are the common pointwise input for the entire
moving-plane argument.  We now state the symmetry result and organize its
proof around the smallness of the three coefficient norms.

\begin{proposition}
\label{prop:moving-planes}
Let $(U,V)$ be a positive solution of \eqref{eq:cooperative-system} satisfying
\eqref{eq:U-tail}, \eqref{eq:V-tail}, the exact potential identity for $U$ in
\eqref{eq:Kelvin-U-I2Z}, and the representation \eqref{eq:V-Bocher}.  Then
either $0$ is removable for $(U,V)$, or both $U$ and $V$ are radial about $0$.
\end{proposition}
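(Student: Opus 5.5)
We run the integral moving-plane method of Chen--Li--Ou in the $x_1$-direction, with Lemma~\ref{lem:bad-linearization} as the only pointwise input. Since $e_1$ is arbitrary, the conclusion in every direction gives radial symmetry. The key estimate is a Hardy--Littlewood--Sobolev bound on the bad sets. We measure $D_\lambda$ in $L^{s_1}(\Sigma_\lambda)$ and $E_\lambda$ in $L^{s_2}(\Sigma_\lambda)$, with exponents compatible with the two decay rates, for instance $s_2=n/(n-2)+\delta$ and $1/s_1=1/s_2-2/n$. By \eqref{eq:U-tail} and \eqref{eq:V-tail}, together with boundedness of $U,V$ away from $0$ and the $L^1_{\mathrm{loc}}$ bound at the puncture, these norms are finite on $\Sigma_\lambda$ away from small balls around $P_\lambda$. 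The bad sets in fact avoid a neighbourhood of $P_\lambda$: if $d>0$, then $V_\lambda\geq d\Phi(\cdot-P_\lambda)$ blows up there, and $U_\lambda\gtrsim|x-P_\lambda|^{4-n}$ by the argument of Lemma~\ref{lem:Bocher-pair}. HLS applied to the system in Lemma~\ref{lem:bad-linearization} gives
\[
\|D_\lambda\|_{s_1}\lesssim \|E_\lambda\|_{s_2}+\|c_{11}\|_{L^{n/2}(\Omega^U_\lambda)}\|D_\lambda\|_{s_1},
\]
\[
\|E_\lambda\|_{s_2}\lesssim \|c_{21}\|_{L^{\tau}(\Omega^U_\lambda)}\|D_\lambda\|_{s_1}+\|c_{22}\|_{L^{n/2}(\Omega^V_\lambda)}\|E_\lambda\|_{s_2},
\]
where $\tau=n/4$ matches $s_1$ to $s_2$. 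If all three coefficient norms are small, we substitute the first inequality into the second and conclude $D_\lambda=E_\lambda=0$.

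\textbf{Step 1 (start).} For $\lambda\ll0$ we use the tails. On $\Omega^U_\lambda$ we have $\xi\leq U$ with $U\lesssim|x|^{4-n}$ far out, so $c_{11}\lesssim|x|^{-4}$ and its $L^{n/2}$ norm over $\Sigma_\lambda$ tends to $0$. Similarly $c_{22}\leq qAU_\lambda^{q-1}$ with $|x^\lambda|\leq|x|$, but $U_\lambda$ may be large near $P_\lambda$. This is where the bad-set structure enters: on $\Omega^V_\lambda$ near $P_\lambda$, either $V_\lambda$ is large (atom case) or we use \eqref{eq:U-tail} for $U_\lambda$ together with $V>V_\lambda$. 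The cleanest route is to show first that for $\lambda\ll0$ the bad sets lie in $\{|x|\gg1\}$, because $U_\lambda,V_\lambda$ are bounded below on a large ball containing $P_\lambda$ while $U,V$ are small there. Then $c_{21}\lesssim V U^{q-2}+U^{2q-2}$, which decays like $|x|^{2-n+(4-n)(q-2)}$; its $L^{n/4}$ norm near infinity is small. (When $n\geq9$ we have $q<2$, so $\eta_1^{q-2}$ is a negative power and must be controlled by $U_\lambda$, using $\eta_1\geq U_\lambda$ on $\Omega^U_\lambda$ and the tail bound for $U_\lambda$.) This gives $D_\lambda=E_\lambda=0$ for all $\lambda\leq\lambda_0$.

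\textbf{Step 2 (continuation).} Let $\lambda^*=\sup\{\lambda<0:\ D_\mu=E_\mu=0\ \forall\mu\leq\lambda\}$. If $\lambda^*<0$, we show $U>U_{\lambda^*}$ and $V>V_{\lambda^*}$ strictly. This follows from the exact identities \eqref{eq:exact-reflection-U}--\eqref{eq:exact-reflection-V} with $\mathcal K_{\lambda^*}>0$, unless $U\equiv U_{\lambda^*}$. The latter is impossible because the puncture $0$ is not reflected to itself: the two sides have different singular behaviour, or different tail coefficients from \eqref{eq:U-tail}. Strict inequality plus the uniform tail control lets the coefficient norms stay small for $\lambda$ slightly beyond $\lambda^*$. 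The bad sets concentrate on a set of small measure together with a neighbourhood of infinity, and we use dominated convergence. This contradicts the maximality of $\lambda^*$, so $\lambda^*=0$. Hence $U\leq U_0$ and $V\leq V_0$ on $\{x_1<0\}$, and the opposite direction gives symmetry about every hyperplane through $0$.

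\textbf{Step 3 (dichotomy).} If $0$ is not removable, then $U$ is unbounded near $0$ by Lemma~\ref{lem:Bocher-pair}, and the above yields radial symmetry of $U$ and $V$. If $U$ is bounded, Lemma~\ref{lem:Bocher-pair} gives removability. The main obstacle is Step 1's localization of the bad sets away from $P_\lambda$ and the treatment of the negative power $\eta_1^{q-2}$ for $n\geq9$. There I would try first to bound $V\eta_1^{q-2}\leq VU_\lambda^{q-2}$ and exploit $V\lesssim|x|^{2-n}$ together with a lower tail bound $U_\lambda\gtrsim|x^\lambda|^{4-n}$.
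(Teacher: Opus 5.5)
Your route is the paper's own. It uses the same HLS contraction on the bad sets, with $c_{11},c_{22}\in L^{n/2}$ and $c_{21}\in L^{n/4}$; the paper's pair $L^{2q},L^2$ is just the member $s_2=2$ of your family. It has the same start for $\lambda\ll0$: exclude a large ball about $P_\lambda$ from the bad sets, i.e.\ force $|x^\lambda|\gg1$ there, and use $\eta_1\geq U_\lambda\gtrsim|x^\lambda|^{4-n}$ for the negative power when $n\geq9$. It also uses nonremovability in the same way to exclude $U\equiv U_{\lambda^*}$.

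The gap is in the continuation step, near the reflected puncture $P_\lambda$. For $\lambda$ close to $\lambda^*<0$ your Step~1 localization is unavailable, because $U,V$ are no longer uniformly small on $\Sigma_\lambda$. The compact bad sets may therefore approach $P_\lambda$, where $U_\lambda$ is unbounded in the nonremovable case. Your argument that the bad sets avoid a neighbourhood of $P_\lambda$ covers only $d>0$. Your suggestion to control $U_\lambda$ there through \eqref{eq:U-tail} does not apply, since $x^\lambda$ is then near $0$, not near infinity. Nor does \emph{dominated convergence} have a dominating function: $c_{22}^{n/2}=(qA)^{n/2}U_\lambda^{2q}$. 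Nothing in the hypotheses excludes a Fowler-type rate $U\sim|x|^{-(n-4)/2}$ at the puncture, and then $U^{2q}\sim|x|^{-n}$. So $\|c_{22}\|_{L^{n/2}}$ over a full neighbourhood of $P_\lambda$ would be infinite, and vanishing measure of the bad sets alone does not give smallness.

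What is missing is the observation that the bad-set inequalities themselves bound the coefficients uniformly, as the paper does.
On $\Omega^V_\lambda\cap B_R$ you have $AU_\lambda^q\leq V_\lambda<V\leq C_R$. This uses $V\geq AU^q$ from Lemma~\ref{lem:cooperative-system}, and the fact that $\Sigma_\lambda\cap B_R$ stays at distance $|\lambda|$ from $0$. Hence $c_{22}\leq C_R$.
On $\Omega^U_\lambda\cap B_R$ you have $c(R)\leq U_\lambda<U\leq C_R$, with the lower bound coming from the positive representation $U=I_2H_1(U,V)$. This bounds $c_{11}$ and $c_{21}$, including $\eta_1^{q-2}$ when $n\geq9$.
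Bounded coefficients on sets of vanishing measure then give the required smallness.

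Two smaller corrections:
\begin{enumerate}
\item The strict inequalities at $\lambda^*$ should read $U<U_{\lambda^*}$ and $V<V_{\lambda^*}$.
\item Your alternative reason, different tail coefficients, does not exclude $U\equiv U_{\lambda^*}$. The leading terms of $U$ and $U_{\lambda^*}$ in \eqref{eq:U-tail} agree, and the $r^{3-n}$ terms only force $\partial_1u(0)=(n-4)\lambda^*u(0)$. That is exactly the situation of a bubble $u$ not centred at the inversion point, whose Kelvin transform is a bubble symmetric about a plane $x_1=\lambda^*\neq0$. Only nonremovability rules this case out.
\end{enumerate}
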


\begin{proof}
If $U$ is bounded near $0$, Lemma~\ref{lem:Bocher-pair} gives removability.
We may therefore consider the nonremovable case.

\medskip
\noindent\emph{Step 1. A contraction criterion on the bad sets.}
We first isolate the analytic criterion used both to start the plane and to
continue it past a putative limiting position.  Suppose
$D_\lambda\in L^{2q}$ and $E_\lambda\in L^2$.  The
Hardy--Littlewood--Sobolev mappings \cite{Lieb83}
\begin{align*}
 I_2&:L^2\longrightarrow L^{2q},\\
 I_2&:L^{2n/(n+4)}\longrightarrow L^2
\end{align*}
are valid precisely because $n>4$.  Applying them to the inequalities in
Lemma~\ref{lem:bad-linearization}, followed by H\"older with
\begin{equation*}
 \frac12=\frac2n+\frac1{2q},
 \qquad
 \frac{n+4}{2n}=\frac4n+\frac1{2q},
\end{equation*}
gives
\begin{align*}
 \|D_\lambda\|_{L^{2q}}
 &\leq C\|E_\lambda\|_{L^2}
 +C\|c_{11}\|_{L^{n/2}(\Omega^U_\lambda)}
       \|D_\lambda\|_{L^{2q}},
\\
 \|E_\lambda\|_{L^2}
 &\leq C\|c_{21}\|_{L^{n/4}(\Omega^U_\lambda)}
       \|D_\lambda\|_{L^{2q}}
 +C\|c_{22}\|_{L^{n/2}(\Omega^V_\lambda)}
       \|E_\lambda\|_{L^2}.
\end{align*}
After absorbing the two diagonal terms, the second inequality bounds
$\|E_\lambda\|_2$ by a small multiple of $\|D_\lambda\|_{2q}$.  Substitution
into the first inequality gives the following criterion: there exists
$\varepsilon_0=\varepsilon_0(n)>0$ such that
\begin{equation}
 \|c_{11}\|_{L^{n/2}(\Omega^U_\lambda)}
 +\|c_{22}\|_{L^{n/2}(\Omega^V_\lambda)}
 +\|c_{21}\|_{L^{n/4}(\Omega^U_\lambda)}
 <\varepsilon_0
 \label{eq:HLS-smallness}
\end{equation}
forces
\begin{equation*}
 D_\lambda=E_\lambda=0.
\end{equation*}
Thus it remains only to make the three coefficient norms in
\eqref{eq:HLS-smallness} sufficiently small on the relevant bad sets.

\medskip
\noindent\emph{Step 2. Starting the plane.}
We verify the criterion of Step~1 when $\lambda$ is sufficiently negative.
Since $\Sigma_\lambda\subset\R^n\setminus B_{|\lambda|}$, it is enough to
establish coefficient bounds on the far-field parts of the bad sets,
uniformly in $\lambda$.

\emph{Claim.}
There exist \(R_0,C>0\), independent of \(\lambda<0\), such that on $(\Omega_\lambda^U\cup\Omega_\lambda^V)\setminus B_{R_0}$, for $r=|x|$,
\begin{equation*}
c_{11}+c_{22}\le Cr^{-4},
\qquad
c_{21}\le C\bigl(r^{-6}+r^{-8}\bigr).
\end{equation*}
Consequently,
\begin{equation}
 \begin{split}
 \sup_{\lambda<0}\bigl(
 &\|c_{11}\|_{L^{n/2}(\Omega^U_\lambda\setminus B_R)}
 +\|c_{22}\|_{L^{n/2}(\Omega^V_\lambda\setminus B_R)}\\
 &+\|c_{21}\|_{L^{n/4}(\Omega^U_\lambda\setminus B_R)}
 \bigr)\longrightarrow0
 \end{split}
 \label{eq:tail-norm-small}
\end{equation}
as $R\to\infty$.

To prove the claim, first assume $x\in\Omega^U_\lambda$. Put \(\rho=|x^\lambda|\le r\) and fix a large
threshold \(R_1\). We first prove that there exists $R_0>0$ independent of $\lambda$, such that for $r>R_0$, we must have $\rho>R_1$. Indeed, suppose the contrary, then the exact positive
potential representation
\begin{equation*}
U=I_2H_1(U,V)
\end{equation*}
gives a uniform positive lower bound $U(x^\lambda)\geq c(R_1)$, whereas
\eqref{eq:U-tail} gives \(U(x)\to0\) as
\(r\to\infty\). Hence, after increasing \(R_0\) if necessary,
this contradicts the definition of $\Omega^U_\lambda$.

Now we have $\rho>R_1$, and hence both $r$ and $\rho$ are large, the uniform angular
version of \eqref{eq:U-tail} and the inequality $U(x)>U(x^\lambda)$ imply
\begin{equation*}
 r^{4-n}(1+C/r)>\rho^{4-n}(1-C/\rho).
\end{equation*}
After increasing $R_0$,
\begin{equation*}
 1\leq\frac r\rho\leq1+\frac C\rho,
 \qquad
 C^{-1}\leq\frac{U_\lambda(x)}{U(x)}\leq C.
\end{equation*}
Consequently $c_{11}=O(r^{-4})$.  Moreover, every intermediate value between
$U$ and $U_\lambda$ is comparable with $r^{4-n}$.  This is essential when
$n\geq9$, because then $q-2<0$. Consequently,
\begin{align*}
 V\eta_1^{q-2}
 &=O(r^{2-n})O\bigl(r^{(4-n)(q-2)}\bigr)=O(r^{-6}),\\
 \eta_2^{2q-2}&=O(r^{-8}).
\end{align*}
This proves the estimates for $c_{11}$ and $c_{21}$ in every dimension
$n\geq5$.

On $\Omega^V_\lambda$, the same argument applies: the positive
representation \eqref{eq:V-Bocher} gives a uniform lower bound for $V$ on
each fixed punctured ball, while \eqref{eq:V-tail} controls the far region.
The inequality $V(x)>V(x^\lambda)$ and the two-sided form of
\eqref{eq:V-tail} first give
\begin{equation*}
 1\leq r/\rho\leq1+C/\rho.
\end{equation*}
This comparison of radii allows \eqref{eq:U-tail} to be used at the reflected point:
\begin{equation*}
 U_\lambda^{q-1}
 =O\bigl(\rho^{(4-n)(q-1)}\bigr)=O(r^{-4}).
\end{equation*}
Thus $c_{22}=O(r^{-4})$. 

\eqref{eq:tail-norm-small} now follows immediately, completing the proof of the claim.

If $\lambda\ll0$, then $|x|\geq|\lambda|$ on $\Sigma_\lambda$.  Moreover,
\begin{equation*}
 D_\lambda\leq U\in L^{2q}(\Sigma_\lambda),
 \qquad
 E_\lambda\leq V\in L^2(\Sigma_\lambda)
\end{equation*}
by \eqref{eq:U-tail}--\eqref{eq:V-tail}.  It follows from
\eqref{eq:tail-norm-small} that the three coefficient norms in
\eqref{eq:HLS-smallness} are smaller than $\varepsilon_0$ for all
sufficiently negative $\lambda$.  Step~1 therefore gives
\begin{equation*}
 U_\lambda\geq U,
 \qquad V_\lambda\geq V
 \quad\hbox{in }\Sigma_\lambda
\end{equation*}
for every sufficiently negative $\lambda$.

\medskip
\noindent\emph{Step 3. The limiting position and the strong alternative.}
By Step~2, the set in the following definition is nonempty, while $0$ is an
upper bound:
\begin{equation*}
 \lambda_0
 =
 \sup\left\{
 \lambda<0:
 \begin{array}{l}
 U_\mu\geq U,\ V_\mu\geq V\ \hbox{in }\Sigma_\mu, 
 \text{ for every }\mu\leq\lambda
 \end{array}
 \right\}.
\end{equation*}
Hence $\lambda_0$ is well defined.  By pointwise continuity away from the
reflected puncture,
\begin{equation*}
 U_{\lambda_0}\geq U,
 \qquad
 V_{\lambda_0}\geq V
 \quad\hbox{in }\Sigma_{\lambda_0}\setminus\{P_{\lambda_0}\}.
\end{equation*}

Put
\begin{equation*}
 \varphi=U_{\lambda_0}-U,
 \qquad
 \psi=V_{\lambda_0}-V.
\end{equation*}
By \eqref{eq:exact-reflection-U} and \eqref{eq:exact-reflection-V},
$\varphi,\psi\geq0$ are represented by the positive kernel
$\mathcal K_{\lambda_0}$ applied to the corresponding nonnegative nonlinear
differences; moreover, $d[\Phi(x^{\lambda_0})-\Phi(x)]\geq0$.

Since both $H_1$ and $H_2$ are strictly increasing in each positive
variable, either
\begin{equation}
 \varphi\equiv\psi\equiv0,
 \qquad\hbox{or}\qquad
 \varphi>0,\ \psi>0
 \quad\hbox{in }\Sigma_{\lambda_0}\setminus\{P_{\lambda_0}\}.
 \label{eq:strong-alternative}
\end{equation}
Indeed, suppose first that $\varphi\not\equiv0$.  Then
$\varphi>0$ on a set of positive measure.  The inequality
$H_2(U_{\lambda_0},V_{\lambda_0})>H_2(U,V)$ holds on this set because
\(H_2\) is strictly increasing in its first variable and
\(V_{\lambda_0}\ge V\).  Hence \eqref{eq:exact-reflection-V}, together
with \(\mathcal K_{\lambda_0}>0\) and
$d\bigl[\Phi(x^{\lambda_0})-\Phi(x)\bigr]\ge0$, gives
\begin{equation*}
\psi>0
\qquad\text{in }\Sigma_{\lambda_0}\setminus\{P_{\lambda_0}\}.
\end{equation*}
Since \(H_1\) is strictly increasing in its second variable, \eqref{eq:exact-reflection-U} then
gives
\begin{equation*}
\varphi>0
\qquad\text{in }\Sigma_{\lambda_0}\setminus\{P_{\lambda_0}\}.
\end{equation*}
The same argument with \(\varphi\) and \(\psi\) interchanged applies if
\(\psi\not\equiv0\). Therefore either
\begin{equation*}
\varphi\equiv\psi\equiv0,
\end{equation*}
or
\begin{equation*}
\varphi>0,\qquad \psi>0
\qquad\text{in }\Sigma_{\lambda_0}\setminus\{P_{\lambda_0}\}.
\end{equation*}

\medskip
\noindent\emph{Step 4. The plane cannot stop strictly at a negative
position.}
Assume $\lambda_0<0$ and that the strict alternative in
\eqref{eq:strong-alternative} holds.  We show that the contraction criterion
of Step~1 remains valid immediately to the right of $\lambda_0$.
For every fixed $\lambda<0$, the puncture lies outside $\Sigma_\lambda$,
and
\begin{equation*}
     D_\lambda\leq U,\qquad E_\lambda\leq V.
\end{equation*}
Thus \eqref{eq:U-tail}--\eqref{eq:V-tail}, together with local smoothness,
give
\begin{equation*}
    D_\lambda\in L^{2q}(\Sigma_\lambda),
 \qquad
 E_\lambda\in L^2(\Sigma_\lambda). 
\end{equation*}

Now choose $R$ so large that the uniform tail contribution in
\eqref{eq:tail-norm-small} is below $\varepsilon_0/2$.  We claim that, for
this fixed $R$, as $\lambda\downarrow\lambda_0$,
\begin{equation*}
 \|c_{11}\|_{L^{n/2}(\Omega^U_\lambda\cap B_R)}
 +\|c_{22}\|_{L^{n/2}(\Omega^V_\lambda\cap B_R)}
 +\|c_{21}\|_{L^{n/4}(\Omega^U_\lambda\cap B_R)}
 \longrightarrow0.
\end{equation*}

By the strict inequalities at \(\lambda_0\) and continuity in \(\lambda\),
the compact bad sets shrink to a null set; hence
\begin{equation*}
|\Omega_\lambda^U\cap B_R|
+
|\Omega_\lambda^V\cap B_R|
\longrightarrow0
\qquad\text{as }\lambda\downarrow\lambda_0.
\end{equation*}
It remains only to bound the coefficients uniformly on the compact bad
sets.  The possible difficulty is that the reflected point may approach the
puncture, and, when \(q-2<0\), the factor in \(c_{21}\) could in principle
become singular. 

The exact positive representation $U=I_2H_1(U,V)$
gives, for every fixed \(R'>0\),
\begin{equation*}
U(x)\ge c(R')>0
\qquad (0<|x|\le R').
\end{equation*}
Hence, on \(\Omega_\lambda^U\cap B_R\), both \(U\) and \(U_\lambda\)
remain between positive constants, uniformly for
\(\lambda\downarrow\lambda_0\).  Thus \(c_{11}\) and \(c_{21}\) are
uniformly bounded there.  On \(\Omega_\lambda^V\cap B_R\),
$V_\lambda<V\le C_R$ and $V_\lambda\ge AU_\lambda^q$,
so \(U_\lambda\le C_R\), and hence \(c_{22}\) is uniformly bounded as well.

Together with the vanishing measure of the compact bad sets, the absolute
continuity gives
\begin{equation*}
\|c_{11}\|_{L^{n/2}(\Omega_\lambda^U\cap B_R)}
+
\|c_{22}\|_{L^{n/2}(\Omega_\lambda^V\cap B_R)}
+
\|c_{21}\|_{L^{n/4}(\Omega_\lambda^U\cap B_R)}
\longrightarrow0.
\end{equation*}
Combining this with the far-field estimate \eqref{eq:tail-norm-small}, the criterion of Step~1 applies
for every \(\lambda>\lambda_0\) sufficiently close to \(\lambda_0\), which
contradicts the definition of \(\lambda_0\).

\medskip
\noindent\emph{Step 5. The puncture and the conclusion.}
If $\lambda_0<0$, Step~4 leaves the only alternative:
\begin{equation*}
 U_{\lambda_0}\equiv U,
 \qquad
 V_{\lambda_0}\equiv V.
\end{equation*}
Reflection across $x_1=\lambda_0$ sends the puncture $0$ to the regular point
\begin{equation*}
 P_{\lambda_0}=2\lambda_0e_1.
\end{equation*}
For $y$ near $0$, exact symmetry identifies $(U,V)$ at $y$ with the
reflection of the smooth pair near $P_{\lambda_0}$.  Thus $(U,V)$ extends
smoothly across $0$, contradicting the nonremovable branch under
consideration.  Therefore $\lambda_0=0$.

Repeating the argument from the opposite direction gives symmetry across
$\{x_1=0\}$.  Since the coordinate direction was arbitrary, both
$U$ and $V$ are radial about $0$.
\end{proof}

\section{Spherical compactification and the admissible classification}
Under the fast decay condition, we can classify the solutions.
\begin{proposition}
\label{prop:fast-bubble}
Suppose $u$ satisfies the hypotheses of Theorem~\ref{thm:main} and
\begin{equation*}
 u(x)\leq C|x|^{4-n}
 \qquad(|x|\gg1).
\end{equation*}
Then $u$ is one of the bubbles \eqref{eq:bubble}.
\end{proposition}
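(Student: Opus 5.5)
Under the decay hypothesis, the Kelvin transform $U(x)=|x|^{4-n}u(x/|x|^2)$ satisfies $U\le C$ near $0$. Lemma~\ref{lem:Bocher-pair} then shows that $0$ is removable for $(U,V)$, so $U$ extends to a smooth positive function on all of $\R^n$. The plan is to transport the solution to the round sphere. Let $\pi:\Sn\setminus\{N\}\to\R^n$ be stereographic projection and write $U_{0,1}=(2/(1+|x|^2))^{(n-4)/2}$, so that $g_{\Sn}=U_{0,1}^{4/(n-4)}|dx|^2$. Define
\begin{equation*}
 \phi=\frac{u}{U_{0,1}}\circ\pi ,\qquad \tilde g=\phi^{\frac4{n-4}}g_{\Sn}=\pi^*\bigl(u^{\frac4{n-4}}|dx|^2\bigr).
\end{equation*}
Near infinity we have $U_{0,1}(x)\sim 2^{(n-4)/2}|x|^{4-n}$. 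Hence $\phi$ near the north pole $N$ agrees, up to a smooth positive factor, with $U$ composed with the inversion chart. Since $U$ is smooth and positive at $0$, $\phi$ extends to a smooth positive function on all of $\Sn$.

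Next, identify the geometry of $\tilde g$. By the conformal transformation laws recalled in the introduction, $\tilde g$ satisfies $Q_{\tilde g}=\frac{n^2-4}{8(n-1)}R_{\tilde g}$ on $\Sn\setminus\{N\}$, and by continuity on all of $\Sn$. Admissibility gives $w>0$ away from the constant case; indeed Corollary~\ref{cor:z-lower} and $T>0$ yield $w>0$ via the exact formula for $w$ (alternatively, $w=-\Delta(u^p)$ with $u^p=I_2w$ and the strong minimum principle). Therefore $R_{\tilde g}>0$ on $\Sn\setminus\{N\}$. At $N$ I would check positivity using the Kelvin-side quantity $W$, since $R_{\tilde g}$ is a positive multiple of $\Phi$-weighted $W$ in the chart. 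By Lemma~\ref{lem:Bocher-pair} with $d=0$, the function $-\Delta(U^p)=W$ is smooth there, and it is positive by the strong minimum principle applied to the smooth superharmonic function $U^p$ (it is nonconstant, because $U\to0$ at infinity). Thus $(\Sn,\tilde g)$ is a smooth metric in the conformal class of the round sphere with $R_{\tilde g}>0$ and constant quotient $Q_{\tilde g}/R_{\tilde g}=\frac{n^2-4}{8(n-1)}$.

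We then invoke the Obata-type rigidity theorem of Ge--Wang--Wei \cite{GeWangWei26}: on the round conformal class, such a metric is the pullback of $c\,g_{\Sn}$ by a conformal diffeomorphism. The normalization fixes the scale. Since the quotient is invariant under constant scaling only in a way compatible with \eqref{eq:main}, I would check directly that $\tilde g$ is isometric to $g_{\Sn}$ up to the scaling absorbed by $\lambda$. Pulling back through $\pi$, the metric $u^{4/(n-4)}|dx|^2$ is $\Psi^*g_{\Sn}$ for a Möbius map $\Psi$, which forces $u=U_{\xi,\lambda}$. A possible constant factor is excluded because $c\,U_{\xi,\lambda}$ solves \eqref{eq:main} only for $c=1$: the equation is homogeneous of degree $2p-1=q$ on the right versus $1$ on the left.

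The main obstacle I expect is matching the precise hypotheses of the Ge--Wang--Wei rigidity theorem. It may require only $R>0$ and the constant quotient, or it may require an integral or Einstein-type assumption. If it is stated only for Einstein backgrounds with a given metric in the conformal class, I would apply it with background $g_{\Sn}$ and conformal metric $\tilde g$, which is the intended use. The secondary technical point is the positivity of $R_{\tilde g}$ at $N$, which I handle through smoothness and the strong minimum principle for the extended Kelvin quantities.
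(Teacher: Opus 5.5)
Your overall route is the paper's: removability via Lemma~\ref{lem:Bocher-pair}, stereographic compactification, the Obata-type theorem of \cite{GeWangWei26}, and homogeneity to fix the constant. The gap is the step that produces $R_{\tilde g}>0$, which is exactly the input the rigidity theorem needs, and neither of your justifications for it works.

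First, $T>0$ says $z>Ku^q$. By contrast, $w=pu^{p-1}\bigl(z-(p-1)|\nabla u|^2/u\bigr)$ is positive precisely where $z>(p-1)|\nabla u|^2/u$. These inequalities are unrelated, so Corollary~\ref{cor:z-lower} gives no control of the zero set of $w$. Second, the strong minimum principle for the nonconstant superharmonic function $U^p$ only rules out an interior minimum of $U^p$. It says nothing about whether $W=-\Delta(U^p)$ vanishes at $0$, or anywhere else. At that stage you only know $R_{\tilde g}\geq0$ on $\Sn$ (at $N$ by continuity) and $R_{\tilde g}\not\equiv0$.

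The missing ingredient is the one the paper uses: the Gursky--Malchiodi strong maximum principle, recalled as Theorem~2.1 of \cite{GeWangWei26}. On a closed manifold it upgrades $R\geq0$, $R\not\equiv0$ and semipositive $Q$ to $R>0$. Here $Q_{\tilde g}$ is a positive multiple of $R_{\tilde g}$, so it applies, and it treats $\Sn\setminus\{N\}$ and $N$ simultaneously. Concretely, Branson's formula $Q=-\frac1{2(n-1)}\Delta R-\frac{2}{(n-2)^2}|\mathrm{Ric}|^2+a_nR^2$ (with a dimensional constant $a_n$) and $Q\geq0$ give
\begin{equation*}
 -\Delta_{\tilde g}R+2(n-1)a_nR\cdot R\geq0 .
\end{equation*}
Hopf's strong minimum principle then applies to the nonnegative function $R$, whose zeroth-order coefficient is bounded, so $R>0$.

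Alternatively, you can use Proposition~\ref{prop:weak-gradient-upgrade}. Its proof does not use Section~6, and it applies because $w\geq0$ forces $z\geq0$. It yields $w\geq pKu^{(n+2)/(n-4)}$, i.e.\ $R_g\geq\frac{4(n-1)}{n-2}pK$ uniformly on $\R^n$, and this bound passes to $N$ by continuity. With either repair, the rest of your argument goes through.
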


\begin{proof}
The Kelvin transform $U$ is bounded near zero.  The Kelvin pair $(U,V)$ was
constructed in Section~4, so the bounded
removability statement in Lemma~\ref{lem:Bocher-pair} applies directly: both
$U$ and $V$, and hence $Z=V+AU^q$, extend smoothly and positively across the
origin.

Stereographic compactification now produces a smooth positive conformal factor
on $\Sn$.  The scalar-curvature quantity is nonnegative everywhere and not
identically zero; the $Q$-curvature quantity is likewise semipositive.  The
Gursky--Malchiodi maximum principle, as recalled in Theorem 2.1 of
\cite{GeWangWei26}, upgrades the scalar curvature to strict positivity.  Theorem
3.2 of \cite{GeWangWei26} shows that the metric is a constant multiple of a
M\"obius pullback of the round metric.  In our normalization, this gives exactly
\eqref{eq:bubble} in Euclidean coordinates.
\end{proof}

\subsection{Completion of the admissible classification}

\begin{proof}[Proof of Theorem~\ref{thm:main}]
If $w\equiv0$, then $u^p$ is a positive entire harmonic function, and hence
$u$ is constant.  We may therefore assume $w\not\equiv0$, as was done from
Section~2 onward.

Fix two distinct points $a,b\in\R^n$.  The Kelvin
construction of Section~4 and Proposition~\ref{prop:moving-planes} may be
applied with either point as the inversion center.  For
$\xi\in\{a,b\}$, write
\begin{equation*}
 U_\xi(x)
 =|x|^{4-n}u\left(\xi+\frac{x}{|x|^2}\right),
 \qquad x\in\R^n\setminus\{0\},
\end{equation*}
and let $V_\xi$ be the corresponding auxiliary function.

Suppose first that the puncture is removable for one of the pairs
$(U_\xi,V_\xi)$.  Then $U_\xi$ is bounded near zero and, for $y\ne\xi$,
\begin{equation*}
 u(y)=|y-\xi|^{4-n}
 U_\xi\left(\frac{y-\xi}{|y-\xi|^2}\right).
\end{equation*}
It follows that $u(y)=O(|y|^{4-n})$, as $|y|\to\infty$. Proposition~\ref{prop:fast-bubble} therefore gives a standard bubble.

It remains to rule out the case in which both punctures are nonremovable.
Proposition~\ref{prop:moving-planes} then implies that $U_a$ and $U_b$ are
radial about zero.  Therefore, $u$ is radial about both $a$ and $b$.  If $x$ does not lie on the line through
$a$ and $b$, then
\begin{equation*}
 \nabla u(x)\parallel x-a,
 \qquad
 \nabla u(x)\parallel x-b.
\end{equation*}
The two directions are not parallel, so $\nabla u(x)=0$.  Since the complement
of that line is dense and $u\in C^1(\R^n)$, it follows that
$\nabla u\equiv0$, contradicting $w\not\equiv0$.

Thus at least one of the two Kelvin punctures is removable, and the preceding
argument proves that $u$ is a standard bubble.  Under the strict condition
$w>0$, the constant alternative is excluded.
\end{proof}

\section{\texorpdfstring{Automatic admissibility from $-\Delta u\geq0$}
{Automatic admissibility from weak superharmonicity}}
\label{sec:weak-superharmonic-upgrade}

We now prove Theorem~\ref{thm:weak-superharmonic}.  This section does not
assume a sign for $w=-\Delta(u^p)$.  We recover the positivity of $w$ for
nonconstant solutions from $-\Delta u\geq0$.

We first introduce the following standard local \(L^s\)-to-\(L^\infty\)
estimate for nonnegative subharmonic functions, including the range
\(0<s<1\).
\begin{lemma}[Local $L^s$ bound for subharmonic functions]
\label{lem:subharmonic-small-p}
If $f\geq0$ is distributionally subharmonic in $B_R$ and $s>0$, then
\begin{equation}
 \sup_{B_{R/2}}f
 \leq C_{n,s}\left(\fint_{B_R}f^s\right)^{1/s}.
 \label{eq:subharmonic-small-p}
\end{equation}
Here and below a distributionally subharmonic function is identified with its
upper-semicontinuous representative.
\end{lemma}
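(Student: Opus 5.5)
The plan has two stages: first handle $s\geq1$ with the sub-mean-value property, then reach $0<s<1$ through a dyadic-radii iteration that trades a small power of the supremum for the $L^s$ norm. By scaling we may take $R=1$. Since $f$ is nonnegative and distributionally subharmonic, its upper-semicontinuous representative satisfies the sub-mean-value inequality on balls. For $x\in B_{1/2}$ and $\rho<1/2$ this gives $f(x)\leq\fint_{B_\rho(x)}f$.

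\emph{Case $s\geq1$.} Take $\rho=1/2$, so $B_{1/2}(x)\subset B_1$. Then
\begin{equation*}
 f(x)\leq\fint_{B_{1/2}(x)}f\leq 2^n\fint_{B_1}f,
\end{equation*}
and H\"older's inequality on $B_1$ gives $\fint_{B_1}f\leq(\fint_{B_1}f^s)^{1/s}$. This proves \eqref{eq:subharmonic-small-p} with $C_{n,s}=2^n$.

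\emph{Case $0<s<1$.} Work on the nested radii $1/2\leq r<\rho\leq1$ and set $M(r)=\sup_{B_r}f$. First assume $M(\rho)<\infty$; this holds, for example, when $\rho<1$, since an upper-semicontinuous function is bounded above on compact sets. For $x\in B_r$ the ball $B_{\rho-r}(x)$ lies in $B_\rho$, and the $s=1$ estimate gives
\begin{equation*}
 f(x)\leq\fint_{B_{\rho-r}(x)}f\leq\frac{C_n}{(\rho-r)^n}\int_{B_\rho}f
 \leq\frac{C_n}{(\rho-r)^n}M(\rho)^{1-s}\int_{B_1}f^s.
\end{equation*}
Young's inequality with exponents $1/(1-s)$ and $1/s$ then yields
\begin{equation*}
 M(r)\leq\tfrac12M(\rho)+C_{n,s}(\rho-r)^{-n/s}\Bigl(\int_{B_1}f^s\Bigr)^{1/s}.
\end{equation*}
The standard iteration lemma (Giaquinta's lemma for bounded nondecreasing functions) absorbs the term $\tfrac12M(\rho)$, giving $M(1/2)\leq C_{n,s}(\int_{B_1}f^s)^{1/s}$. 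The iteration needs $M$ bounded on the whole interval $[1/2,\rho_1]$. To guarantee this, run it on $[1/2,1-\delta]$, where $M$ is finite, and then let $\delta\to0$; the constants are uniform in $\delta$ after rescaling.

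The main obstacle is this finiteness requirement in the absorption step. The delicate point is making sure the iteration is applied only on a range where $M$ is finite.
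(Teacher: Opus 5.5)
Your proposal is correct and follows essentially the paper's route: the sub-mean-value inequality on balls of radius $\rho-r$, the interpolation $\int_{B_\rho}f\le M(\rho)^{1-s}\int_{B_1}f^s$, Young's inequality to absorb half of $M(\rho)$, and iteration over radii. Your restriction of the iteration to $[1/2,1-\delta]$, where upper semicontinuity makes $M$ finite, supplies a detail the paper leaves implicit (a single fixed $\delta=1/4$ already suffices, with no need to let $\delta\to0$). The paper passes to distributional subsolutions by mollification, whereas you cite the sub-mean-value property of the upper-semicontinuous representative; either works.
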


\begin{proof}
For $s\geq1$ this is the usual local subsolution estimate.  For $0<s<1$,
let $M(\rho)=\sup_{B_\rho}f$.  The mean-value inequality on balls of radius
comparable to $t-r$ gives, for $R/2\leq r<t\leq R$,
\begin{align*}
 M(r)&\leq C(t-r)^{-n}\int_{B_t}f\\
     &\leq C(t-r)^{-n}M(t)^{1-s}\int_{B_R}f^s.
\end{align*}
Young's inequality absorbs one half of $M(t)$; iteration over radii
then gives \eqref{eq:subharmonic-small-p}.  Convolution with standard
mollifiers on slightly smaller balls gives the same
statement for distributional subsolutions.
\end{proof}
Now we prove that $T>0$ under the weak condition $-\Delta u\geq 0$.
\begin{lemma}
\label{lem:weak-T-normality}
Assume only that $u>0$ solves \eqref{eq:main} and
$z=-\Delta u\geq0$.  If $u$ is nonconstant, then
\begin{equation}
 \inf_{\R^n}u=0,
 \qquad u=I_2z,
 \qquad T:=z-Ku^q>0.
 \label{eq:weak-T-positive}
\end{equation}
Moreover, for every fixed center $x_0$ and $R\geq1$,
\begin{equation}
 \int_{B_R(x_0)}u\leq C_{x_0}R^{(n+4)/2},
 \qquad
 \int_{B_R(x_0)}z\leq C_{x_0}R^{n/2}.
 \label{eq:weak-scale-bounds}
\end{equation}
\end{lemma}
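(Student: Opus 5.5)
The plan is to work directly with the quantity $T=z-Ku^q$. First I would check that the identity
\begin{equation*}
 -\Delta T=\cN p(p-1)u^{q-2}|\nabla u|^2\geq0
\end{equation*}
from Corollary~\ref{cor:z-lower} is purely algebraic. Its derivation only expands $-\Delta z=\cN u^{p-1}w$ with $w=pu^{p-1}(z-(p-1)|\nabla u|^2/u)$ and uses $Kq=\cN p$, so it holds without any sign on $w$. Hence $T$ is superharmonic, and for every constant $c$ the function $(T+c)_-$ is subharmonic by Kato's inequality.

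Since $z\geq0$, $u$ is positive superharmonic. The argument of Lemma~\ref{lem:riesz-decomposition} therefore gives
\begin{equation*}
 u=\ell+I_2z,\qquad \ell=\inf_{\R^n}u\geq0.
\end{equation*}
The shell formula \eqref{eq:Newton-shell-mean} shows that the spherical means, and hence the ball means, of $u-\ell=I_2z$ about any center tend to $0$.

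The key step is to show $T\geq-K\ell^q$ in both cases $\ell=0$ and $\ell>0$ at once. Set $S=(T+K\ell^q)_-$, which is subharmonic. Since $z\geq0$,
\begin{equation*}
 S\leq K(u^q-\ell^q)\leq C\bigl((u-\ell)+(u-\ell)^q\bigr),
\end{equation*}
where $C$ depends on $\ell$. I would apply Lemma~\ref{lem:subharmonic-small-p} with a small exponent $s\leq1/q$. Using $(a+b)^s\leq a^s+b^s$ and Jensen's inequality,
\begin{equation*}
 \sup_{B_{R/2}(x_0)}S\leq C\Bigl[\Bigl(\fint_{B_R(x_0)}(u-\ell)\Bigr)^{s}+\Bigl(\fint_{B_R(x_0)}(u-\ell)\Bigr)^{qs}\Bigr]^{1/s},
\end{equation*}
and the right-hand side tends to $0$ as $R\to\infty$. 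Thus $S\equiv0$, that is, $T\geq-K\ell^q$.

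If $\ell>0$, this gives
\begin{equation*}
 -\Delta(u-\ell)=z\geq K(u^q-\ell^q)\geq Kq\ell^{q-1}(u-\ell),
\end{equation*}
with $u-\ell>0$. Barta's inequality then contradicts $\lambda_1(B_R)\to0$, exactly as in Lemma~\ref{lem:normality}. Hence $\ell=0$, $u=I_2z$, and $T\geq0$. If $T$ vanished somewhere, the strong minimum principle would force $T\equiv0$, so $\nabla u\equiv0$, contradicting nonconstancy. Therefore $T>0$.

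For \eqref{eq:weak-scale-bounds}, I would translate to $x_0=0$ and set $m(R)=\int_{B_R}z$. On $B_R$ we have
\begin{equation*}
 u\geq I_2(z\mathbf1_{B_R})\geq c_nR^{2-n}m(R).
\end{equation*}
Combining this with $z\geq Ku^q$ gives
\begin{equation*}
 m(R)\geq K\int_{B_R}u^q\geq cR^{n+(2-n)q}m(R)^q.
\end{equation*}
Since $(n-2)q-n=2n/(n-4)$ and $q-1=4/(n-4)$, this yields $m(R)\leq CR^{n/2}$. For $\int_{B_R}u$, I would split $z=z\mathbf1_{B_{2R}}+z\mathbf1_{\R^n\setminus B_{2R}}$. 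The near part contributes at most $CR^2m(2R)\lesssim R^{(n+4)/2}$. The far part contributes at most $R^n\sum_{k\geq1}(2^kR)^{2-n}m(2^{k+1}R)$, which is $\lesssim R^{(n+4)/2}$ because $n/2<n-2$.

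I expect the main obstacle to be the lower bound $T\geq-K\ell^q$ when $\ell>0$. The estimate $S\lesssim(u-\ell)$ is only linear near $u=\ell$, so the ordinary exponent $s=1/q$ fails there. The point is that Lemma~\ref{lem:subharmonic-small-p} with a small exponent $s$ lets Jensen act on $u-\ell$ itself, whose averages tend to zero with no a priori decay rate.
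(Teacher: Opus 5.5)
Your proof is correct, but it takes a different route from the paper in two places.

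\textbf{The lower bound $T\geq -K\ell^q$.} The paper works in two stages.
\begin{enumerate}
\item It applies Lemma~\ref{lem:subharmonic-small-p} with $s=1/q$ to $T^-\leq Ku^q$. Using only the mean-value inequality $\fint_{B_R}u\leq u(0)$, it obtains $M=\sup T^-<\infty$.
\item It takes a second global Riesz decomposition of the nonnegative superharmonic function $M-T^-$, whose infimum is zero. Concavity of $t\mapsto t^{1/q}$ and the convergence of the spherical means of $u$ to $\ell$ then sharpen the bound to $M\leq K\ell^q$.
\end{enumerate}
You build the sharp threshold into the subharmonic function from the start. You apply the same lemma once to $(T+K\ell^q)_-\leq K(u^q-\ell^q)\lesssim (u-\ell)+(u-\ell)^q$. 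Jensen's inequality then turns the decay of the ball averages of $u-\ell=I_2z$ directly into $(T+K\ell^q)_-\equiv0$. This treats $\ell=0$ and $\ell>0$ simultaneously. It avoids both the a priori boundedness step and the second potential-theoretic argument for $M-T^-$. The price is only the elementary inequality for $u^q-\ell^q$.

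\textbf{The scale bounds \eqref{eq:weak-scale-bounds}.} The paper derives the radial inequality $-(R^{n-1}f')'\geq KR^{n-1}f^q$ for the spherical means $f$. It integrates this to get $f(R)\lesssim R^{-(n-4)/2}$, and then controls $\int_{B_R}z$ through the monotone flux $R^{n-1}(-f')$. Your mass comparison $m(R)\geq cR^{n+(2-n)q}m(R)^q$, combined with the dyadic far-field sum, is more elementary. It also gives constants independent of $x_0$ and valid for all $R>0$.

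\textbf{A correction to your closing remark.} The exponent $s=1/q$ does not fail. With $s=1/q$, the two resulting powers $(u-\ell)^{1/q}$ and $(u-\ell)^{1}$ both have exponent at most one, so Jensen still applies. Your argument works for every $s\leq 1/q$, which is what you actually wrote in the proof.
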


\begin{proof}
The general Riesz argument in Lemma~\ref{lem:riesz-decomposition}, applied
only to the positive superharmonic function $u$, gives
\begin{equation*}
 u=\ell+I_2z,
 \qquad \ell=\inf_{\R^n}u\geq0.
\end{equation*}
By \eqref{eq:Newton-shell-mean}, the spherical mean of $u$ about every fixed
center tends to $\ell$.

A direct computation gives
\begin{equation*}
 -\Delta T
 =qK(p-1)u^{q-2}|\nabla u|^2
 \geq0.
\end{equation*}

Kato's positive-part inequality shows that $T^-$ is a
nonnegative distributionally subharmonic function.  Since $z\geq0$,
\begin{equation*}
 0\leq T^-\leq Ku^q.
\end{equation*}
Applying Lemma~\ref{lem:subharmonic-small-p} with exponent
$1/q\in(0,1)$ and using the superharmonicity of $u$, we obtain
\begin{align*}
 \sup_{B_{R/2}}T^-
 &\leq C\left(\fint_{B_R}(T^-)^{1/q}\right)^q\\
 &\leq CK\left(\fint_{B_R}u\right)^q\\
 &\leq CKu(0)^q.
\end{align*}
Letting $R\to\infty$ gives $M:=\sup_{\R^n}T^-<\infty$.

The function $M-T^-$ is nonnegative and superharmonic, with $\inf (M-T^-)=0$.
Its global Riesz decomposition therefore has no positive harmonic constant,
so \eqref{eq:Newton-shell-mean} gives
\begin{equation*}
 \fint_{\partial B_R}(M-T^-)\longrightarrow0.
\end{equation*}
Because $0<1/q<1$,
\begin{equation*}
 0\leq M^{1/q}-(T^-)^{1/q}\leq(M-T^-)^{1/q}.
\end{equation*}
Concavity and Jensen's inequality show that the spherical mean of
$(M-T^-)^{1/q}$ tends to zero.  Hence the spherical mean of $(T^-)^{1/q}$ tends to
$M^{1/q}$.  But $(T^-)^{1/q}\leq K^{1/q}u$ and the spherical mean of $u$
tends to $\ell$, whence
\begin{equation}
 M\leq K\ell^q.
 \label{eq:weak-M-bound}
\end{equation}
It follows that
\begin{equation*}
 z=T+Ku^q\geq-M+Ku^q\geq K(u^q-\ell^q).
\end{equation*}

If $u$ is nonconstant and $\ell>0$, then
$h=u-\ell=I_2z>0$ and convexity gives
\begin{equation*}
 -\Delta h=z\geq Kq\ell^{q-1}h.
\end{equation*}
Barta's inequality on $B_R$ would imply
$\lambda_1(B_R)\geq Kq\ell^{q-1}>0$ for every $R$, contrary to
$\lambda_1(B_R)\to0$.  Therefore $\ell=0$, and
\eqref{eq:weak-M-bound} gives $M=0$.  Thus $T\geq0$.  If $T$ vanished at one
point, the strong maximum principle would give
$T\equiv0$ and then $\nabla u\equiv0$, contrary to nonconstancy.  This proves
\eqref{eq:weak-T-positive}.

It remains to prove the scale bounds.  Let $f(R)$ be the spherical mean of
$u$ about $x_0$.  From $z\geq Ku^q$, Jensen's inequality gives
\begin{equation*}
 -\bigl(R^{n-1}f'(R)\bigr)'
 \geq KR^{n-1}f(R)^q.
\end{equation*}
The function $f$ is decreasing, so integration from $0$ to $R$ yields
\begin{equation*}
 -f'(R)\geq \frac Kn Rf(R)^q.
\end{equation*}
Consequently
\begin{equation}
 f(R)\leq C_{x_0}R^{-2/(q-1)}
 =C_{x_0}R^{-(n-4)/2}.
 \label{eq:weak-spherical-decay}
\end{equation}
If $\mathcal M(R)=R^{n-1}(-f'(R))$, then $\mathcal M$ is nondecreasing and
\begin{align*}
 f(R)-f(2R)
 &=\int_R^{2R}\mathcal M(t)t^{1-n}\dd t\\
 &\geq cR^{2-n}\mathcal M(R).
\end{align*}
Since $\int_{B_R(x_0)}z=|\mathbb S^{n-1}|\mathcal M(R)$,
\eqref{eq:weak-spherical-decay} gives the second estimate in
\eqref{eq:weak-scale-bounds}; integrating $f$ gives the first.
\end{proof}
Now we improve the estimate of $z$ by a gradient term in order to prove the
positivity of $w$.
\begin{proposition}
\label{prop:weak-gradient-upgrade}
Every nonconstant solution in Lemma~\ref{lem:weak-T-normality} satisfies
\begin{equation}
 z\geq Ku^q+(p-1)\frac{|\nabla u|^2}{u}.
 \label{eq:weak-gradient-upgrade}
\end{equation}
Consequently $w\geq pK u^{p+q-1}>0$.
\end{proposition}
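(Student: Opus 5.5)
The plan follows the bootstrap described in the introduction. Fix $\varepsilon>0$ and set $v=u+\varepsilon$. For $b\in[0,p-1]$ define
\begin{equation*}
 w_b=\Delta u+b\frac{|\nabla u|^2}{v}+Ku^q .
\end{equation*}
The aim is to show $w_b\leq0$ for every $b\in[0,p-1]$. The base case $b=0$ is exactly $T=z-Ku^q>0$ from Lemma~\ref{lem:weak-T-normality}. Suppose $w_{b_0}\leq0$. I will move $b$ upward in steps of a fixed size $\delta$ (independent of $\varepsilon$), and at $b=p-1$ let $\varepsilon\to0$. This gives \eqref{eq:weak-gradient-upgrade}. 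The consequence is then algebraic: from the expansion of $w$ in Section~2,
\begin{equation*}
 w=pu^{p-1}\bigl(z-(p-1)|\nabla u|^2/u\bigr)\geq pKu^{p+q-1}.
\end{equation*}

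\textbf{Differential inequality.} The key computation is $\Delta(v^{-b}w_b)\geq0$ on the set $\{w_b>0\}$. To get it, expand $\Delta(|\nabla u|^2/v)$ using Bochner's formula $\Delta|\nabla u|^2=2|D^2u|^2+2\nabla u\cdot\nabla\Delta u$. Eliminate $\Delta^2u$ through \eqref{eq:main}, written as
\begin{equation*}
 \Delta z=-\cN p u^{q-1}z+\cN p(p-1)u^{q-2}|\nabla u|^2 .
\end{equation*}
Bound $|D^2u|^2$ from below by the refined Kato/Cauchy--Schwarz inequality. On $\{w_b>0\}$, substitute $\Delta u>-b|\nabla u|^2/v-Ku^q$. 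The weight $v^{-b}$ is chosen so that the cross terms $\nabla u\cdot\nabla w_b$ cancel or combine into a nonnegative quadratic form. The remaining terms should reduce to a polynomial in $|\nabla u|^2/v$ and $u^{q}$, with coefficients depending on $b$ and $n$. Its nonnegativity for $0\le b\le p-1$ is the identity to verify; the endpoint $b=p-1$ is where the exact cancellation $Kq=\cN p$ is used. The $\varepsilon$-shift keeps the quotients smooth, and it produces error terms of the form $u/v\le1$ with the favorable sign. If the inequality holds only when $w_{b'}\le 0$ is already known for a slightly smaller $b'$, this is exactly why the argument is an iteration in $b$.

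\textbf{Global elimination.} By Kato's inequality, $f=(v^{-b}w_b)_+$ is a nonnegative distributionally subharmonic function. Lemma~\ref{lem:subharmonic-small-p} with $s=1$ gives
\begin{equation*}
 \sup_{B_{R/2}}f\leq CR^{-n}\int_{B_R}f .
\end{equation*}
Since $\Delta u\le0$, we have $f\leq \varepsilon^{-b}\bigl(b|\nabla u|^2/v+Ku^q\bigr)$. Bounding $|\nabla u|^2/v$ directly is awkward. A better route is to use the inductive hypothesis $w_{b_0}\le0$, which gives
\begin{equation*}
 (b-b_0)\frac{|\nabla u|^2}{v}\le z\quad\text{on }\{w_b>0\}, \qquad\text{so}\qquad f\lesssim \varepsilon^{-b}(z+u^q),
\end{equation*}
and $u^q\lesssim z$ by $T>0$. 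Then \eqref{eq:weak-scale-bounds} gives $\int_{B_R}f\lesssim R^{n/2}$, so $\sup_{B_{R/2}}f\lesssim R^{-n/2}\to0$. Hence $f\equiv0$, i.e.\ $w_b\le0$. The introduction instead mentions a bound on $\int_{B_R}|\nabla u|$. If needed, I would obtain it from the Newton representation $u=I_2z$ by differentiating under the integral and using \eqref{eq:weak-scale-bounds}, and combine it with an interpolation in the $L^s$ form of Lemma~\ref{lem:subharmonic-small-p} for some $s<1$.

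\textbf{Main obstacle.} The hard step is the pointwise computation showing $\Delta(v^{-b}w_b)\ge0$ on $\{w_b>0\}$. This requires choosing the correct Hessian lower bound and checking that the resulting quadratic form in $\bigl(\Delta u,\ |\nabla u|^2/v,\ u^q\bigr)$ is nonnegative all the way up to $b=p-1$. I would first carry out the computation for radial functions, where $|D^2u|^2$ is explicit, to identify the sharp weight, and then justify the general case via the inequality $|D^2u|^2\ge(\Delta u)^2/n$ combined with the gradient-direction estimate.
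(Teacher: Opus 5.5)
Your framework (the family $w_b$, the weight $v^{-b}$, Kato's inequality and a mean-value bound for the positive part) matches the paper's. However, the step you call the main obstacle is the heart of the proof, and the specific features you commit to there do not hold.

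The lower bound $\Delta u>-b|\nabla u|^2/v-Ku^q$ from $w_b>0$ is not what makes $|D^2u|^2\ge(\Delta u)^2/n$ useful. What is needed is an \emph{upper} bound on the trace, and it comes from the previous iterate $w_c\le0$. The drift $L_b=\Delta-\frac{2b}{v}\nabla u\cdot\nabla$ induced by $v^{-b}$ cancels the third-order terms and leaves $\frac{2b}{v}|A|^2$, where $A=D^2u-(1+b)\frac{du\otimes du}{v}$. On $\{w_b>0\}$, the hypothesis $w_c\le0$ gives $\operatorname{tr}A\le-(1+b+c)|\nabla u|^2/v-Ku^q<0$ and $0<w_b\le(b-c)|\nabla u|^2/v$.

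The resulting coefficient of $|\nabla u|^4/v^4$ in $v^{b-1}\Delta(v^{-b}w_b)$ is $\frac{2b}{n}(1+b+c)^2-b^2(1+2b)$. Since $\frac2n(1+2\alpha)=\alpha$ for $\alpha=p-1=\frac{2}{n-4}$, this coefficient is strictly negative at $b=\alpha$ for every $c<\alpha$. So no fixed step $\delta$ works, and $b=p-1$ is never reached in finitely many steps: the admissible increment $b-c$ shrinks to zero as $c\uparrow\alpha$.

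The paper instead takes $b_{j+1}\in(b_j,\alpha)$ to be the root of $\frac2n(1+b+b_j)^2=b(1+2b)$. It then checks that the coefficient of $u^q|\nabla u|^2/v^3$ is positive. That coefficient is decreasing in $\theta=u/v\in(0,1]$ and positive at $\theta=1$ because $b<\alpha$. This yields $b_j\uparrow\bar b$ with $\frac2n(1+2\bar b)=\bar b$, i.e.\ $\bar b=\alpha$. The limit $j\to\infty$ is taken at fixed $\varepsilon$, and only then $\varepsilon\downarrow0$. Without this mechanism, your proposal does not yet prove the key inequality.

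In the elimination step, the bound $(b-b_0)|\nabla u|^2/v\le z$ on $\{w_b>0\}$ does not follow from $w_{b_0}\le0$. What does follow is $w_b\le(b-b_0)|\nabla u|^2/v$ there, together with $b_0|\nabla u|^2/v\le z$.

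Once $b_0>0$, your $s=1$ argument, using $f\le\varepsilon^{-b}\frac{b-b_0}{b_0}z$ and $\int_{B_R}z\lesssim R^{n/2}$, is valid and slightly simpler than the paper's. It fails at the first step $b_0=0$, where $|\nabla u|^2$ is not controlled pointwise by $z$.

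There you need your fallback, which is what the paper does at every step. It uses $f^{1/2}\le C_\varepsilon|\nabla u|$ on $\{f>0\}$ and Lemma~\ref{lem:subharmonic-small-p} with $s=1/2$. It also uses $\int_{B_R}|\nabla u|\lesssim R\int_{B_{2R}}z+R^{-1}\int_{B_{2R}}u\lesssim R^{(n+2)/2}$. This last bound comes from splitting $u=I_2(z\mathbf 1_{B_{2R}})+I_2(z\mathbf 1_{\R^n\setminus B_{2R}})$. The first piece is handled by the gradient of the Newton kernel, and the second, positive harmonic piece by the interior gradient estimate.
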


\begin{proof}
For this proof, set
\begin{equation*}
 \alpha:=p-1=\frac{2}{n-4}.
\end{equation*}
The equation expanded without any sign assumption on $w$ is
\begin{equation}
 \Delta^2u+qK u^{q-1}\Delta u
 +qK\alpha u^{q-2}|\nabla u|^2=0.
 \label{eq:weak-fourth-expanded}
\end{equation}
Fix $\varepsilon>0$, put $v=u+\varepsilon$, and define
\begin{equation*}
 w_b=\Delta u+b\frac{|\nabla u|^2}{v}+Ku^q,
 \qquad b\geq0.
\end{equation*}
The inequality $w_b\leq0$ is equivalent to
\begin{equation*}
 z\geq Ku^q+b\frac{|\nabla u|^2}{v}.
\end{equation*}
Thus the desired estimate is obtained by increasing the coefficient $b$ from
$0$ to $\alpha$.  Lemma~\ref{lem:weak-T-normality} gives the starting point
$w_0=-T<0$.  We shall bootstrap $w_b\leq0$ by eliminating the positive set of $w_b$ with a global Liouville argument.

\smallskip
\noindent\emph{Claim 1.}
Suppose $0\leq c<\alpha$ and $w_c\leq0$.  Then there exists
$b\in(c,\alpha)$ such that
\begin{equation*}
 \Delta(v^{-b}w_b)\geq0
 \qquad\hbox{on }\Omega_b:=\{w_b>0\}.
\end{equation*}

To prove the claim, take for the moment an arbitrary $b\in(c,\alpha)$.  The weight
$v^{-b}$ naturally leads to the drift operator
\begin{equation*}
 L_b=\Delta-\frac{2b}{v}\nabla u\cdot\nabla,
\end{equation*}
which also cancels the third-order terms in the Bochner calculation.
Bochner's identity and \eqref{eq:weak-fourth-expanded} give the exact formula
\begin{equation}
\begin{aligned}
 L_bw_b={}&\frac{2b}{v}
 \left|D^2u-(1+b)\frac{du\otimes du}{v}\right|^2
 -\frac{2b^2(1+b)|\nabla u|^4}{v^3}
 -\frac{b|\nabla u|^2}{v^2}\Delta u \\
 &\quad+qK\left(\alpha-2b\frac uv\right)u^{q-2}|\nabla u|^2,
\end{aligned}
\label{eq:weak-Lbwb}
\end{equation}
and direct differentiation of the weight gives
\begin{equation}
 \Delta(v^{-b}w_b)
 =v^{-b}\left\{
 L_bw_b-\frac{bw_b}{v}\Delta u
 +\frac{b(b+1)w_b}{v^2}|\nabla u|^2
 \right\}.
 \label{eq:weak-weighted-lap}
\end{equation}

On $\Omega_b$, the preceding iterate gives
\begin{equation}
 0<\frac{w_b}{v}
 \leq(b-c)\frac{|\nabla u|^2}{v^2},
 \label{eq:weak-positive-set-bound}
\end{equation}
and
\begin{equation*}
 \operatorname{tr}\left(D^2u-(1+b)\frac{du\otimes du}{v}\right)
 \leq-(1+b+c)\frac{|\nabla u|^2}{v}-Ku^q<0.
\end{equation*}
Using $|A|^2\geq(\operatorname{tr}A)^2/n$ in
\eqref{eq:weak-Lbwb}, substituting
$\Delta u/v=w_b/v-b|\nabla u|^2/v^2-Ku^q/v$ in
\eqref{eq:weak-weighted-lap}, and collecting terms gives
\begin{equation}
\begin{aligned}
 v^{b-1}\Delta(v^{-b}w_b)
 \geq{}&
 \left\{\frac{2b}{n}(1+b+c)^2-b^2(1+2b)\right\}
 \frac{|\nabla u|^4}{v^4}
 +\frac{2bK^2}{n}\frac{u^{2q}}{v^2} \\
 &+D_b\left(\frac uv\right)\frac{u^q|\nabla u|^2}{v^3}+b\frac{w_b}{v}\left(
 2b\frac{|\nabla u|^2}{v^2}
 +K\frac{u^q}{v}-\frac{w_b}{v}\right),
\end{aligned}
\label{eq:weak-key-lower}
\end{equation}
where
\begin{equation*}
 D_b(\theta)=\frac{4bK(1+b+c)}{n}+bK
 +qK\frac{\alpha-2b\theta}{\theta^2},
 \qquad 0<\theta\leq1.
\end{equation*}

We now choose $b\in(c,\alpha)$ so that the
quartic-gradient coefficient vanishes, namely
\begin{equation}
 \frac2n(1+b+c)^2=b(1+2b).
 \label{eq:weak-b-recurrence}
\end{equation}
Easy to verify that such a $b$ exists and is unique. 

For this choice of $b$, the first term in \eqref{eq:weak-key-lower} vanishes,
while \eqref{eq:weak-positive-set-bound} gives
\begin{equation*}
 2b\frac{|\nabla u|^2}{v^2}
 +K\frac{u^q}{v}-\frac{w_b}{v}
 \geq(b+c)\frac{|\nabla u|^2}{v^2}
 +K\frac{u^q}{v}>0.
\end{equation*}
It remains to verify \(D_b(u/v)>0\). Since \(b<\alpha\),
\begin{equation*}
 \frac{d}{d\theta}
 \left(\frac{\alpha-2b\theta}{\theta^2}\right)
 =
 \frac{2(b\theta-\alpha)}{\theta^3}<0,
 \qquad 0<\theta\le1.
\end{equation*}
Hence \(D_b(u/v)\ge D_b(1)\). By \eqref{eq:weak-b-recurrence} and
\(b<\alpha=2/(n-4)\),
\begin{equation*}
 \frac{4b(1+b+c)}{n}
 =\frac{4b\sqrt{b(1+2b)}}{\sqrt{2n}}>2b^2.
\end{equation*}
Therefore,
\begin{align*}
 \frac{D_b(1)}{K}>2b^2+b+q(\alpha-2b)
 =(\alpha-b)(1+2\alpha-2b)>0,
\end{align*}
where $q=1+2\alpha$.
All terms on the right of \eqref{eq:weak-key-lower} are therefore
nonnegative, proving Claim~1.

\smallskip
\noindent\emph{Claim 2.}
For the $b$ supplied by Claim~1, one has $w_b\leq0$ in $\R^n$.

Claim~1 and Kato's positive-part inequality give
\begin{equation*}
 \Delta(v^{-b}w_b)_+\geq0
 \qquad\hbox{in }\mathcal D'(\R^n).
\end{equation*}
Thus $(v^{-b}w_b)_+$ is a nonnegative entire subharmonic function.  On its
positive set, \eqref{eq:weak-positive-set-bound} implies
\begin{equation}
 \bigl[(v^{-b}w_b)_+\bigr]^{1/2}
 \leq C_{b,c,\varepsilon}|\nabla u|.
 \label{eq:weak-positive-part-gradient}
\end{equation}

We next use entire-space growth, which is the genuinely global part of the
argument.  Since $u=I_2z$, decompose for $R\geq1$
\begin{equation*}
 \mathcal P_R=I_2(z\mathbf1_{B_{2R}}),
 \qquad
 H_R=u-\mathcal P_R=I_2(z\mathbf1_{\R^n\setminus B_{2R}}).
\end{equation*}
The second function is positive and harmonic in $B_{2R}$.  Integrability of
the gradient of the Newton kernel and the interior gradient estimate for a
positive harmonic function give, more explicitly,
\begin{align*}
 \int_{B_R}|\nabla \mathcal P_R(x)|\,\dd x
 \leq C\int_{B_{2R}}z(y)
       \int_{B_R}|x-y|^{1-n}\,\dd x\,\dd y\leq CR\int_{B_{2R}}z,
\end{align*}
whereas the local $L^1$ gradient estimate for harmonic functions yields
\begin{align*}
 \int_{B_R}|\nabla H_R|
\leq CR^{-1}\int_{B_{2R}}H_R\leq CR^{-1}\int_{B_{2R}}u.
\end{align*}
Consequently,
\begin{equation*}
 \int_{B_R}|\nabla u|
 \leq C\left(
 R\int_{B_{2R}}z+R^{-1}\int_{B_{2R}}u
 \right)
 \leq CR^{(n+2)/2}=o(R^n).
\end{equation*}
For any fixed \(x\in\mathbb R^n\), take \(R>2|x|\). Applying
Lemma~\ref{lem:subharmonic-small-p} to \((v^{-b}w_b)_+\) on \(B_R(0)\)
with \(s=1/2\), and using \eqref{eq:weak-positive-part-gradient}, gives
\begin{align*}
 (v^{-b}w_b)_+(x)
 &\leq C
 \left(R^{-n}\int_{B_R}\bigl[(v^{-b}w_b)_+\bigr]^{1/2}\right)^2
 \leq C_{\varepsilon,b,c}
 \left(R^{-n}\int_{B_R}|\nabla u|\right)^2\\
 &\leq C_{\varepsilon,b,c}R^{-(n-2)}
 \longrightarrow0.
\end{align*}
Hence \((v^{-b}w_b)_+(x)=0\). Since \(x\) was arbitrary,
\((v^{-b}w_b)_+\equiv0\), and therefore \(w_b\leq0\), proving Claim~2.

\smallskip
Next we will do the iteration on $b$.
Starting from $b_0=0$, choose $b_{j+1}\in(b_j,\alpha)$ from
\eqref{eq:weak-b-recurrence} with $c=b_j$.  Claims~1 and~2 show inductively
that $w_{b_j}\leq0$ for every $j$, while
\begin{equation*}
 0=b_0<b_1<b_2<\cdots<\alpha.
\end{equation*}
Thus $b_j\to\bar b\leq\alpha$.  Passing to the limit in the defining relation
gives
\begin{equation*}
 \frac2n(1+2\bar b)^2=\bar b(1+2\bar b),
\end{equation*}
and hence $\bar b=2/(n-4)=\alpha$.  For fixed $\varepsilon$, let $j\to\infty$
in $w_{b_j}\leq0$ and then let $\varepsilon\downarrow0$.  We obtain
\begin{equation*}
 \Delta u+(p-1)\frac{|\nabla u|^2}{u}+Ku^q\leq0,
\end{equation*}
which is \eqref{eq:weak-gradient-upgrade}.  Finally,
\begin{equation*}
 w=p u^{p-1}\left(z-(p-1)\frac{|\nabla u|^2}{u}\right)
 \geq pK u^{p+q-1}>0.
\end{equation*}
\end{proof}

\begin{proof}[Proof of Theorem~\ref{thm:weak-superharmonic}]
Assume $u$ is not a constant.
Proposition~\ref{prop:weak-gradient-upgrade} gives $w>0$, and
Theorem~\ref{thm:main} gives the standard bubble.
\end{proof}

\section{Non-admissible radial scattering solutions}
\label{sec:nonadmissible}

This section studies positive solutions of \eqref{eq:main} without assuming
$w=-\Delta(u^p)>0$.  The conclusion is different: the constant
solutions are accompanied by a family of positive
radial scattering solutions.  The construction below is rigorous and does not
use the normal-potential representation from the admissible part of the paper.

\subsection{An exact second-order reduction}

Keep \(z=-\Delta u\) and set
\begin{equation*}
 \tau=-T=Ku^q-z.
\end{equation*}
We obtain
\begin{equation}
 \Delta u=\tau-Ku^q,\qquad
 \Delta\tau=c_np(p-1)u^{q-2}|\nabla u|^2.
 \label{eq:nonadm-system}
\end{equation}
Conversely, eliminating \(\tau\) from \eqref{eq:nonadm-system} recovers
\eqref{eq:main}. Thus \eqref{eq:nonadm-system} is an exact
second-order formulation of the original equation.

For a radial pair, \eqref{eq:nonadm-system} becomes
\begin{align}
 u''+\frac{n-1}{r}u'&=\tau-Ku^q,                          \label{eq:nonadm-radial-u}\\
 \tau''+\frac{n-1}{r}\tau'&=\cN p(p-1)u^{q-2}(u')^2.    \label{eq:nonadm-radial-tau}
\end{align}
Normalize the regular initial data by
\begin{equation}
 u(0)=1,\quad u'(0)=0,\quad u''(0)=-s,\qquad
 \tau(0)=K-ns,\quad\tau'(0)=0.                           \label{eq:nonadm-IVP}
\end{equation}
The Taylor expansion is
\begin{equation*}
 u(r)=1-\frac{s}{2}r^2+\frac{n-2}{32}s r^4+O(r^6).
\end{equation*}
Thus $s=0$ is the constant solution.  The normalized bubble corresponds to
$s=(n-4)/4$.

We first prove the standard local well-posedness and continuation
criterion for the radial system.
\begin{lemma}
\label{lem:nonadm-local-IVP}
Let $u_0>0$ and $\tau_0\in\mathbb R$.  The radial system
\eqref{eq:nonadm-radial-u}--\eqref{eq:nonadm-radial-tau}, with
\begin{equation*}
 u(0)=u_0,\qquad \tau(0)=\tau_0,\qquad u'(0)=\tau'(0)=0,
\end{equation*}
has a unique solution on a nontrivial interval $[0,r_0]$.  It is smooth and
even in $r$, and hence defines a smooth radial pair near the origin in
$\mathbb R^n$.  The solution continues uniquely as long as $u$ stays positive
and $(u,\tau,u',\tau')$ remains bounded on finite intervals.
\end{lemma}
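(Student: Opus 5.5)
The plan is to rewrite the singular radial system as a fixed-point problem for integral equations and apply the contraction mapping principle near $r=0$. For a radial function, $f''+\frac{n-1}{r}f'=g$ with $f'(0)=0$ is equivalent to
\begin{equation*}
 f(r)=f(0)+\int_0^r t^{1-n}\int_0^t \sigma^{n-1}g(\sigma)\dd\sigma\dd t .
\end{equation*}
Applying this to \eqref{eq:nonadm-radial-u} and \eqref{eq:nonadm-radial-tau} gives a map
\begin{equation*}
 \mathcal T(u,\tau)=\Bigl(u_0+\mathcal V[\tau-Ku^q],\ \tau_0+\mathcal V[\cN p(p-1)u^{q-2}(u')^2]\Bigr),
\end{equation*}
where $\mathcal V$ denotes the double integral above. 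Since $u'$ enters the second nonlinearity, I will work in the space $X=C^1([0,r_0])^2$ and use the closed set
\begin{equation*}
 \{|u-u_0|\le u_0/2,\ |\tau-\tau_0|\le 1,\ |u'|+|\tau'|\le 1\}.
\end{equation*}
On this set $u\ge u_0/2$, so $u^q$ and $u^{q-2}$ are smooth and Lipschitz, even when $q-2<0$ for $n\ge 9$.

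The key estimates are
\begin{equation*}
 \Bigl|\frac{d}{dr}\mathcal V[g]\Bigr|=\Bigl|r^{1-n}\int_0^r\sigma^{n-1}g\Bigr|\le \frac rn\|g\|_\infty,
 \qquad |\mathcal V[g]|\le \frac{r^2}{2n}\|g\|_\infty .
\end{equation*}
For small $r_0$ these show that $\mathcal T$ maps the set into itself and is a contraction in the $C^1$ norm. Uniqueness among $C^2$ solutions follows because any such solution satisfies the integral equations.

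For smoothness and evenness, I first bootstrap regularity: each step of the integral formula gains two derivatives. At the origin the operator $\mathcal V$ preserves evenness. Concretely, if $g(r)=G(r^2)$ with $G$ smooth, then $\mathcal V[g]=F(r^2)$ with $F$ smooth, as one sees from the substitution $\sigma=rs$:
\begin{equation*}
 \frac{d}{dr}\mathcal V[g]=r\int_0^1 s^{n-1}g(rs)\dd s .
\end{equation*}
Run the Picard iteration in the class of functions smooth in $r^2$; alternatively, note that the solution must be even by uniqueness, since $r\mapsto(u(-r),\tau(-r))$ also solves the system. With $u,\tau$ smooth functions of $r^2=|x|^2$, they are smooth on $\R^n$, and the radial ODE is exactly the PDE system \eqref{eq:nonadm-system}.

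For continuation away from $0$, the system is a regular ODE with coefficient $(n-1)/r$ smooth for $r>0$. Its right-hand side is locally Lipschitz in $(u,u',\tau,\tau')$ wherever $u>0$, so standard ODE theory gives unique extension. The maximal solution can fail to exist beyond $r^*<\infty$ only if $u\to0$ along some sequence or $(u,\tau,u',\tau')$ becomes unbounded.

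The main obstacle is the singular point $r=0$ together with the appearance of $u'$ in the $\tau$-equation. This is why I work in $C^1$ rather than $C^0$, and why the contraction constant must be derived from the gain of $r/n$ in the derivative of $\mathcal V$.
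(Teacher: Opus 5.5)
Your proposal is correct and follows the paper's proof. The paper likewise recasts the radial system as the Volterra fixed-point problem $u=u_0+\mathcal V[\tau-Ku^q]$, $\tau=\tau_0+\mathcal V[\cN p(p-1)u^{q-2}(u')^2]$, takes a contraction on a small ball in $C^1([0,r_0])^2$ with $u$ bounded away from zero, reads off evenness and smoothness from the integral equations, and uses standard ODE continuation for $r>0$; your explicit bounds on $\mathcal V[g]$ and $(\mathcal V[g])'$ and the substitution $\sigma=rs$ only supply details the paper leaves implicit.
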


\begin{proof}
Writing $G_1=\tau-Ku^q$ and
$G_2=\cN p(p-1)u^{q-2}(u')^2$, the regular solutions are precisely the
fixed points of
\begin{align*}
 u(r)&=u_0+\int_0^r t^{1-n}\int_0^t s^{n-1}G_1(s)\,\dd s\,\dd t,\\
 \tau(r)&=\tau_0+\int_0^r t^{1-n}\int_0^t s^{n-1}G_2(s)\,\dd s\,\dd t.
\end{align*}
On a small ball in $C^1([0,r_0])^2$ with $u$ bounded away from zero, this
Volterra map is a contraction.  The integral equations successively give
even Taylor expansions and smoothness.  Away from $r=0$ the standard ODE
continuation theorem gives the last assertion.
\end{proof}
Now we shall also use the following elementary asymptotic fact for the radial
Helmholtz equation.
\begin{lemma}
\label{lem:radial-Helmholtz-scattering}
Let $k>0$, $d=(n-1)/2$, and suppose
\begin{equation*}
 \rho''+\frac{n-1}{r}\rho'+k^2\rho=G\qquad(r\geq1).
\end{equation*}
If $P=r^d\rho$ and $P,P'$ are bounded, while
$\int_1^\infty r^d|G(r)|\,\dd r<\infty$, then there are $A,B\in\mathbb R$
such that
\begin{equation*}
 P(r)=A\cos(kr)+B\sin(kr)+o(1),
\end{equation*}
with the analogous differentiated expansion.  Moreover, the scattering
vector $(A,B)$ is bounded in terms of $|P(1)|+|P'(1)|$ and the displayed
$L^1$ norm.
\end{lemma}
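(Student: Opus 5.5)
The plan is to remove the damping term with the Liouville substitution and then apply variation of constants against the free Helmholtz oscillator. For $\rho$ on $[1,\infty)$ put $P=r^d\rho$ with $d=(n-1)/2$. A direct computation turns the equation into
\begin{equation*}
 P''+k^2P=r^dG+\frac{d(d-1)}{r^2}P=:F(r).
\end{equation*}
By hypothesis $P$ is bounded, so $r^{-2}P\in L^1(1,\infty)$. Together with $\int_1^\infty r^d|G|\,\dd r<\infty$ this gives $F\in L^1(1,\infty)$, with
\begin{equation*}
 \|F\|_{L^1}\leq\int_1^\infty r^d|G|\,\dd r+d|d-1|\sup_{[1,\infty)}|P|.
\end{equation*}

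The variation of constants formula on $[1,r]$ then reads
\begin{equation*}
 P(r)=P(1)\cos k(r-1)+\frac{P'(1)}{k}\sin k(r-1)+\frac1k\int_1^r\sin k(r-t)\,F(t)\,\dd t .
\end{equation*}
Expanding $\sin k(r-t)=\sin kr\cos kt-\cos kr\sin kt$ gives $P(r)=A(r)\cos kr+B(r)\sin kr$, where $A(r)$ and $B(r)$ consist of explicit constants from the initial data plus the integrals $\mp k^{-1}\int_1^r(\sin kt,\cos kt)F(t)\,\dd t$. Since $F\in L^1$, both integrals converge as $r\to\infty$. Hence $A(r)\to A$ and $B(r)\to B$, and $P(r)=A\cos kr+B\sin kr+o(1)$, because $|A(r)-A|+|B(r)-B|\leq Ck^{-1}\int_r^\infty|F|\to0$.

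The differentiated expansion comes from differentiating the same formula. The terms produced by differentiating $A(r)$ and $B(r)$ cancel, exactly as in the standard Duhamel computation, leaving
\begin{equation*}
 P'(r)=-kA(r)\sin kr+kB(r)\cos kr,
\end{equation*}
so $P'(r)=-kA\sin kr+kB\cos kr+o(1)$.

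For the bound on the scattering vector we read $|A|+|B|\leq C_k\bigl(|P(1)|+|P'(1)|+\|F\|_{L^1}\bigr)$ off the formula. The one wrinkle is that $\|F\|_{L^1}$ contains $\sup|P|$, whereas the statement only allows $|P(1)|+|P'(1)|$ and $\int r^d|G|$. To remove it, note that the same formula gives, for every $r\geq1$,
\begin{equation*}
 |P(r)|\leq C_k\bigl(|P(1)|+|P'(1)|\bigr)+C_k\int_1^\infty t^d|G|\,\dd t+C_k\int_1^r t^{-2}|P(t)|\,\dd t .
\end{equation*}
Gronwall's inequality with the integrable weight $t^{-2}$ then bounds $\sup_{[1,\infty)}|P|$ by $C_{k,n}\bigl(|P(1)|+|P'(1)|+\int_1^\infty r^d|G|\,\dd r\bigr)$. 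Substituting this into the bound for $(A,B)$ gives the stated estimate, with a constant depending only on $k$ and $n$.

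Boundedness of $P$ is what makes the Gronwall step unnecessary for the existence of the expansion. It is still needed for the quantitative bound, and that step is the only one requiring care: it is the point where the $r^{-2}P$ term, coming from the Liouville substitution, must be absorbed.
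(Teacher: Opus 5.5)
Your proposal is correct and follows the paper's proof. You use the same substitution $P=r^d\rho$, giving $P''+k^2P=r^dG+\frac{(n-1)(n-3)}{4r^2}P$ with right-hand side in $L^1(1,\infty)$, and the same variation-of-constants argument; your Gronwall step, which bounds $\sup|P|$ so the estimate on $(A,B)$ depends only on $|P(1)|+|P'(1)|$ and $\int_1^\infty r^d|G|\,dr$, makes explicit a point the paper's one-line argument leaves implicit.
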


\begin{proof}
The transformed equation is
\begin{equation*}
 P''+k^2P=r^dG+\frac{(n-1)(n-3)}{4r^2}P.
\end{equation*}
Its right-hand side belongs to $L^1(1,\infty)$.  Variation of constants gives
convergent sine and cosine coefficients, the stated estimate, and a remainder
bounded by the tail of that $L^1$ function.
\end{proof}

\subsection{Small radial scattering solutions}

The following proposition gives the normalized and quantitative form of
Theorem~\ref{thm:nonadm-scattering}.

\begin{proposition}
\label{prop:nonadm-scattering-precise}
For every $n\geq5$ there exists $\varepsilon_n>0$ such that, for every
$0<|s|<\varepsilon_n$, the initial problem
\eqref{eq:nonadm-radial-u}--\eqref{eq:nonadm-IVP} has a unique global smooth
solution with
\begin{equation*}
 u_s(r)>0\qquad(0\leq r<\infty).
\end{equation*}
There is a number $\ell_s>0$ such that $u_s(r)\to\ell_s$.  More precisely, if
\begin{equation*}
 d=\frac{n-1}{2},\qquad
 k_s^2=\cN p\,\ell_s^{2(p-1)},
\end{equation*}
then there are constants $A_s,B_s$, not both zero, and a function $L_s$ such
that
\begin{align}
 L_s(r)&\longrightarrow\ell_s,\qquad
 L_s(r)-\ell_s=O(r^{3-n}),                                \label{eq:nonadm-L-tail}\\
 u_s(r)&=L_s(r)+r^{-d}
 \{A_s\cos(k_sr)+B_s\sin(k_sr)+o(1)\}.                   \label{eq:nonadm-u-tail}
\end{align}
Furthermore,
\begin{align}
 -\Delta u_s
 &=k_s^2r^{-d}\{A_s\cos(k_sr)+B_s\sin(k_sr)+o(1)\},      \label{eq:nonadm-z-tail}\\
 -\Delta(u_s^p)
 &=p\ell_s^{p-1}k_s^2r^{-d}
 \{A_s\cos(k_sr)+B_s\sin(k_sr)+o(1)\}.                  \label{eq:nonadm-w-tail}
\end{align}
Consequently $-\Delta u_s$ and $-\Delta(u_s^p)$ change sign infinitely many
times.  In particular, $u_s$ is neither admissible nor a standard bubble.
\end{proposition}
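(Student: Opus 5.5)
The plan is to exploit the exact second-order form \eqref{eq:nonadm-system}. The starting observation is that $\tau$ is a monotone slow variable. In radial form, \eqref{eq:nonadm-radial-tau} reads $(r^{n-1}\tau')'=\cN p(p-1)r^{n-1}u^{q-2}(u')^2\geq0$ with $\tau'(0)=0$. Hence $\tau'\geq0$ and
\begin{equation*}
 \tau'(r)=\cN p(p-1)\,r^{1-n}\int_0^r t^{n-1}u^{q-2}(u')^2\,\dd t .
\end{equation*}
I define the slow profile $L=(\tau/K)^{1/q}$, which is well defined while $\tau>0$; note $\tau(0)=K-ns>0$ for small $|s|$. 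I also write $\rho=u-L$. Expanding $\tau-Ku^q=K(L^q-u^q)$ turns \eqref{eq:nonadm-radial-u} into
\begin{equation*}
 \rho''+\frac{n-1}{r}\rho'+k(r)^2\rho=G,\qquad k(r)^2=KqL^{q-1}=\cN pL^{2(p-1)},
\end{equation*}
where $G=-\Delta L+O(\rho^2)$. The initial data are $\rho(0)=1-(1-ns/K)^{1/q}=\frac{ns}{Kq}+O(s^2)$ and $\rho'(0)=0$. So the problem is a radial Helmholtz oscillator with a slowly varying frequency, forced by a quadratic term.

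\textbf{Step 1: bootstrap for global existence and decay.} Fix a large constant $C_0$ and let $R^*$ be the maximal radius on which $|\rho|+|\rho'|\leq C_0|s|(1+r)^{-d}$ and $|\tau-\tau(0)|\leq|s|$; Lemma~\ref{lem:nonadm-local-IVP} lets us continue the solution as long as these hold. On $[0,R^*]$ the formula for $\tau'$ gives $0\leq\tau'\leq Cs^2r^{2-n}$ for $r\geq1$. The equation then gives $|\tau''|\lesssim s^2r^{1-n}$, hence $|\Delta L|\lesssim s^2r^{1-n}$ and $|(k^2)'|\lesssim s^2r^{2-n}$. The forcing therefore satisfies $r^d|G|\lesssim s^2(r^{d+1-n}+r^{-d})$, which is integrable for $n\geq5$.

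To improve the bound on $\rho$, I set $P=r^d\rho$. On $[0,1]$ I use Gronwall for the regular problem. For $r\geq1$ I use the modified energy $E=(P')^2+k(r)^2P^2$. Its derivative is controlled by $|(k^2)'|P^2$, by $r^{-2}|PP'|$, and by $|r^dG|\,|P'|$, all of which are integrable. This yields $E\leq C s^2$ with $C$ independent of $C_0$, so for $C_0$ large and $|s|$ small the bootstrap bounds are strictly improved. Hence $R^*=\infty$, and $u=L+\rho>0$ everywhere. Moreover $\tau\uparrow\tau_\infty$ with $\tau_\infty-\tau=O(s^2r^{3-n})$. Setting $\ell_s=(\tau_\infty/K)^{1/q}$ gives \eqref{eq:nonadm-L-tail}, and $k(r)\to k_s$.

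\textbf{Step 2: asymptotics.} I rewrite the equation with the constant frequency $k_s$, moving $(k_s^2-k(r)^2)\rho=O(s^3r^{3-n-d})$ into $G$. Lemma~\ref{lem:radial-Helmholtz-scattering} then gives \eqref{eq:nonadm-u-tail}. For \eqref{eq:nonadm-z-tail} I compute
\begin{equation*}
 -\Delta u=K(u^q-L^q)=k^2\rho+O(\rho^2)=k_s^2\rho+o(r^{-d}).
\end{equation*}
For \eqref{eq:nonadm-w-tail} I use $-\Delta(u^p)=pu^{p-1}z-p(p-1)u^{p-2}(u')^2$; the gradient term is $O(r^{-2d})=o(r^{-d})$, and $u^{p-1}\to\ell_s^{p-1}$.

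\textbf{Step 3: nonvanishing amplitude, the main obstacle.} I compare $\rho$ with the regular linear mode with the same initial value,
\begin{equation*}
 \rho_{\mathrm{lin}}(r)=\rho(0)\,\Gamma(n/2)\Bigl(\tfrac{2}{k_0r}\Bigr)^{(n-2)/2}J_{(n-2)/2}(k_0r),
\end{equation*}
where $k_0^2=\cN p$ is the frequency at the constant background $u\equiv1$. Bessel asymptotics give $\rho_{\mathrm{lin}}$ a scattering vector $\rho(0)(a_n,b_n)$ with $(a_n,b_n)\neq0$. The difference $\rho-\rho_{\mathrm{lin}}$ solves the same oscillator, forced by $G$ together with $(k_0^2-k^2)\rho$, whose weighted $L^1$ norm is $O(s^2)$. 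Carrying this out requires uniform control of the frequency shift $k(r)-k_0=O(s)$ over the whole half-line, not just a small interval. On $[0,r_1]$ I handle it by continuous dependence on $s$. On $[r_1,\infty)$ I use the bound in Lemma~\ref{lem:radial-Helmholtz-scattering}, together with the fact that the phase drift caused by a frequency error of size $O(s)$ only rotates the amplitude vector and does not shrink it.

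The outcome should be $(A_s,B_s)=\rho(0)R_s(a_n,b_n)+O(s^2)$, where $R_s$ is a rotation. Since $|\rho(0)|\sim|s|$, this gives $|(A_s,B_s)|\geq c|s|$ for small $s\neq0$. Consequently $A_s\cos(k_sr)+B_s\sin(k_sr)$ is a nonzero sinusoid, so by \eqref{eq:nonadm-z-tail} and \eqref{eq:nonadm-w-tail} both $-\Delta u_s$ and $-\Delta(u_s^p)$ change sign infinitely often. In particular $u_s$ is neither weakly superharmonic nor admissible, and it is not a bubble, since bubbles tend to $0$ while $u_s\to\ell_s>0$.
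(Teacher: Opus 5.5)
Your Steps 1 and 2 follow the paper's route: the slow variable $L=(\tau/K)^{1/q}$, the oscillator for $u-L$, a bootstrap closed by the energy $\frac12(P')^2+\frac12k(r)^2P^2$, and Lemma~\ref{lem:radial-Helmholtz-scattering} applied at the limiting frequency $k_s$. One small point: your flux bound for $\tau'$ involves $(u')^2\ni(L')^2\propto(\tau')^2$, so $\tau'$ must itself be bootstrapped, as the paper does with $M_\tau$.

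The genuine gap is Step 3. You compare with the Bessel mode at the background frequency $k_0^2=\cN p$, but the frequency shift does not decay. Indeed $k(r)^2\to k_s^2$ and
\begin{equation*}
 k_0^2-k_s^2=(q-1)ns+O(s^2)=\frac{4n}{n-4}\,s+O(s^2)\neq0 .
\end{equation*}
Hence $(k_0^2-k(r)^2)\rho$ has exact size $|s|\cdot|s|r^{-d}$ at infinity, and $\int_1^\infty r^d|(k_0^2-k^2)\rho|\dd r=\infty$. Your claim that the forcing of $\rho-\rho_{\mathrm{lin}}$ has weighted $L^1$ norm $O(s^2)$ is therefore false, and Lemma~\ref{lem:radial-Helmholtz-scattering} cannot be applied to the difference. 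In fact $r^d(\rho-\rho_{\mathrm{lin}})$ superposes oscillations at frequencies $k_s\neq k_0$. These dephase by $O(1)$ once $r\gtrsim1/|s|$, so the difference is of size $|s|$, not $s^2$, and has no scattering vector in a single basis. The remark that phase drift ``only rotates the amplitude vector'' is precisely what has to be proved. The formula $(A_s,B_s)=\rho(0)R_s(a_n,b_n)+O(s^2)$ is asserted, not derived.

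The missing observation is that the $O(s)$ part of the frequency shift is frozen in at $r=0$. Since $\tau'=O(s^2)$ globally, $|k(r)^2-k_s^2|\le Cs^2\min\{1,r^{3-n}\}$. So compare with $e(0)\phi_{k_s^2}$, the regular mode at the \emph{limiting} frequency, as the paper does. The difference then has zero data and forcing $(k_s^2-k^2)\rho+G$, whose weighted $L^1$ norm is $O(s^2)$ (the frequency part is even $O(s^3)$). Its scattering vector is therefore $O(s^2)$, and
\begin{equation*}
 (A_s,B_s)=e(0)\,\mathcal S(k_s^2)+O(s^2),\qquad e(0)=\frac{n}{\cN p}\,s+O(s^2),
\end{equation*}
where $\lambda\mapsto\mathcal S(\lambda)$ is explicit from the Bessel asymptotics, continuous, and nonvanishing near $\cN p$.

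Alternatively, a rigorous version of your rotation intuition runs through the energy. It satisfies $|\mathcal E'|\le C(|(k^2)'|+r^{-2})\mathcal E+r^d|G|\sqrt{2\mathcal E}$, so on $[1,\infty)$ it can drop by at most a fixed factor, up to an $O(s^2)$ error. Your comparison at $k_0$ is harmless on the compact interval $[0,1]$ and gives $\mathcal E(1)\ge cs^2$. Since $\mathcal E(r)\to\frac12k_s^2(A_s^2+B_s^2)$, this yields $|(A_s,B_s)|\ge c'|s|$.
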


\begin{proof}
Local existence and uniqueness follow from
Lemma~\ref{lem:nonadm-local-IVP}.  Only the uniform global estimate requires
work.  Once it is established, the
asymptotic formulas follow from standard one-dimensional asymptotic integration;
we include the details needed to verify all decay exponents.

For $|s|$ small, $\tau(0)=K-ns$ is positive.  As long as $\tau>0$, set
\begin{equation*}
 L=\left(\frac{\tau}{K}\right)^{1/q},\qquad e=u-L,\qquad
 k(r)^2=KqL^{q-1}.
\end{equation*}
Since $KL^q=\tau$, equations \eqref{eq:nonadm-radial-u} and
\eqref{eq:nonadm-radial-tau} give
\begin{equation*}
 e''+\frac{n-1}{r}e'+k(r)^2e=f,
\end{equation*}
where
\begin{equation*}
 f=-K\{(L+e)^q-L^q-qL^{q-1}e\}
   -\left(L''+\frac{n-1}{r}L'\right).
\end{equation*}
Since $L=(\tau/K)^{1/q}$, the chain rule and
\eqref{eq:nonadm-radial-tau} yield
\begin{equation*}
 L''+\frac{n-1}{r}L'
 =\frac{L}{q\tau}\,\cN p(p-1)u^{q-2}(u')^2
 -\frac{q-1}{q^2}\frac{L}{\tau^2}(\tau')^2.
\end{equation*}
Thus, whenever $u,L$ stay in a fixed compact subinterval of $(0,\infty)$,
\begin{equation}
 |f|\leq C_0\{e^2+(e')^2+(\tau')^2\}.                    \label{eq:nonadm-f-bound}
\end{equation}

We now close a continuation estimate without using the same bootstrap constant
at the linear and quadratic scales.  Put $\delta=|s|$.  Let $M_e,M_\tau>1$ be
constants to be chosen in that order.  On a maximal interval $[0,R]$, assume
\begin{align}
 |e(r)|+|e'(r)|&\leq M_e\delta(1+r)^{-d},                  \label{eq:nonadm-bootstrap-e}\\
 |\tau'(r)|&\leq M_\tau\delta^2(1+r)^{2-n},               \label{eq:nonadm-bootstrap-tau}
\end{align}
and $1/2\leq u,L\leq3/2$.  Radial integration of
\eqref{eq:nonadm-radial-tau} gives
\begin{equation}
 \tau'(r)=\cN p(p-1)r^{1-n}\int_0^r
 t^{n-1}u(t)^{q-2}u'(t)^2\,dt.                            \label{eq:nonadm-tau-flux}
\end{equation}
Because $u'=e'+L\tau'/(q\tau)$ and $2d=n-1$, the bootstrap assumptions imply
\begin{equation}
 |\tau'(r)|\leq C_1\bigl(M_e^2\delta^2+M_\tau^2\delta^4\bigr)
 \begin{cases}
 r,&0\leq r\leq1,\\
 r^{2-n},&r\geq1.
 \end{cases}                                               \label{eq:nonadm-tau-improve}
\end{equation}
In particular,
\begin{equation}
 \int_0^R|\tau'(r)|\,dr
 \leq C_2\bigl(M_e^2\delta^2+M_\tau^2\delta^4\bigr).    \label{eq:nonadm-tau-L1}
\end{equation}
Together with $\tau(0)=K-ns$, this keeps $\tau$, and hence $L$, uniformly close
to one once $\delta$ is small.  It also gives
$|(k^2)'|\leq C|\tau'|$.

To improve \eqref{eq:nonadm-bootstrap-e}, put $E=r^de$.  For $r\geq1$,
\begin{equation}
 E''+\left(k(r)^2-\frac{(n-1)(n-3)}{4r^2}\right)E=r^df.   \label{eq:nonadm-E}
\end{equation}
By \eqref{eq:nonadm-f-bound} and the bootstrap estimates,
\begin{equation*}
 \int_1^R r^d|f(r)|\,dr
 \leq C_3\bigl(M_e^2\delta^2+M_\tau^2\delta^4\bigr)
 \int_1^\infty r^{-d}\,dr
 \leq C_4\bigl(M_e^2\delta^2+M_\tau^2\delta^4\bigr),
\end{equation*}
where $d\geq2$ is exactly where $n\geq5$ is used.  The energy
\begin{equation*}
 \mathcal E(r)=\frac12(E')^2+\frac12k(r)^2E^2
\end{equation*}
obeys
\begin{align*}
 |\mathcal E'|
 &\leq C\bigl(|(k^2)'|+r^{-2}\bigr)\mathcal E
      +r^d|f|\sqrt{2\mathcal E}.
\end{align*}
The coefficient of $\mathcal E$ is integrable uniformly in $R$.  Gronwall's
inequality and \eqref{eq:nonadm-tau-L1} therefore give
\begin{equation}
 \sup_{1\leq r\leq R}(|E(r)|+|E'(r)|)
 \leq C_5\delta
 +C_6\bigl(M_e^2\delta^2+M_\tau^2\delta^4\bigr).         \label{eq:nonadm-E-bound}
\end{equation}
The regular ODE estimate on $[0,1]$ supplies the initial term $C_5|s|$.
Choose first $M_e>4C_5$, then
$M_\tau>4C_1M_e^2$.  Finally choose $\delta>0$ so small that
\begin{align*}
 C_1M_\tau^2\delta^2<\frac14M_\tau,\qquad
 C_6\bigl(M_e^2\delta+M_\tau^2\delta^3\bigr)<\frac14M_e,
\end{align*}
and so that the total variation in \eqref{eq:nonadm-tau-L1}, together with
$|\tau(0)-K|=n\delta$, is smaller than a fixed neighborhood of $K$.
Then \eqref{eq:nonadm-tau-improve} improves
\eqref{eq:nonadm-bootstrap-tau}, while
\eqref{eq:nonadm-E-bound} and the regular estimate on $[0,1]$ improve
\eqref{eq:nonadm-bootstrap-e}.  They also preserve, with room to spare,
$1/2<u,L<3/2$.  The usual open--closed continuation gives the estimates for
all $r\geq0$.  The variables $u,\tau,u',\tau'$ remain in a compact region on
every finite interval, so the radial ODE continuation criterion rules out
finite-radius blow-up.  In particular $u_s$ remains positive globally.

Equations \eqref{eq:nonadm-tau-improve}--\eqref{eq:nonadm-tau-L1} show that
$\tau(r)\to\tau_\infty>0$.  Put
\begin{equation*}
 \ell_s=(\tau_\infty/K)^{1/q},\qquad
 L_s=(\tau/K)^{1/q}.
\end{equation*}
Then
\begin{align*}
 |L_s(r)-\ell_s|\leq C\int_r^\infty t^{2-n}\,dt=O(r^{3-n}),
\end{align*}
which proves \eqref{eq:nonadm-L-tail}.  Moreover,
$k(r)^2-k_s^2=O(r^{3-n})\in L^1(1,\infty)$ and
$r^df\in L^1(1,\infty)$.  With
\begin{equation*}
 g_s=r^df+\left(k_s^2-k(r)^2
 +\frac{(n-1)(n-3)}{4r^2}\right)E,
\end{equation*}
equation \eqref{eq:nonadm-E} is $E''+k_s^2E=g_s$ and $g_s\in L^1(1,\infty)$.
Lemma~\ref{lem:radial-Helmholtz-scattering}, equivalently its
variation-of-constants proof applied directly to this equation, shows that
the sine and cosine coefficients have finite limits and that the remaining
tail is bounded by $C\int_r^\infty|g_s(t)|\,dt=o(1)$.  Hence
\begin{equation*}
 E(r)=A_s\cos(k_sr)+B_s\sin(k_sr)+o(1),
\end{equation*}
and the corresponding formula for $E'$.  This is
\eqref{eq:nonadm-u-tail}.

It remains to prove that the scattering vector is nonzero for small $s\ne0$.
Let $\nu=(n-2)/2$ and, for $\lambda>0$, define the regular radial Helmholtz mode
\begin{equation*}
 \phi_\lambda(r)=2^\nu\Gamma(\nu+1)(\sqrt\lambda r)^{-\nu}
 J_\nu(\sqrt\lambda r).
\end{equation*}
It satisfies $\phi_\lambda(0)=1$, $\phi_\lambda'(0)=0$ and
$\phi_\lambda''(0)=-\lambda/n$.  Its $r^{-d}$ scattering vector is nonzero
and depends smoothly on $\lambda$ in compact subsets of $(0,\infty)$.
Indeed, the standard Bessel expansion gives
\begin{align*}
 \phi_\lambda(r)
 =c_\nu\lambda^{-(2\nu+1)/4}r^{-d}
 \cos\left(\sqrt\lambda r-\frac{\pi\nu}{2}-\frac\pi4\right)+O(r^{-d-1}),\qquad c_\nu\ne0.
\end{align*}
There is a direct quadratic comparison which avoids assuming differentiability
of scattering data on an infinite interval.  Since $\tau'$ is quadratic in
$s$, \eqref{eq:nonadm-tau-improve} gives, uniformly in $r$,
\begin{equation}
 |\tau(r)-\tau_\infty|\leq C\delta^2,
 \qquad
 |\tau(r)-\tau_\infty|\leq C\delta^2r^{3-n}\quad(r\geq1).
 \label{eq:nonadm-tau-tail-small}
\end{equation}
Consequently
\begin{align*}
 |k(r)^2-k_s^2|&\leq C\delta^2,\\
 |k(r)^2-k_s^2|&\leq C\delta^2r^{3-n}\quad(r\geq1).
\end{align*}
Also
\begin{equation*}
 e(0)=1-\left(1-\frac nK s\right)^{1/q}
 =\frac n{\cN p}s+O(s^2),
 \qquad
 \ell_s=1-\frac n{\cN p}s+O(s^2),
\end{equation*}
where $qK=\cN p$ and the second expansion follows from
$\tau_\infty=K-ns+O(s^2)$.

Set $\rho_s=e-e(0)\phi_{k_s^2}$.  Then
\begin{equation*}
 \rho_s''+\frac{n-1}{r}\rho_s'+k_s^2\rho_s=G_s,
 \qquad \rho_s(0)=\rho_s'(0)=0,
\end{equation*}
with
\begin{equation*}
 G_s=(k_s^2-k(r)^2)e+f.
\end{equation*}
The estimates already proved, together with
\eqref{eq:nonadm-tau-tail-small}, imply
\begin{equation*}
 \|G_s\|_{L^\infty(0,1)}
 +\int_1^\infty r^d|G_s(r)|\,dr\leq C\delta^2.
\end{equation*}
The regular ODE estimate on $[0,1]$, followed by the transformed energy
estimate used for \eqref{eq:nonadm-E-bound} and
Lemma~\ref{lem:radial-Helmholtz-scattering}, shows that the scattering vector
of $\rho_s$ has norm at most $C\delta^2$.  If
$\mathcal S(\lambda)\ne0$ denotes the scattering vector of
$\phi_\lambda$, then therefore
\begin{equation*}
 (A_s,B_s)=e(0)\mathcal S(k_s^2)+O(s^2)
 =\frac n{\cN p}s\,\mathcal S(\cN p)+O(s^2).
\end{equation*}
Since $\mathcal S(\cN p)\ne0$, this proves $(A_s,B_s)\ne(0,0)$ for all
sufficiently small $s\ne0$.

Finally, from $z=Ku^q-\tau=K\{(L+e)^q-L^q\}$ we obtain
\eqref{eq:nonadm-z-tail}.  The identity
\begin{equation*}
 -\Delta(u^p)=p u^{p-1}
 \left(z-(p-1)\frac{|\nabla u|^2}{u}\right)
\end{equation*}
and $|u'|^2=O(r^{-2d})=o(r^{-d})$ give
\eqref{eq:nonadm-w-tail}.
\end{proof}

\begin{proof}[Proof of Theorem~\ref{thm:nonadm-scattering}]
The normalized conclusion is Proposition
\ref{prop:nonadm-scattering-precise}.  If $u$ is a solution, then
$x\mapsto c u(c^{p-1}x)$ is also a solution for every $c>0$.  Applying this
scaling to the normalized family produces the branch near the constant
background $c$.
\end{proof}

\subsection{A periodic rigidity consequence}
Although the scattering solutions oscillate indefinitely, they are not
periodic.  In fact, we have the following stronger rigidity statement.
\begin{proposition}
There is no positive nonconstant radial solution of
\eqref{eq:main} whose radial profile is periodic.
\end{proposition}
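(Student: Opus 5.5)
The natural tool is the exact second-order reduction \eqref{eq:nonadm-system}, whose second equation is monotone. Suppose $u$ is a positive nonconstant radial solution with periodic profile $u(r+P)=u(r)$ for some $P>0$. Then $u$ and $u'$ are bounded, and $u\ge c>0$ by continuity and periodicity. Set $\tau=Ku^q+\Delta u=Ku^q-z$. This is the same $\tau=-T$ used in Section~\ref{sec:nonadmissible}, and it satisfies \eqref{eq:nonadm-radial-tau}. Integrating radially as in \eqref{eq:nonadm-tau-flux} gives
\begin{equation*}
 r^{n-1}\tau'(r)=\cN p(p-1)\int_0^r t^{n-1}u(t)^{q-2}u'(t)^2\,\dd t,
\end{equation*}
so $r^{n-1}\tau'$ is nondecreasing and nonnegative.

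The plan is to compare growth rates on both sides of this identity. On the right, $u^{q-2}$ is bounded below by a positive constant, since $u$ lies between two positive constants; this holds whatever the sign of $q-2$. Because $u$ is nonconstant and periodic, $\int_{jP}^{(j+1)P}u'(t)^2\,\dd t=\gamma>0$ for every $j$. Summing over periods gives a lower bound of order $r^{n}$ for the integral, hence $\tau'(r)\ge c_0 r$ for large $r$ with $c_0>0$, and therefore $\tau(r)\ge \tfrac{c_0}{2}r^2-C$. On the other side, write $\tau=Ku^q+u''+\frac{n-1}{r}u'$. Now $u^q$ is bounded and $u'$ is bounded, but $u''$ need not obviously be bounded. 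So I would rewrite $u''+\frac{n-1}r u'=r^{1-n}(r^{n-1}u')'$ and average over a period. Integrating $r^{n-1}(\tau-Ku^q)=(r^{n-1}u')'$ over $[jP,(j+1)P]$ gives
\begin{equation*}
 \int_{jP}^{(j+1)P}r^{n-1}(\tau-Ku^q)\,\dd r=\bigl[r^{n-1}u'\bigr]_{jP}^{(j+1)P}=O(j^{n-1}),
\end{equation*}
using periodicity of $u'$ and $(j+1)^{n-1}-j^{n-1}=O(j^{n-2})$, so the bound $O(j^{n-1})$ is generous. The left side, however, is at least $c\,j^{n-1}\cdot j^2$ for large $j$, because $\tau\gtrsim r^2$ and $Ku^q$ is bounded. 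This is a contradiction.

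A simpler variant is also available. The periodic function $\Delta u=\tau-Ku^q$ must be bounded, since $u\in C^4$ and its derivatives along the profile are periodic, at least for $r$ away from $0$ where $\frac{n-1}{r}u'\to0$. A bounded $\Delta u$ is incompatible with $\tau\to\infty$. I would present whichever is cleaner, probably the periodic boundedness of $u''$ directly: $u''(r+P)=u''(r)$ is continuous and periodic, hence bounded.

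The main point to check is the strict positivity $\gamma>0$, which is just nonconstancy, and the lower bound on $u^{q-2}$ when $q<2$, i.e.\ $n\ge9$. Both follow from $\min u>0$, which holds because a continuous periodic positive profile attains a positive minimum. No other step poses a real obstacle.
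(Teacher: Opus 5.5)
Your proof is correct, but it closes the argument differently from the paper. Both proofs start from the same flux identity $r^{n-1}\tau'(r)=\cN p(p-1)\int_0^r t^{n-1}u^{q-2}u'^2\,\dd t$, equivalently $T_r=-\tau'\le0$.

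The paper uses only the \emph{sign} of this flux. Periodicity of $u,u',u''$ gives the exact shift identity $T(r+P)-T(r)=\frac{(n-1)P}{r(r+P)}\,u'(r)$. Since $T$ is nonincreasing, this forces $u'\le0$ on $(0,\infty)$, so $u$ is monotone and periodic, hence constant.

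You use the \emph{size} of the flux. Nonconstancy and periodicity give $\int_{jP}^{(j+1)P}u'^2=\gamma>0$, and the positive minimum of $u$ bounds $u^{q-2}$ from below in every dimension. Hence $r^{n-1}\tau'\gtrsim r^n$ and $\tau\gtrsim r^2$. This contradicts the boundedness of $\tau=Ku^q+u''+\frac{n-1}{r}u'$ on $[1,\infty)$, which follows from continuity and periodicity of $u,u',u''$. Your ``simpler variant'' is enough here; the period-averaging computation is also correct but not needed.

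The paper's route is shorter and purely algebraic: it needs neither a count over periods nor a quantitative lower bound on $u$. However, it is tied to exact periodicity. Yours is a growth-versus-boundedness argument that uses periodicity only to bound $\tau$ from above and the oscillation energy from below, so it is more robust. Since $\tau$ is nondecreasing, it shows more generally that any positive radial solution with $u$ pinched between two positive constants and $\tau$ bounded above must satisfy $\int_1^\infty r^{1-n}\int_0^r t^{n-1}u'(t)^2\,\dd t\,\dd r<\infty$. Thus oscillations whose energy does not decay on average are excluded, which is consistent with the $r^{-(n-1)/2}$ decay of the oscillations in the scattering family of the preceding subsection.
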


\begin{proof}
For a radial solution,
\begin{equation*}
 -\left(r^{n-1}T_r\right)_r=\cN p(p-1)r^{n-1}u^{q-2}u_r^2
 \geq0.
\end{equation*}
Since smoothness at the origin gives
\(\lim_{r\downarrow0}r^{n-1}T_r(r)=0\), it follows that
\begin{equation*}
 T_r(r)\leq0
 \qquad (r>0).
\end{equation*}

Suppose that \(P>0\) is a period of \(u\).  Since
\begin{equation*}
 T(r)=-u_{rr}(r)-\frac{n-1}{r}u_r(r)-Ku(r)^q,
\end{equation*}
the periodicity of \(u\) and its derivatives gives, for every \(r>0\),
\begin{equation*}
 T(r+P)-T(r)=\frac{(n-1)P}{r(r+P)}u_r(r).
\end{equation*}
The left-hand side is nonpositive because \(T_r\leq0\); hence
\(u_r(r)\leq0\) for every \(r>0\).  Thus \(u\) is both nonincreasing and
periodic, and must therefore be constant.
\end{proof}
\medskip
\noindent\textbf{Declaration of AI-assisted technologies.}\enspace
The central ideas of this work were conceived by the author. OpenAI's ChatGPT
was used to assist with language editing, exposition, and checking calculations
and logical consistency. All mathematical arguments were independently verified
by the author, who takes full responsibility for the content of the paper.

\end{document}